\numberwithin{equation}{section}
\newtheorem{theorem}{Theorem}[section]
\newtheorem{lemma}[theorem]{Lemma}
\newtheorem{proposition}[theorem]{Proposition}
\newtheorem{corollary}[theorem]{Corollary}
\newtheorem*{theorem*}{Conjecture}
\newtheoremstyle{remarkstyle}
{}{}{}{ }{\bfseries}{.}{ }{\thmname{#1}\thmnumber{ #2}\thmnote{ (#3)}}
\theoremstyle{remarkstyle}
\newtheorem{remark}{Remark}[section]
\newtheorem{definition}{Definition}[section]
\def\({\left(}
\def\){\right)}
\def\<{\left\langle}
\def\>{\right\rangle}
\newcommand{\De}{\slashed{\nabla}}
\newcommand{\Z}{\mathbb Z}
\newcommand{\R}{\mathbb R}
\newcommand{\C}{\mathbb C}
\newcommand{\Ac}{\mathcal A}
\newcommand{\Sc}{\mathcal S}
\newcommand{\Gc}{\mathcal G}
\newcommand{\Oc}{\mathcal O}
\newcommand{\Mcal}{\mathcal M}
\newcommand{\vareps}{\varepsilon}
\DeclareMathOperator*{\loc}{loc}
\DeclareMathOperator*{\opt}{opt}
\DeclareMathOperator*{\sd}{sd}
\DeclareMathOperator*{\rea}{Re}
\DeclareMathOperator*{\ima}{Im}
\DeclareMathOperator{\RE}{Re}
\DeclareMathOperator{\IM}{Im}
\title[NLS system in $\chi^3$ media]
{Sharp conditions for scattering and blow-up for a system of NLS arising in optical materials with $\chi^3$ nonlinear response}
\author[A. H. Ardila]{Alex H. Ardila}
\address[A. H. Ardila]{Universidade Federal de Minas Gerais\\ ICEx-UFMG\\ CEP 30123-970\\ MG, Brazil} 
\email{ardila@impa.br}
\author[V. D. Dinh]{Van Duong Dinh}
\address[V. D. Dinh]{Laboratoire Paul Painlev\'e UMR 8524, Universit\'e de Lille CNRS, 59655 Villeneuve d'Asc, France
and 
Department of Mathematics, HCMC University of Pedagogy, 280 An Duong Vuong, Ho Chi Minh, Vietnam}
\email{contact@duongdinh.com}
\author[L. Forcella]{Luigi Forcella}
\address[L. Forcella]{Department of Mathematics, Heriot-Watt University and The Maxwell Institute for the Mathematical Sciences, Edinburgh, EH14 4AS, United Kingdom}
\email{l.forcella@hw.ac.uk}
\subjclass[2010]{35B44; 35Q55}
\keywords{Nonlinear Schr\"odinger systems, Cubic-type interactions, Scattering, Blow-up, Morawetz estimates}
\begin{document}
	
\begin{abstract}
We study the asymptotic dynamics  for solutions to a system of nonlinear  Schr\"odinger equations with cubic interactions, arising in nonlinear optics. We provide sharp threshold criteria leading to global well-posedness and scattering of solutions, as well as formation of singularities in finite time for (anisotropic) symmetric initial data. The free asymptotic results are proved by means of Morawetz and interaction Morawetz estimates. The blow-up results are shown by combining variational analysis and an ODE argument, which overcomes the unavailability of the convexity argument based on virial-type identities. 

\end{abstract}

\maketitle
	
\section{Introduction}
\label{Intro}
\setcounter{equation}{0}

In this paper, we consider the Cauchy problem for the following system of nonlinear Schr\"odinger equations with cubic interaction 
\begin{equation}\label{SNLS}
\left\{
%\begin{array}{ccc}
\begin{aligned}
i\partial_t u + \Delta u - u &= -\left(\dfrac{1}{9} |u|^2 + 2|v|^2 \right) u - \dfrac{1}{3} \overline{u}^2 v, \\
i \gamma \partial_t v +  \Delta v - \mu v &= -\left( 9 |v|^2 + 2 |u|^2\right) v - \dfrac{1}{9} u^3, 
%\end{array}
\end{aligned}
\right.
\end{equation}
with initial datum
$\left.(u, v)\right|_{t=0} =(u_0,v_0).$ Here $u, v: \R \times \R^3 \rightarrow \C$, $u_0,v_0: \R^3 \rightarrow \C$, and the parameters $\gamma, \mu$ are strictly positive real numbers.

The system \eqref{SNLS} is the dimensionless form of a system of nonlinear Schr\"odinger equations as derived in \cite{SBK-OL} (see also \cite{SBK-JOSA}), where the interaction between an optical beam at some fundamental frequency and its third harmonic is investigated. More precisely, from a physical point of view, \eqref{SNLS} models the interplay of an optical monochromatic beam with its third harmonic in a Kerr-type medium (we refer to \cite{OP} for the latter terminology, as well as for a sketch of the derivation of \eqref{SNLS}).

Models such as in \eqref{SNLS} arise in nonlinear optics in the context of the so-called cascading nonlinear processes. These processes can indeed generate effective higher-order nonlinearities, and they stimulated the study of spatial solitary waves in  optical materials with $\chi^2$ or $\chi^3$ susceptibilities (or nonlinear response, equivalently). 

Let us mention, following \cite{CdMS}, the difference between $\chi^2$ (quadratic) and $\chi^3$ (cubic) media. The contrast basically reflects the order of expansion (in terms of the electric field) of the polarization vector, when decomposing the electrical induction field appearing in the Maxwell equations as the sum of the electric field $\mathbb E$ and the polarization vector $\mathbb P$. Indeed, for ``small'' intensities of the electric field, the polarization response is linear, while for ``large'' intensities of $\mathbb E,$  the vector $\mathbb P$ has a non-negligible nonlinear component, denoted by $\mathbb P_{nl}$. Thus, when considering the Taylor expansion for $\mathbb P_{nl}$, one gets the presence of (at least) quadratic and cubic terms whose coefficients $\chi^j$, which depend on the frequency of the electric field $\mathbb E,$ are called $j$-th optical susceptibility. For $j=2,3,$ they are usually denoted by $\chi^2$ and $\chi^3$.  Therefore quadratic media arise from approximation of the type $\mathbb P_{nl}\sim \chi^2\mathbb E^2,$ and similarly one can define cubic media. The so-called non-centrosymmetric crystals are typical examples of $\chi^2$ materials. Moreover, it can be shown, see \cite{Fib}, that isotropic materials have $\chi^{2n}=0$ susceptibility, namely even orders of nonlinear responses are zero. In the latter case, the leading-order in the expansion of $\mathbb P_{nl}$ is cubic, and these kind of isotropic materials are called Kerr-materials. See the monographs \cite{Fib,SS, Boy} for more discussions. In addition, we refer to \cite{AP, BDST, B99, CdMS, Kivshar, LGT, SBK-OL, SBK-JOSA,ZZS}, and references therein, for more insights on physical motivations and physical results (both theoretical and numerical) about \eqref{SNLS} and other NLS systems with cubic and quadratic interactions. Models as in \eqref{SNLS} are therefore physically relevant, and they deserve a rigorous mathematical investigation. In particular, we are interested in qualitative properties of solutions to \eqref{SNLS}.\\

Our main goal is to understand the asymptotic dynamics of solutions to \eqref{SNLS}, by establishing conditions ensuring global existence and their long time behavior, or leading to formation of singularities in finite time. \\
Let us mention since now on, that once the Strichartz machinery has  been established, and this is nowadays classical, local well-posedness of \eqref{SNLS} at the energy regularity level (i.e. $H^1(\R^3),$ mathematically speaking) is relatively straightforward to get (see below for a precise definition of the functional space to employ a fixed point argument).

The dynamics of solution of NLS-type equation is intimately related to the existence of ground states (see below for a more precise definition). The analysis of solitons is a very important physical problem, and the main difference between $\chi^2$ media and $\chi^3$ media, is that, in the latter case, the cubic nonlinearity is $L^2$ supercritical, while in the former quadratic nonlinearities are $L^2$ subcritical. The last two regimes dramatically reflect the possibility for the problem to be globally well-posed, and the stability/instability properties of the solitons are different. See \cite{CdMS} for further discussions, and a rigorous analysis for solitons in quadratic media. \\

Regarding system \eqref{SNLS}, existence of ground states and their instability properties were established in a recent paper by Oliveira and Pastor, see \cite{OP}. Our aim is to push forward their achievements to obtain a qualitative description of solutions to \eqref{SNLS}, by giving sharp thresholds, defined by means of quantities linked to the ground state, are sufficient to  guarantee a linear asymptotic dynamics for large time (i.e. scattering) or  finite time blow-up of the solutions.\\

Let us start our rigorous mathematical discussion about \eqref{SNLS}.
The existence of solutions is quite simple to obtain. As said above, it is well-known that \eqref{SNLS} is locally well-posed in $H^1(\R^3) \times H^1(\R^3),$ (see e.g., \cite{Cazenave}). More precisely, for $(u_0,v_0) \in H^1(\R^3)\times H^1(\R^3)$, there exist $T_{\pm}>0$ and a unique solution $(u,v)\in X((-T_{-},T_{+})) \times X((-T_{-},T_{+}))$, where
\[
X((-T_{-},T_{+})):= C((-T_{-},T_{+}), H^1(\R^3)) \cap L^q_{\loc}((-T_{-}, T_{+}), W^{1,r}(\R^3))
\]
for any Strichartz $L^2$-admissible pair $(q,r)$, i.e., $\frac{2}{q}+\frac{3}{r}=\frac{3}{2},$ for  $2\leq r \leq 6.$ See Section \ref{sec:pre}.
In addition, the maximal times of existence obey the blow-up alternative, i.e., either $T_{+}=\infty$, or $T_{+}<\infty$ and $\lim_{t\nearrow T_{+}} \|(u(t),v(t))\|_{H^1(\R^3)\times H^1(\R^3)} =\infty$, and similarly for $T_{-}$. When $T_{\pm} =\infty$, we call the solution global.
Solutions to \eqref{SNLS} satisfy conservation laws of mass and energy, namely
\begin{align*}
M_{3\gamma}(u(t),v(t)) &= M_{3\gamma}(u_0,v_0), \tag{Mass} \\
E_\mu(u(t),v(t)) &= \frac{1}{2} \left( K(u(t),v(t)) + M_\mu(u(t),v(t)) \right) - P(u(t),v(t)) = E_\mu(u_0,v_0), \tag{Energy}
\end{align*}
where
\begin{align}
M_\mu(f,g) &:= \|f\|^2_{L^2(\R^3)} + \mu \|g\|^2_{L^2(\R^3)}, \label{defi-M-mu} \\
K(f,g) &:= \|\nabla f\|^2_{L^2(\R^3)} + \|\nabla g\|^2_{L^2(\R^3)}, \label{defi-K} \\
P(f,g) &:= \int_{\R^3} \frac{1}{36} |f(x)|^4 + \frac{9}{4} |g(x)|^4 + |f(x)|^2 |g(x)|^2 + \frac{1}{9} \rea \left( \overline{f}^3(x) g(x)\right) dx. \label{defi-P}
\end{align}

\noindent It is worth introducing since now the Pohozaev functional  
\begin{equation}  \label{defi-G}
G(f,g):= K(f,g) - 3 P(f,g),
\end{equation}
and, for later purposes, we rewrite the functionals  $P$ (see \eqref{defi-P}) by means of its density: namely
\[
P(f,g)=\int_{\R^{3}}N(f(x),g(x))dx
\]
where
\begin{equation}\label{defi-N}
N(f(x),g(x)):= \frac{1}{36} |f(x)|^4 + \frac{9}{4} |g(x)|^4 + |f(x)|^2 |g(x)|^2 + \frac{1}{9} \rea\(\overline{f}^3(x) g(x)\).
\end{equation}
The previous conservation laws can be formally proved by usual integration by part, then a rigorous  justification of them can be done by a classical regularization argument, see \cite{Cazenave}.\\

In order to introduce other invariance of the equations, let us give the following definition.
\begin{definition}
We say that the initial-value problem \eqref{SNLS} satisfies the mass-resonance condition provided that $\gamma=3.$
\end{definition} 
\noindent For $\gamma=3,$ \eqref{SNLS} has the Galilean invariance: namely, if $(u,v)$ is a solution to \eqref{SNLS}, then 
\begin{equation}\label{gal-trans}
u_\xi(t,x):= e^{ix\cdot\xi}e^{-t|\xi|^{2}i} u(t, x-2t\xi), \quad v_\xi(t,x):= e^{3ix\cdot\xi}e^{-3t|\xi|^{2}i}v(t, x-2t\xi)),
\quad \xi\in \R^{3},
\end{equation}
is also a solution to \eqref{SNLS} with initial data
$(e^{ix\cdot\xi}u_{0}, e^{3ix\cdot\xi}v_{0}).$

\begin{remark}\label{rmk:mass-res}
Notice that if $\gamma\neq3$, the system \eqref{SNLS} is not invariant under the Galilean transformations as in \eqref{gal-trans}. 
\end{remark}

As, in this paper, we are interested in long time behavior of solutions to \eqref{SNLS}, let us recall the notion of scattering. 
\begin{definition}
We say that a global solution $(u(t), v(t))$ to \eqref{SNLS} scatters in $H^{1}(\R^{3})\times H^{1}(\R^{3})$ if there exists a scattering state $(u_{\pm},v_{\pm})\in H^1(\R^3)\times H^(\R^3)$ such that
\begin{equation}\label{def:scattering}
\lim_{t\to\pm\infty}\|(u(t),v(t))-(\Sc_{1}(t)u_{\pm},\Sc_{2}(t) v_{\pm})\|_{H^{1}(\R^{3})\times H^{1}(\R^{3})}=0,
\end{equation}
where 
\begin{equation}\label{def:propagators}
\Sc_{1}(t)=e^{it (\Delta-1)} \hbox{ \quad and \quad  }\Sc_{2}(t)=e^{i\frac{t }{\gamma}(\Delta-\mu)}
\end{equation}
are linear Schr\"odinger propagators. 
\end{definition}
Note that the set of initial data such that solutions to \eqref{SNLS} satisfy \eqref{def:scattering} is non-empty, as solutions corresponding to small $H^1(\R^3)\times H^1(\R^3)$-data do scatter (see Section \ref{sec:pre}). \\
As already mention above, it is well-known that the dynamics of nonlinear Schr\"odinger-type equations is strongly related to the notion of ground states. Hence, we recall some basic facts about ground state standing waves related to \eqref{SNLS}. By standing waves, we mean solutions to \eqref{SNLS} of the form 
\[
(u(t,x), v(t,x)) = \left(e^{i\omega t} f(x), e^{3 i\omega t} g(x)\right),
\]
where $\omega \in \R$ is a frequency and $(f,g)$ is a real-valued solution to the system of elliptic equations
\begin{align} \label{syst-elli}
\left\{
\begin{aligned}
\Delta f - (\omega+1) f + \left(\frac{1}{9} f^2 + 2g^2 \right) f + \frac{1}{3} f^2 g &=0,\\
\Delta g - (\mu + 3\gamma \omega) g +(9g^2 + 2f^2) g +\frac{1}{9} f^3&=0.
\end{aligned}
\right.	
\end{align}
It was proved by Oliveira and  Pastor, see \cite{OP}, that solutions to \eqref{syst-elli} exist, provided that 
\begin{align} \label{cond-omega}
\omega > -\min \left\{ 1, \frac{\mu}{3\gamma}\right\}.
\end{align}
Moreover, a non-trivial solution $(\phi, \psi)$ to \eqref{syst-elli} is called ground state related to \eqref{syst-elli} if it minimizes the action functional 
\begin{equation}\label{Afu}
S_{\omega, \mu,\gamma} (f,g) := E_\mu(f,g) + \frac{\omega}{2} M_{3\gamma}(f,g),
\end{equation}
over all non-trivial solutions to \eqref{syst-elli}. Under the assumption \eqref{cond-omega}, the set of ground states related to \eqref{syst-elli} denoted by
\[
\Gc(\omega, \mu,\gamma):= \left\{ (\phi,\psi) \in \Ac_{\omega, \mu, \gamma}  \ : \ S_{\omega, \mu,\gamma}(\phi,\psi) \leq S_{\omega,\mu,\gamma}(f,g), \,\forall (f,g) \in \Ac_{\omega, \mu, \gamma}\right\}
\]
is not empty, where $\Ac_{\omega,\mu, \gamma}$ is the set of all non-trivial solutions to \eqref{syst-elli}. In particular, $\Gc(0, 3\gamma, \gamma) \ne \emptyset$.

It was shown (see \cite[Theorem 3.10]{OP}) that if $(u_0,v_0) \in H^1(\R^3) \times H^1(\R^3)$ satisfies
\begin{align} 
E_{\mu}(u_0,v_0) M_{3\gamma}(u_0,v_0) &< \frac{1}{2}E_{3\gamma}(\phi,\psi) M_{3\gamma}(\phi,\psi), \label{cond-ener} \\
K(u_0,v_0) M_{3\gamma}(u_0,v_0) &< K(\phi,\psi) M_{3\gamma}(\phi,\psi), \label{cond-gwp}
\end{align}
where $(\phi,\psi) \in \Gc(0,3\gamma,\gamma)$, then the corresponding solution to \eqref{SNLS} exists globally in time. The proof of this result is based on a continuity argument and the following sharp Gagliardo-Nirenberg inequality
\begin{align} \label{GN-ineq}
P(f,g) \leq C_{\opt} \left(K(f,g)\right)^{\frac{3}{2}} \left(M_{3\gamma}(f,g)\right)^{\frac{1}{2}}, \quad \forall (f,g) \in H^1(\R^3) \times H^1(\R^3).
\end{align}
This type of Gagliardo-Nirenberg inequality was established in \cite[Lemma 3.5]{OP}. Note that in \cite{OP}, this inequality was proved for real-valued $H^1$-functions. However, we can state it for complex-valued $H^1$-functions as well since $P(f,g) \leq P(|f|,|g|)$ and $\|\nabla(|f|)\|_{L^2(\R^3)} \leq \|\nabla f\|_{L^2(\R^3)}$. \\

We are now in position to state our first main result. The following theorem provides sufficient conditions to have scattering of solutions. More precisely, for data belonging to the set  given by conditions \eqref{cond-ener} and \eqref{cond-gwp}, solutions to \eqref{SNLS} satisfy \eqref{def:scattering}, for some scattering state $(u^\pm,v^\pm)$.

\begin{theorem}\label{Th1}
Let $\mu, \gamma>0$, and $(\phi,\psi) \in \Gc(0, 3\gamma, \gamma)$. Let $(u(t),v(t))$ the corresponding solution of \eqref{SNLS} with initial data $(u_{0}, v_{0})\in H^{1}(\R^{3})\times H^{1}(\R^{3})$. Assume that the initial data satisfies \eqref{cond-ener} and \eqref{cond-gwp}.
Provided that
\begin{itemize}[leftmargin=5mm]
\item (non-radial case) either $|\gamma-3|<\eta$ for some $\eta=\eta(E_{3\gamma}((u_{0}, v_{0})), M_{3\gamma}((u_{0}, v_{0})))>0$ small enough,
\item (radial case)  or $(u_{0}, v_{0})$ is radial,
\end{itemize}
then the solution of \eqref{SNLS} is global and scatters in $H^{1}(\R^{3})\times H^{1}(\R^{3})$. 
\end{theorem}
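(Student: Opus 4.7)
The plan is to prove scattering by combining a coercivity estimate derived from the sharp Gagliardo--Nirenberg inequality \eqref{GN-ineq} with a (interaction) Morawetz estimate, following the now-standard route that bypasses concentration-compactness. Since global well-posedness in $H^1\times H^1$ is already guaranteed by \cite[Theorem 3.10]{OP} under \eqref{cond-ener}--\eqref{cond-gwp}, it is enough to produce a finite scattering norm of $(u,v)$ on $\R$, from which \eqref{def:scattering} follows in the usual way.

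\textbf{Step 1 (uniform coercivity).} I would first upgrade the continuity argument of \cite[Theorem 3.10]{OP} to a quantitative statement: there exist $\delta,\kappa>0$, depending only on $E_\mu(u_0,v_0)M_{3\gamma}(u_0,v_0)$ and on the ground-state quantity $E_{3\gamma}(\phi,\psi)M_{3\gamma}(\phi,\psi)$, such that, with $G$ as in \eqref{defi-G},
\[
G(u(t),v(t))\geq \delta K(u(t),v(t)),\qquad K(u(t),v(t))\leq \kappa,\qquad \forall\, t\in\R.
\]
This follows by writing $G=K-3P$ and using \eqref{GN-ineq} together with \eqref{cond-ener}--\eqref{cond-gwp} to force $3P\leq (1-\delta)K$ uniformly in time.

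\textbf{Step 2 (Morawetz estimates).} The coercivity of Step 1 is then fed into space-time integrability estimates for $(u,v)$. In the radial case I would apply the classical Morawetz identity with weight $a(x)$ a smooth truncation of $|x|$, applied simultaneously to both components with the natural density $\rho:=|u|^2+3\gamma|v|^2$ associated with $M_{3\gamma}$; computing $\partial_t^2 \int a(x)\rho\,dx$ produces as main contribution a positive multiple of $G(u(t),v(t))$ plus boundary errors controlled via the radial Sobolev embedding, so that Step 1 yields the desired decay of $L^4_x$-type space-time norms. In the non-radial case I would use an interaction Morawetz estimate built again from $\rho$: at $\gamma=3$ the Galilean invariance \eqref{gal-trans} makes the mixing terms between the two equations close with the correct signs and yields a clean global bound, while for $|\gamma-3|<\eta$ the loss of Galilean invariance generates extra error terms, linear in $|\gamma-3|$, that can be absorbed into the main inequality provided $\eta$ is chosen small in terms of the conserved quantities of the initial datum -- which is the origin of the dependence $\eta=\eta(E_{3\gamma},M_{3\gamma})$ in the statement.

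\textbf{Step 3 and main obstacle.} With Step 2 available, a standard interval-partition argument combined with the small-data Strichartz theory recalled in Section \ref{sec:pre} and the uniform $H^1$ bound of Step 1 gives the finiteness of a global scattering norm, hence \eqref{def:scattering}. The genuine difficulty lies in Step 2 in the non-radial case: the cubic cross nonlinearities $\overline{u}^2 v$ and $u^3$ intertwine the two components, and their contribution to the interaction-Morawetz integrand has a manifestly good sign only at $\gamma=3$; the quantitative control of the error terms arising when $\gamma\neq 3$, while keeping the coercive main term from Step 1 dominant, is the technical heart of the proof and forces the smallness restriction on $|\gamma-3|$.
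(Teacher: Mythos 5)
Your overall architecture -- quantitative coercivity from the sharp Gagliardo--Nirenberg inequality, fed into a (interaction) Morawetz estimate, closed by a Dodson--Murphy-type scattering criterion built on the small-data theory -- is exactly the route the paper takes, and your diagnosis of where the smallness of $|\gamma-3|$ enters in the non-radial case (error terms linear in $|\gamma-3|$ coming from the loss of Galilean invariance, absorbed at the end) matches the term \eqref{Mc5} in Proposition \ref{Imnn}. Two refinements you omit but would need: the coercivity of Step 1 must be upgraded to a \emph{localized and Galilean-boosted} version (Lemma \ref{lem-coer-2}), i.e. $K-3P\geq \nu K$ for $\chi_R(\cdot-z)e^{ix\cdot\xi_j}$-truncations uniformly in $t,z,\xi$, because the main term produced by the interaction Morawetz identity is a localized, boosted kinetic-minus-potential energy after one gauges away the local momentum by the choice of $\xi(t,z,R)$; the global inequality $G\geq\delta K$ alone does not plug in.

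There is, however, a genuine problem with your radial Step 2 as written. You propose to compute $\partial_t^2\int a(x)\rho\,dx$ with $\rho=|u|^2+3\gamma|v|^2$. By Lemma \ref{Imporide}, $\partial_t\rho=-2\nabla\cdot\IM(\overline{u}\nabla u)-6\nabla\cdot\IM(\overline{v}\nabla v)$, so the first derivative is $2\int\nabla a\cdot\IM(\overline{u}\nabla u+3\overline{v}\nabla v)\,dx$, carrying the coefficient $3$ on the $v$-momentum. But the clean momentum identity \eqref{Idn} holds for the combination $\IM(\overline{u}\partial_k u+\gamma\overline{v}\partial_k v)$ with coefficient $\gamma$: only then do the cross terms $\overline{u}^2v$ and $u^3$ recombine into the perfect gradient $2\partial_k N(u,v)$. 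Differentiating your first derivative in time therefore produces, for $\gamma\neq 3$, a residue $(3-\gamma)\partial_t\IM(\overline{v}\nabla v)$ whose nonlinear contribution is not in divergence form, i.e. errors of size $|\gamma-3|$ -- and the radial case of the theorem imposes \emph{no} smallness on $|\gamma-3|$, so these cannot be absorbed. The paper sidesteps this entirely by working with the first time derivative of the $\gamma$-weighted momentum functional $\Mcal_\varphi(t)=2\ima\int\nabla\varphi\cdot(\nabla u\,\overline{u}+\gamma\nabla v\,\overline{v})\,dx$ (which is \emph{not} the derivative of a variance unless $\gamma=3$), for which Lemma \ref{lem-viri-iden} gives a clean identity for every $\gamma>0$; moreover the weight must be a truncation of $|x|^2$ (as in \eqref{defi-varphi-R}), not of $|x|$, if the main term is to be a positive multiple of $G=K-3P$. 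With that substitution your radial argument goes through along the lines of Lemmas \ref{lem-viri-est-rad} and \ref{Mst1}.
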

Our proof of the scattering results is based on the recent works by Dodson and Murphy \cite{DM-MRL} (for non-radial solutions) and \cite{DM-PAMS} (for radial solutions), using suitable scattering criteria and Morawetz-type estimates. In the non-radial case, we make use of an interaction Morawetz estimate to derive a space-time estimate. In the radial case, we make use of localized Morawetz estimates and radial Sobolev embeddings to show a suitable space-time bound of the solution.

Let us highlight the main novelties of this paper, regarding the linear asymptotic dynamics. For the classical focusing cubic equation in $H^1(\R^3),$ scattering (and blow-up) below the mass-energy threshold, was proved by Holmer and Roudenko in \cite{HR} for radial solutions, by exploiting the concentration/compactness and rigidity scheme in the spirit of Kenig and Merle, see \cite{KM}. The latter scattering result has been then extended to non-radial solution in Duyckaerts, Holmer, and Roudenko \cite{DHR}. To remove the radiality assumption, a crucial role is played by the invariance of the cubic NLS under the Galilean boost, which enables to have a zero momentum for the soliton-like solution. As observed in Remark \ref{rmk:mass-res}, equation \eqref{SNLS} lacks the Galilean invariance unless $\gamma=3.$ Hence we cannot rely on a Kenig and Merle road map to achieve our scattering results, and we instead build our analysis on the recent method developed by Dodson and Murphy, see \cite{DM-PAMS, DM-MRL}. In the latter two cited works, Dodson and Murphy give alternative proofs of the scattering results contained in \cite{HR, DHR}, which avoid the use of the concentration/compactness and rigidity method. They give a shorter proofs, though quite technical, based on Morawetz-type estimates. In our work, by borrowing from \cite{DM-PAMS, DM-MRL}, we prove  interaction Morawetz and Morawetz estimates for \eqref{SNLS}, and we prove Theorem \ref{Th1} for non-radial solutions which do not fit the mass-resonance condition, as well as for radially symmetric solutions. In this latter case, instead, we only need (localized) Morawetz estimates, which are less involved with respect to the interaction Morawetz ones, as we can take advantage of the spatial decay of radial Sobolev functions. \\

Our second main result is about formation of singularities in finite time for solutions to  \eqref{SNLS}. We state it for two classes of initial data. Indeed, besides the fact that these initial data must satisfy the a-priori bounds given by \eqref{cond-ener} and \eqref{cond-blow}  -- the latter (see below) replacing the condition \eqref{cond-gwp} yielding to global well-posedness -- they can belong either to the space of radial function, or to the anisotropic space of cylindrical function having finite variance in the last variable. The Theorem reads as follows. 

\begin{theorem} \label{theo-blow}
	Let $\mu, \gamma>0$, and $(\phi,\psi) \in \Gc(0, 3\gamma, \gamma)$. Let $(u_0,v_0) \in H^1(\R^3) \times H^1(\R^3)$ satisfy either $E_\mu(u_0,v_0)<0$ or, if $E_\mu(u_0,v_0) \geq 0$, we assume moreover that  \eqref{cond-ener} holds and 
	\begin{align} \label{cond-blow}
	K(u_0,v_0) M_{3\gamma}(u_0,v_0) > K(\phi,\psi) M_{3\gamma}(\phi,\psi).
	\end{align}
	If the initial data satisfy
	\begin{itemize}[leftmargin=5mm]
		\item either $(u_0,v_0)$ is radially symmetric,
		\item or $(u_0,v_0) \in \Sigma_3 \times \Sigma_3$,
		where
		\[
		\Sigma_3:= \left\{ f \in H^1(\R^3) \ : \ f(y,z) = f(|y|,z), z f \in L^2(\R^3) \right\}
		\]
		with $x=(y,z), y=(x_1,x_2) \in \R^2$ and $z \in \R,$
	\end{itemize}
	then the corresponding solution to \eqref{SNLS} blows-up in finite time.
\end{theorem}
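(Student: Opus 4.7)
My plan is the classical variational plus localized virial scheme, with the Pohozaev-type functional $G$ in \eqref{defi-G} as the central quantity. The blow-up will be extracted from a differential inequality on a weighted variance, and as pointed out in the abstract I expect the last step, replacing Glassey's convexity argument by a sharper ODE argument, to be the genuine difficulty.

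\textbf{Step 1 (variational trapping).} Under either $E_\mu(u_0,v_0)<0$, or \eqref{cond-ener}--\eqref{cond-blow}, I claim the existence of $\delta>0$ such that for every $t$ in the maximal existence interval one has
\[
G(u(t),v(t)) \leq -\delta\, K(u(t),v(t)),\qquad K(u(t),v(t))\,M_{3\gamma}(u_0,v_0) > K(\phi,\psi)\,M_{3\gamma}(\phi,\psi).
\]
The argument in the threshold case is standard: the scaling $(f,g)_\lambda:=(\lambda^{3/2}f(\lambda\cdot),\lambda^{3/2}g(\lambda\cdot))$ preserves $M_{3\gamma}$ and rescales $K,P$ as $\lambda^2,\lambda^3$, so $\lambda\mapsto E_\mu(f_\lambda,g_\lambda)$ has a unique maximum at the scale where $G=0$. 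By \eqref{GN-ineq} and the Pohozaev identity for the ground state, this maximum equals $\tfrac12 E_{3\gamma}(\phi,\psi)M_{3\gamma}(\phi,\psi)/M_{3\gamma}(u_0,v_0)$. Conservation of mass and energy, together with \eqref{cond-ener}--\eqref{cond-blow} and a continuity argument on $t\mapsto K(u(t),v(t))$, prevents the trajectory from crossing the critical scale $G=0$; the quantitative gap in the mass-energy product then yields the stated lower bound on $-G/K$. In the case $E_\mu(u_0,v_0)<0$ one gets the bound directly from the algebraic identity $G = 3E_\mu - \tfrac12 K - \tfrac32 M_\mu$, which forces $G\leq -\tfrac12 K$.

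\textbf{Step 2 (localized virial).} For a smooth real weight $\phi$ set
\[
V_\phi(t):=\int_{\R^3}\phi(x)\bigl(|u(t,x)|^2+\gamma|v(t,x)|^2\bigr)\,dx;
\]
its second time-derivative along \eqref{SNLS} has the usual form involving $D^2\phi$ against kinetic densities, $\Delta^2\phi$ against $|u|^2+\gamma|v|^2$, and $\Delta\phi$ against the nonlinear density $N(u,v)$ from \eqref{defi-N}. For $\phi=|x|^2$ one would formally recover $V''=8\,G(u,v)$, but this weight is not admissible in either case of Theorem~\ref{theo-blow}. In the radial case I use $\phi_R(x)=R^2\chi(|x|/R)$ with $\chi(r)=r^2$ near the origin as in Ogawa--Tsutsumi, and control the cut-off errors through the radial Sobolev estimate $\||x|f\|_{L^\infty(|x|\geq R)}\lesssim\|f\|_{H^1}^{1/2}\|\nabla f\|_{L^2}^{1/2}$. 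In the cylindrical case I first observe, by uniqueness in $X$, that $y$-radial symmetry is propagated by \eqref{SNLS}, then I take $\phi(x)=z^2$, for which $V_\phi$ is finite by the $\Sigma_3$ hypothesis; the missing $y$-contribution to $G$ is recovered through the cylindrical symmetry and a two-dimensional Pohozaev manipulation. Both settings lead to
\[
V_\phi''(t) \leq 8\,G(u(t),v(t)) + \mathcal{E}_R(t),
\]
with $\mathcal{E}_R(t)$ nonlinear and localized, of order $R^{-\alpha} K(u(t),v(t))^\beta$ for some $\alpha,\beta>0$.

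\textbf{Step 3 (ODE closing).} The main obstacle is that a direct Glassey-type conclusion $V_\phi''\leq -c<0$ is blocked, because $\mathcal{E}_R(t)$ carries a positive power of $K(u(t),v(t))$ which is a priori unbounded. My plan is to combine the strong form $-G\geq cK$ from Step~1 with a judicious choice of scale $R=R(t)$, turning the output of Step~2 into a differential inequality of the schematic type $V_\phi''(t)\leq -c\,K(u(t),v(t))+\text{(absorbable)}$. One then distinguishes two regimes: if $K(u(t),v(t))$ stays bounded on the maximal interval, the integrated inequality forces $V_\phi(t)<0$ in finite time, contradicting $V_\phi\geq 0$; if $K(u(t),v(t))$ is unbounded along a sequence, the blow-up alternative already yields the conclusion. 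Making this dichotomy uniform through an ODE-type comparison for $V_\phi$, coupled with the Cauchy--Schwarz bound $|V_\phi'(t)|^2\lesssim V_\phi(t)\cdot K(u(t),v(t))$, is the precise ODE argument announced in the abstract and the technical heart of the proof.
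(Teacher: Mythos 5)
Your Step 1 is essentially the paper's Lemma \ref{lem-nega-G} and is fine (the paper phrases the conclusion as $G(u,v)+\vareps K(u,v)\le -c$, which your two claims combine to give). The gaps are in Steps 2 and 3, and they sit exactly at the point the paper identifies as the obstruction. First, the object you differentiate twice, $V_\phi(t)=\int\phi\,(|u|^2+\gamma|v|^2)\,dx$, is not usable when $\gamma\neq 3$: by \eqref{Idg} the density $|u|^2+\gamma\beta|v|^2$ has a divergence-form time derivative only for $\beta=3$, and otherwise one picks up the non-divergence term $\tfrac23(1-\tfrac{\beta}{3})\IM(u^3\overline v)$ weighted by $\phi$ itself; moreover the momentum functional $\Mcal_\varphi$ of \eqref{defi-M-varphi} --- the quantity whose \emph{first} derivative equals the good virial expression of Lemma \ref{lem-viri-iden} --- carries the weight $\overline u\nabla u+\gamma\,\overline v\nabla v$ and is therefore not $\tfrac{d}{dt}$ of any weighted mass unless $\gamma=3$. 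Hence ``$V_\phi''\le 8G+\mathcal{E}_R$'' is not obtainable, and with it goes the positivity $V_\phi\ge 0$ that your Step 3 wants to contradict; this is precisely why \eqref{eq:gla} and the Glassey convexity route are declared unavailable. Second, your dichotomy is broken: the branch ``if $K$ is unbounded along a sequence, the blow-up alternative already yields the conclusion'' is false, since unboundedness of $K$ on $[0,\infty)$ is perfectly compatible with $T_+=\infty$; the blow-up alternative converts $T_+<\infty$ into norm blow-up, not the reverse.

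What the paper actually does is take $\Mcal_{\varphi_R}$ itself, not a variance, as the monotone quantity, with $R$ fixed once and for all (no $R(t)$ is needed: the errors $CR^{-2}K+CR^{-2}$ from Lemmas \ref{lem-viri-est-rad} and \ref{lem-viri-est-cyli} are absorbed using the uniform constants $c,\vareps$ of Lemma \ref{lem-nega-G}), giving $\tfrac{d}{dt}\Mcal_{\varphi_R}\le -4c-4\vareps K$. One integration yields $\Mcal_{\varphi_R}(t)\le -4\vareps\int_{t_0}^tK$, and the Cauchy--Schwarz bound is taken on the momentum, $|\Mcal_{\varphi_R}|\lesssim \|\nabla\varphi_R\|_{L^\infty}\sqrt{K}$ --- not your $|V_\phi'|^2\lesssim V_\phi\cdot K$, which would again require the variance. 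This closes into $\Mcal_{\varphi_R}(t)\le -A\int_{t_0}^t|\Mcal_{\varphi_R}(s)|^2\,ds$, i.e.\ a Riccati inequality $z'\ge A^2z^2$ forcing $K\to\infty$ in finite time. Finally, in the cylindrical case the weight must be $\psi_R(y)+z^2$ with $\psi_R$ a truncation of $|y|^2$: with $z^2$ alone the virial produces $8(\|\partial_zu\|^2_{L^2}+\|\partial_zv\|^2_{L^2})-8P$, which differs from $8G$ by $-8(\|\nabla_yu\|^2_{L^2}+\|\nabla_yv\|^2_{L^2})+16P$, and the $+16P$ term has the wrong sign; I do not see how a ``two-dimensional Pohozaev manipulation'' recovers it.
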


Let us now comment previous known results about blow-up for \eqref{SNLS} and the one stated above, and highlight the main novelties of this paper regarding the blow-up achievements with respect to the previous literature.\\

In the mass-resonance case, i.e., $\gamma=3$, and provided $\mu=3\gamma=9$, the existence of finite time blow-up solutions to \eqref{SNLS} with finite variance initial data was proved in \cite[Theorems 4.6 and 4.8]{OP}. More precisely, they proved that if $(u_0,v_0) \in \Sigma (\R^3)\times \Sigma(\R^3)$ with $\Sigma(\R^3) = H^1(\R^3) \cap L^2(\R^3, |x|^2 dx)$ satisfying either $E_9(u_0,v_0) <0$ or if $E_9(u_0,v_0) \geq 0$, they moreover assumed that 
	\begin{align*}
	E_9(u_0,v_0) M_{9}(u_0,v_0) &< \frac{1}{2}E_{9}(\phi,\psi) M_{9}(\phi,\psi),\\
	K(u_0,v_0) M_{9}(u_0,v_0) &> K(\phi,\psi) M_{9}(\phi,\psi),
	\end{align*}
	where $(\phi,\psi) \in \Gc(0,9,3)$, then the corresponding solution to \eqref{SNLS} blows-up in finite time. The proof of the blow-up result in \cite{OP} is based on the following virial identity (see Remark \ref{rem-viri-iden})
	\begin{equation}\label{eq:gla}
	\frac{d^2}{dt^2} V(t) = 4 G(u(t),v(t)),
	\end{equation}
	where 
	\[
	V(t):= \int |x|^2 \left( |u(t,x)|^2 + 9 |v(t,x)|^2\right) dx.
	\]

	Using \eqref{eq:gla}, the finite time blow-up result follows from a convexity argument. For the power-type NLS equation, this kind of convexity strategy goes back to the early work of Glassey, see \cite{Glassey}, for finite variance solutions with negative initial energy. See the works by Ogawa and Tsutsumi \cite{OT} for the removal of the finiteness hypothesis of the variance, but with the addition of the radial assumption. See the already mentioned paper \cite{HR} for an extension to the cubic NLS up to the mass-energy threshold, of the results by Glassey, and Ogawa and Tsutsumi. 

If we do not assume the mass-resonance condition, or we do not assume that $\mu\neq3\gamma,$ the identity \eqref{eq:gla} ceases to be valid. Thus the convexity argument is no-more applicable in our general setting. The proof of Theorem \ref{theo-blow} above relies instead on an ODE argument, in the same spirit of our previous work \cite{DF}, using localized virial estimates and the negativity property of the Pohozaev functional (see Lemma \ref{lem-nega-G}). We point-out that  our result not only extends the one in \cite{OP} to radial and cylindrical solutions, but also extends it to the whole range of $\mu, \gamma>0$. It worth mentioning that blow-up in a full generality, i.e. for infinite-variance solutions with no symmetric assumptions, is still an open problem even for the classical cubic NLS. \\
	
We conclude this introduction by reporting some notation used along the paper, and by disclosing how the paper is organized.

\subsection{Notations}
We use the notation $X\lesssim Y$ to denote $X\leq C Y$ for some constant $C>0$. When  $X\lesssim Y$ and $Y\lesssim X$ (possibly for two different universal constants), we write $X\sim Y,$ or equivalently, we use the `big O'  notation $\Oc$, e.g., $X=\Oc(Y)$.  For $I\subset \R$ an interval, we denote the mixed norm
\[  
\|f\|_{L^{q}_{t}L^{r}_{x}(I\times\R^{3})}= \(\int_I \( \int_{\R^3} |f(t,x)|^r dx\)^{\frac{q}{r}} dt\)^{\frac{1}{q}}
\]
with the usual modifications when either $r$ or $q$ are infinity. When $q=r$, we simply write $\|f\|_{L^{q}_{t,x}(I\times\R^{3})}$. Let $f, g \in L^q_t L^r_x(I \times \R^3)$, we denote
\[
\|(f,g)\|_{L^q_t L^r_x \times L^q_t L^r_x(I\times \R^3)} := \|f\|_{L^{q}_{t}L^{r}_{x}(I\times\R^{3})} + \|g\|_{L^{q}_{t}L^{r}_{x}(I\times\R^{3})}
\]
and if $q=r$, we simply write
\[
\|(f,g)\|_{L^q_{t,x}\times L^q_{t,x}(I\times \R^3)} := \|f\|_{L^q_{t,x}(I\times\R^{3})} + \|g\|_{L^{q}_{t,x}(I\times\R^{3})}.
\]
The $L^p(\R^3)$ spaces, with $1\leq,p\leq\infty,$  are the usual Lebesgue spaces, as well as  spaces $W^{k,p}(\R^3)$ spaces, and their homogeneous versions, are the classical Sobolev spaces.
To lighten the notation along the paper, we will avoid to write $\R^3$  (unless necessary), as we are dealing with a three-dimensional problem. 

\subsection{Structure of the paper} This paper is organized as follows. In Section \ref{sec:pre}, we state  preliminary results that will be needed throughout the paper, and we will prove some coercivity  conditions which play a vital role to get the scattering results. In Section \ref{sec:VM}, we introduce localized quantities, and we derive localized virial estimates, Morawetz and interaction Morawetz estimates which will be the fundamental tools to establish the main results. The latter a-priori estimates will be shown in both radial and non-radial settings. In Section \ref{sec:sct}, we give scattering criteria for radial and non-radial solutions. We eventually prove, in Section \ref{sec:proofs-main}, the scattering results and the blow-up results, by employing the tools developed in the previous Sections. We conclude with the Appendixes \ref{sec:app:A} and \ref{sec:app:B}, devoted to the proofs of some results used along the paper. 

\section{Preliminary tools}\label{sec:pre}
In this section, we introduce some basic tools towards the proof of our main achievements. Specifically, we give a small data scattering result, as well as useful properties related to the ground states. We postpone the proof of some of the following results to the Appendix \ref{sec:app:A}.
\subsection{Small data theory}

We have the following small data scattering result, which will be useful in the sequel.

\begin{lemma}\label{lem-smal-scat}
Let $\mu,\gamma>0$, and $T>0$. Suppose that $(u,v)$ is a global $H^1$-solution to \eqref{SNLS} satisfying 
\[ 
\sup_{t\in \R}\|(u(t),v(t))\|_{H^{1}\times H^{1}}\leq E
\]
for some constant $E>0$. There exists $\epsilon_{\sd}=\epsilon_{\sd}(E)>0$ such that if 
\begin{equation}\label{Small-sc}
\| (\Sc_{1}(t-T)u(T),\Sc_{2}(t-T)v(T)) \|_{L_{t}^{4}L^{6}_{x}\times L_{t}^{4}L^{6}_{x}([T,\infty)\times \R^{3})}<\epsilon_{\sd},
\end{equation}
then the solution scatters forward in time.
\end{lemma}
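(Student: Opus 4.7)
The plan is to prove this standard small-data scattering criterion by combining the Duhamel representation from the reference time $T$ with a Strichartz-type bootstrap, leveraging both the assumed smallness of the linear profile and the uniform $H^1$-bound $E$.

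First, I would write the Duhamel formula on $[T,\infty)$:
\[
u(t) = \Sc_1(t-T) u(T) - i \int_T^t \Sc_1(t-s)\, N_1(u,v)(s)\, ds,
\]
and similarly for $v$ with an overall factor $\gamma^{-1}$, where $N_1$ and $N_2$ are the cubic nonlinearities read off from \eqref{SNLS}. The goal is to upgrade the smallness assumption \eqref{Small-sc} into the finiteness of the scattering norm
\[
\mathcal{S}(I):=\|(u,v)\|_{L^{4}_{t}L^{6}_{x}\times L^{4}_{t}L^{6}_{x}(I\times \R^{3})}
\]
on the whole forward interval $[T,\infty)$, since finiteness of $\mathcal{S}([T,\infty))$ is well known to imply existence of a scattering state together with the $H^1$-convergence \eqref{def:scattering} via a standard Cauchy-in-time argument applied to $\Sc_j(-t)(u(t),v(t))$.

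Second, I would establish the key nonlinear estimate. Using Strichartz estimates (in the form appropriate to the $\dot H^{1/2}$-admissible pair $(4,6)$ in dimension three, which one may access either through inhomogeneous Strichartz or by passing through the $L^{2}$-admissible pair $(4,3)$ together with the Gagliardo--Nirenberg relation $\|u\|_{L^{6}}\lesssim \|\nabla u\|_{L^{3}}^{1/2}\|u\|_{L^{3}}^{1/2}$), combined with H\"older in space-time and the embedding $H^{1}(\R^{3})\hookrightarrow L^{6}(\R^{3})$ which gives $\|(u,v)(t)\|_{L^{6}\times L^{6}}\lesssim E$, I expect to obtain an inequality of the form
\[
\mathcal{S}([T,\tau]) \leq \|(\Sc_{1}(t-T)u(T),\Sc_{2}(t-T)v(T))\|_{L^{4}_{t}L^{6}_{x}\times L^{4}_{t}L^{6}_{x}([T,\tau]\times\R^{3})} + C(E)\,\mathcal{S}([T,\tau])^{3},
\]
uniformly in $\tau\in[T,\infty)$. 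Here the cubic right-hand side arises because each trilinear term in $N_1,N_2$ contains three factors that can be distributed via H\"older: two factors are paid in the small scattering norm $L^{4}_{t}L^{6}_{x}$, and one is absorbed in the uniformly bounded $L^{\infty}_{t}L^{6}_{x}\hookleftarrow L^{\infty}_{t}H^{1}_{x}$-norm controlled by $E$.

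Third, I would run a continuity/bootstrap argument. Choosing $\epsilon_{\sd}=\epsilon_{\sd}(E)$ so small that $4C(E)\epsilon_{\sd}^{2}<1$, and observing that $\tau\mapsto \mathcal{S}([T,\tau])$ is continuous and vanishes at $\tau=T$, the inequality above forces $\mathcal{S}([T,\tau])\leq 2\epsilon_{\sd}$ for all $\tau\geq T$, hence $\mathcal{S}([T,\infty))\leq 2\epsilon_{\sd}$. Finally, the scattering states
\[
u^{+}:= u(T) - i\int_{T}^{\infty}\Sc_{1}(T-s) N_{1}(u,v)(s)\,ds,\quad v^{+}:= v(T) - \tfrac{i}{\gamma}\int_{T}^{\infty}\Sc_{2}(T-s) N_{2}(u,v)(s)\,ds,
\]
are well-defined elements of $H^{1}$ by a derivative-Strichartz version of the same trilinear estimate (now controlling the gradient), and the desired $H^{1}$-convergence \eqref{def:scattering} follows from the dominated-convergence-in-Strichartz argument on the tails.

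The main obstacle I foresee is not the bootstrap itself but making the nonlinear estimate close cleanly at the $L^{4}_{t}L^{6}_{x}$ level: the pair $(4,6)$ is $\dot H^{1/2}$- rather than $L^{2}$-admissible in $\R^{3}$, so the cleanest route is to work also with a derivative-Strichartz norm such as $L^{4}_{t}W^{1,3}_{x}$, deduce $L^{4}_{t}L^{6}_{x}$ via Sobolev/Gagliardo--Nirenberg, and only then insert the $L^\infty_t H^1_x\leq E$ bound on the ``extra'' factor in each cubic term. The mixed nonlinear couplings ($\bar u^{2}v$ and $u^{3}$ across the two equations) are handled identically since they are also trilinear with three factors that split among the $L^{4}_{t}L^{6}_{x}$-smallness and the $H^{1}$-boundedness, and the strict positivity of $\gamma,\mu$ guarantees that both propagators $\Sc_{1}, \Sc_{2}$ satisfy the same Strichartz estimates.
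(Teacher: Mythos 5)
Your proposal is correct and follows essentially the same route as the paper: Duhamel from time $T$, Strichartz and Sobolev estimates (via the $L^2$-admissible pair $(4,3)$ and a dual norm $L^2_tW^{1,6/5}_x$) placing two factors of each cubic term in the small $L^4_tL^6_x$ norm and one in the $E$-controlled $L^\infty_tH^1_x$ norm, a continuity argument, and the Cauchy criterion for $(\Sc_1(-t)u(t),\Sc_2(-t)v(t))$ to produce the scattering state. The only slip is that your displayed inequality shows $C(E)\,\mathcal{S}([T,\tau])^{3}$ while the H\"older splitting you describe (and the paper uses) actually yields $E\,\mathcal{S}([T,\tau])^{2}$; either power closes the bootstrap for $\epsilon_{\sd}(E)$ small enough, so this is harmless.
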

\begin{proof}
See Appendix \ref{sec:app:A}.
\end{proof}

\subsection{Variational analysis}\label{Vari-Anal}
We first recall some basic properties of ground states in $\Gc(0,3\gamma,\gamma)$ and then show a coercivity condition (see \eqref{coer-prop}), which play a vital role to get scattering results. 

It was shown in \cite[Lemma 3.5]{OP} that any ground state $(\phi,\psi) \in \mathcal G(0,3\gamma,\gamma)$ optimizes the Gagliardo-Nirenberg inequality \eqref{GN-ineq}, that is
\[
C_{\opt} = \frac{P(\phi,\psi)}{\left(K(\phi,\psi)\right)^{\frac{3}{2}} \left(M_{3\gamma}(\phi,\psi)\right)^{\frac{1}{2}}}.
\]
Using the Pohozaev identities (see \cite[Lemma 3.4]{OP})
\begin{align} \label{poho-iden}
P(\phi,\psi) = S_{0,3\gamma,\gamma}(\phi,\psi) = E_{3\gamma}(\phi,\psi) = M_{3\gamma}(\phi,\psi)= \frac{1}{3} K(\phi,\psi),
\end{align}
we have
\begin{align} \label{opti-cons}
C_{\opt} = \frac{1}{3} \left( K(\phi,\psi) M_{3\gamma}(\phi,\psi)\right)^{-\frac{1}{2}}.
\end{align}

To employ some Morawetz estimates in the proof of the scattering theorem,  we will also use the following refined Gagliardo-Nirenberg inequality.

\begin{lemma} \label{lem-refi-GN-ineq}
Let $(\phi,\psi)\in \Gc(0, 3\gamma, \gamma)$. For any $(f,g)\in H^{1}\times H^{1}$ and $\xi_{1}$, $\xi_{2}\in \R^{3}$,
we have
\begin{equation}\label{refi-GN-ineq}
P(|f|,|g|)\leq \frac{1}{3}\(\frac{K(f,g) M_{3\gamma}(f,g)}{K(\phi,\psi) M_{3\gamma}(\phi,\psi)}\)^{\frac{1}{2}}K(e^{ix\cdot\xi_{1}}f,e^{ix\cdot\xi_{2}}g).
\end{equation}
\end{lemma}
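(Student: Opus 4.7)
The plan is to deduce \eqref{refi-GN-ineq} from the sharp Gagliardo-Nirenberg inequality \eqref{GN-ineq} applied to the pair $(|f|,|g|)$, together with two elementary observations: mass is invariant under taking modulus or multiplication by a phase, and the Dirichlet energy of $|f|$ is dominated by that of $e^{ix\cdot\xi}f$ for any $\xi\in\R^3$ (diamagnetic inequality).

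First I would apply \eqref{GN-ineq} to $(|f|,|g|)$ and invoke \eqref{opti-cons} to replace $C_{\opt}$:
\[
P(|f|,|g|)\leq \frac{1}{3}\bigl(K(\phi,\psi) M_{3\gamma}(\phi,\psi)\bigr)^{-\frac{1}{2}} K(|f|,|g|)^{\frac{3}{2}}\, M_{3\gamma}(|f|,|g|)^{\frac{1}{2}}.
\]
Next I would use $M_{3\gamma}(|f|,|g|)=M_{3\gamma}(f,g)$ and split the kinetic term as $K(|f|,|g|)^{3/2}=K(|f|,|g|)^{1/2}\cdot K(|f|,|g|)$. The diamagnetic inequality, applied separately to $f$ and $g$, gives $\|\nabla|f|\|_{L^2}\leq \|\nabla f\|_{L^2}$ and analogously for $g$, so
\[
K(|f|,|g|)^{\frac{1}{2}}\leq K(f,g)^{\frac{1}{2}}.
\]
For the remaining factor of $K(|f|,|g|)$, I would again apply the diamagnetic inequality, now to $e^{ix\cdot\xi_1}f$ and $e^{ix\cdot\xi_2}g$, using that $|e^{ix\cdot\xi_j}f|=|f|$ to obtain
\[
K(|f|,|g|)\leq K\bigl(e^{ix\cdot\xi_1}f,\,e^{ix\cdot\xi_2}g\bigr).
\]
Multiplying the pieces together yields \eqref{refi-GN-ineq}.

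There is no real obstacle; the only point to double-check is the two-component diamagnetic inequality, which follows componentwise from the pointwise identity $\nabla|h|=\rea\bigl(\tfrac{\bar h}{|h|}\nabla h\bigr)$ on $\{h\neq 0\}$ (vanishing elsewhere), so that $|\nabla|h||\leq |\nabla h|$ and in particular $|\nabla|h||\leq |\nabla(e^{ix\cdot\xi}h)|$ for any $\xi\in\R^3$. One should also remark that since every inequality employed is sharp only when $\xi_j=0$ and $f,g$ are (appropriately rotated) positive ground states, the refined form genuinely improves \eqref{GN-ineq} for non-real data, which is precisely the feature needed when combining Morawetz-type estimates with Galilean-adjusted momentum in the non-radial scattering argument.
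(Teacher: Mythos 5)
Your proposal is correct and uses the same ingredients as the paper's proof: the sharp Gagliardo--Nirenberg inequality \eqref{GN-ineq} with the optimal constant \eqref{opti-cons}, the invariance of $P(|f|,|g|)$ and $M_{3\gamma}$ under modulus and phase, and the diamagnetic inequality $\|\nabla |h|\|_{L^2}\leq \|\nabla (e^{ix\cdot\xi}h)\|_{L^2}$. The only (cosmetic) difference is organizational: you split $K(|f|,|g|)^{3/2}=K(|f|,|g|)^{1/2}\cdot K(|f|,|g|)$ and bound the two factors separately, whereas the paper applies its first estimate to the modulated pair $(e^{ix\cdot\xi_1}f,e^{ix\cdot\xi_2}g)$ and then takes infima over $\xi_1,\xi_2$ -- your bookkeeping is arguably the cleaner of the two.
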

\begin{proof}
See Appendix \ref{sec:app:A}.
\end{proof}
We conclude this preliminary section by giving the following two coercivity results. 

\begin{lemma}\label{L22}
Let $\mu, \gamma>0$, and $(\phi,\psi)\in \Gc(0, 3\gamma, \gamma)$. Let $(u_{0},v_{0})\in H^{1}\times H^{1}$ satisfy \eqref{cond-ener} and \eqref{cond-gwp}. Then the corresponding solution to \eqref{SNLS} exists globally in time and satisfies
\begin{equation}\label{est-K}
\sup_{t\in \R} K(u(t),v(t))\leq 6E_{\mu}(u_{0},v_{0}).
\end{equation}
Moreover, there exists $\delta=\delta(u_0,v_0,\phi,\psi)>0$ such that 
\begin{equation}\label{coer-1}
K(u(t),v(t))M_{3\gamma}(u(t),v(t)) \leq (1-\delta)K(\phi,\psi)M_{3\gamma}(\phi,\psi)
\end{equation}
for all $t\in \R$.
\end{lemma}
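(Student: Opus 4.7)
The plan is to run the standard Kenig--Merle type coercivity/concentration argument, adapted to our conserved quantities. The only mild subtlety is that, since $\mu$ is not assumed to coincide with $3\gamma$, we must use the \emph{conserved} mass $M_{3\gamma}$ (not $M_\mu$) as the scaling-invariant companion to $K$, while the energy still involves $M_\mu$.

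First, from the energy identity
\[
\tfrac12 K(u,v) + \tfrac12 M_\mu(u,v) - P(u,v) = E_\mu(u_0,v_0)
\]
and the non-negativity of $M_\mu$, I would write $\tfrac12 K(u,v) \leq E_\mu(u_0,v_0) + P(u,v)$. Multiplying through by $M_{3\gamma}(u_0,v_0) = M_{3\gamma}(u(t),v(t))$ (conservation of mass) and inserting the sharp Gagliardo--Nirenberg inequality \eqref{GN-ineq} together with the identification of $C_{\opt}$ via \eqref{opti-cons}, I obtain, setting
\[
X(t) := K(u(t),v(t))\,M_{3\gamma}(u(t),v(t)), \qquad Y := K(\phi,\psi)\,M_{3\gamma}(\phi,\psi),
\]
the pointwise-in-time scalar inequality
\[
g(X(t)) \leq E_\mu(u_0,v_0)\,M_{3\gamma}(u_0,v_0), \qquad g(x) := \tfrac12 x - \tfrac{1}{3 Y^{1/2}} x^{3/2}.
\]

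Next I would analyze $g$ on $[0,Y]$: it is strictly increasing there with maximum $g(Y)=Y/6$. By the Pohozaev identities \eqref{poho-iden}, $\tfrac12 E_{3\gamma}(\phi,\psi)M_{3\gamma}(\phi,\psi) = \tfrac16 K(\phi,\psi)M_{3\gamma}(\phi,\psi) = Y/6 = g(Y)$, so hypothesis \eqref{cond-ener} translates into the strict inequality $E_\mu(u_0,v_0)M_{3\gamma}(u_0,v_0) < g(Y)$, while \eqref{cond-gwp} is precisely $X(0) < Y$. A standard continuity/bootstrap argument on the $H^1$-continuous map $t \mapsto X(t)$ then shows that $X(t)$ cannot cross the level $Y$: if it did, by the intermediate value theorem one would have $g(X(t_*)) = g(Y) > E_\mu M_{3\gamma}$ at the crossing time, contradicting the pointwise bound. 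Hence there exists $X_* \in [0,Y)$ with $g(X_*) = E_\mu(u_0,v_0)M_{3\gamma}(u_0,v_0)$, and
\[
X(t) \leq X_* \qquad \text{for all $t$ in the maximal interval of existence},
\]
which gives \eqref{coer-1} with $\delta := 1 - X_*/Y >0$ (depending on $(u_0,v_0,\phi,\psi)$). Combined with mass conservation, $X(t)\leq X_*$ yields a uniform $H^1$-bound on $(u,v)$, so the blow-up alternative forces $T_\pm=\infty$.

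Finally, for the energetic bound \eqref{est-K}, I would use that $X(t)\leq X_*<Y$ implies $\sqrt{X(t)/Y}<1$, hence
\[
g(X(t)) = \tfrac12 X(t)\bigl(1 - \tfrac23\sqrt{X(t)/Y}\bigr) \geq \tfrac16 X(t).
\]
Combining with $g(X(t))\leq E_\mu(u_0,v_0)M_{3\gamma}(u_0,v_0)$ and dividing out by the (conserved, positive) mass $M_{3\gamma}(u_0,v_0)$ gives $K(u(t),v(t))\leq 6 E_\mu(u_0,v_0)$, as desired. The main technical point is really just the bootstrap in the previous paragraph; everything else is algebraic manipulation of the sharp Gagliardo--Nirenberg inequality and the Pohozaev identities.
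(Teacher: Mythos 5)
Your proposal is correct and follows essentially the same route as the paper: the scalar function $g$ is exactly the paper's $G(\lambda)=\tfrac12\lambda-C_{\opt}\lambda^{3/2}$, the continuity/bootstrap argument keeping $X(t)<Y$ is identical, and your bound $g(X)\geq\tfrac16 X$ is the same computation the paper phrases as $P\leq\tfrac13K$ hence $E_\mu\geq\tfrac16K$. The only (cosmetic) difference is how $\delta$ is extracted: you invert $g$ to find $X_*$ and set $\delta=1-X_*/Y$, while the paper strengthens \eqref{cond-ener} to $E_\mu M_{3\gamma}\leq(1-\delta)\tfrac12E_{3\gamma}(\phi,\psi)M_{3\gamma}(\phi,\psi)$ and feeds that into $KM_{3\gamma}\leq 6E_\mu M_{3\gamma}$ — both yield a valid positive $\delta$ depending on the data and the ground state.
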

\begin{proof}
See Appendix \ref{sec:app:A}.
\end{proof}

\begin{lemma} \label{lem-coer-2}
Let $\mu,\gamma>0$, and $(\phi,\psi) \in \mathcal G(0,3\gamma,\gamma)$. Let $(u_{0},v_{0})\in H^{1}\times H^{1}$ satisfy \eqref{cond-ener} and \eqref{cond-gwp}. Let $\delta$ be as in \eqref{coer-1}. Then there exists $R=R(\delta, u_0,v_0, \phi,\psi)>0$ sufficiency large such that for any $z\in \R^{3}$,
\begin{equation}\label{Cb2}
\begin{aligned}
K\(\Gamma_{R}(\cdot-z)u(t),\Gamma_{R}(\cdot-z)v(t)\) &M_{3\gamma}\(\Gamma_{R}(\cdot-z)u(t),\Gamma_{R}(\cdot-z)v(t)\) \\
&\leq \(1-\frac{\delta}{2}\)K(\phi,\psi)M_{3\gamma}(\phi,\psi)
\end{aligned}
\end{equation}
uniformly for $t\in \R$, where $\Gamma_{R}(x):=\Gamma\(\frac{x}{R}\)$ with $\Gamma$ a cutoff function satisfying $0\leq \Gamma(x)\leq 1$ for all $x\in \R^3$. Moreover, there exists $\nu=\nu(\delta) >0$ independent on $t$ so that
for any $\xi_{1}, \xi_{2}\in \R^{3}$, and any $z\in \R^3$,
\begin{equation}\label{coer-prop}
\begin{aligned}
K\(\Gamma_{R}(\cdot-z)e^{ix\cdot\xi_{1}}u(t),\Gamma_{R}(\cdot-z)e^{ix\cdot\xi_{2}}v(t)\) &- 3 P\(\Gamma_{R}(\cdot-z)u(t),\Gamma_{R}(\cdot-z)v(t)\) \\
&\geq \nu  K\(\Gamma_{R}(\cdot-z)e^{ix\cdot\xi_{1}}u(t),\Gamma_{R}(\cdot-z)e^{ix\cdot\xi_{2}}v(t)\)
\end{aligned}
\end{equation}
for any $t\in \R$.
\end{lemma}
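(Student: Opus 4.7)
The plan is to establish \eqref{Cb2} via a direct integration-by-parts argument that controls the localized kinetic energy by the full one up to an $O(R^{-2})$ error, and then to deduce \eqref{coer-prop} by feeding \eqref{Cb2} into the refined Gagliardo--Nirenberg inequality from Lemma \ref{lem-refi-GN-ineq}.

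For \eqref{Cb2}, first observe that $0\leq \Gamma_R \leq 1$ immediately gives
\[
M_{3\gamma}\(\Gamma_R(\cdot-z)u(t), \Gamma_R(\cdot-z)v(t)\) \leq M_{3\gamma}(u(t),v(t)) = M_{3\gamma}(u_0,v_0).
\]
For the kinetic part, expanding $|\nabla(\Gamma_R(\cdot-z) f)|^2$, integrating by parts in the cross term, and using $\Delta(\Gamma_R^2) = 2\Gamma_R\Delta\Gamma_R + 2|\nabla\Gamma_R|^2$ yields the identity
\[
\int_{\R^3} |\nabla(\Gamma_R(\cdot-z) f)|^2\, dx = \int_{\R^3} \Gamma_R^2(\cdot-z)|\nabla f|^2\, dx - \int_{\R^3} (\Gamma_R \Delta \Gamma_R)(\cdot-z)\, |f|^2\, dx.
\]
Since $\|\Gamma_R \Delta \Gamma_R\|_{L^\infty(\R^3)} \lesssim R^{-2}$ uniformly in $z$, one obtains $K(\Gamma_R(\cdot-z)u(t), \Gamma_R(\cdot-z)v(t)) \leq K(u(t),v(t)) + C R^{-2} M_{3\gamma}(u_0,v_0)$. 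Multiplying with the mass bound above and invoking the a priori estimate \eqref{coer-1} from Lemma \ref{L22},
\[
K\cdot M_{3\gamma}\(\Gamma_R(\cdot-z) u(t), \Gamma_R(\cdot-z) v(t)\) \leq (1-\delta) K(\phi,\psi) M_{3\gamma}(\phi,\psi) + C R^{-2} M_{3\gamma}(u_0,v_0)^2,
\]
where $K\cdot M_{3\gamma}$ denotes the product. Choosing $R$ large enough (in terms of $\delta$, $u_0$, $v_0$, $\phi$, $\psi$) absorbs the error term into $\tfrac{\delta}{2} K(\phi,\psi) M_{3\gamma}(\phi,\psi)$, yielding \eqref{Cb2} uniformly in $t\in \R$ and $z\in \R^3$.

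For \eqref{coer-prop}, apply the refined Gagliardo--Nirenberg inequality \eqref{refi-GN-ineq} to $(\Gamma_R(\cdot-z) u(t), \Gamma_R(\cdot-z) v(t))$ with arbitrary $\xi_1, \xi_2 \in \R^3$. Since $\Gamma_R \geq 0$ is real, $|\Gamma_R(\cdot-z) f| = \Gamma_R(\cdot-z) |f|$, and the density \eqref{defi-N} satisfies $N(f,g) \leq N(|f|,|g|)$ because $|\RE(\bar f^3 g)| \leq |f|^3|g|$. Combining these facts with \eqref{Cb2} produces
\[
P\(\Gamma_R(\cdot-z)u(t), \Gamma_R(\cdot-z)v(t)\) \leq \frac{(1-\delta/2)^{1/2}}{3}\, K\(e^{ix\cdot\xi_1}\Gamma_R(\cdot-z) u(t), e^{ix\cdot\xi_2}\Gamma_R(\cdot-z) v(t)\).
\]
Multiplying by $3$ and rearranging proves \eqref{coer-prop} with $\nu := 1 - (1-\delta/2)^{1/2} > 0$, depending only on $\delta$ and independent of $t, z, \xi_1, \xi_2$.

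The main technical point is tracking every error uniformly in $z \in \R^3$ and $t\in \R$: spatial uniformity is immediate because the pointwise bounds on $\Gamma_R$ and its derivatives are translation-invariant, while temporal uniformity follows from mass conservation and the global kinetic bound \eqref{est-K} provided by Lemma \ref{L22}.
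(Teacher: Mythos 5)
Your proof is correct and follows essentially the same route as the paper's: the identity $\int|\nabla(\Gamma_R f)|^2 = \int\Gamma_R^2|\nabla f|^2 - \int\Gamma_R\Delta\Gamma_R|f|^2$ together with $\|\Delta\Gamma_R\|_{L^\infty}\lesssim R^{-2}$, mass conservation, and \eqref{coer-1} to get \eqref{Cb2}, then the refined Gagliardo--Nirenberg inequality \eqref{refi-GN-ineq} combined with $P(f,g)\leq P(|f|,|g|)$ to obtain \eqref{coer-prop} with $\nu=1-(1-\delta/2)^{1/2}$. Your write-up is in fact slightly more explicit than the paper's about the $z$- and $t$-uniformity and about why $N(f,g)\leq N(|f|,|g|)$.
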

\begin{proof}
See Appendix \ref{sec:app:A}.
\end{proof}

\section{Virial and Morawetz estimates}\label{sec:VM}
This section is devoted to the proof of  virial-type, Morawetz-type, and interaction Morawetz-type estimates, which will be crucial for the proof of the main Theorems \ref{Th1} and \ref{theo-blow}.

\subsection{Virial estimates}
We start with  the following identities.  In what follows we use the Einstein convention, so repeated indices are summed. 
\begin{lemma} \label{Imporide}
	Let $\mu, \beta, \gamma>0$, and $(u,v)$ be a $H^1$-solution to \eqref{SNLS}. Then the following identities hold:
	\begin{align}\label{Idg}
	\partial_{t}(|u|^{2}+\gamma\beta|v|^{2})&=-2\nabla\cdot\IM (\overline{u}\nabla u)
	-2\beta\nabla\cdot\IM (\overline{v}\nabla v)+\frac{2}{3}\(1-\frac{\beta}{3}\)\IM (u^{3}\overline{v}),\\
	\label{Idn}
	\partial_{t}\IM (\overline{u}\partial_k u+\gamma\overline{v} \partial_k v)&= 
	\frac{1}{2}\partial_{k}\Delta (|u|^{2}+|v|^{2}) -2\partial_{j}\RE(\partial_j\overline{u} \partial_k u+\partial_j\overline{v} \partial_kv)+2\partial_{k}N(u,v),
	\end{align}
	where $N$ is as in \eqref{defi-N}.	In particular, we have
	\begin{align*}
	\partial_{t}(|u|^{2}+\gamma^{2}|v|^{2})&=-2\nabla\cdot\IM (\overline{u}\nabla u)
	-2\gamma\nabla\cdot\IM (\overline{v}\nabla v)+\frac{2}{3}\(1-\frac{\gamma}{3}\)\IM (u^{3}\overline{v}),\\
	\partial_{t}(|u|^{2}+3\gamma|v|^{2})&=-2\nabla\cdot\IM (\overline{u}\nabla u)
	-6\nabla\cdot\IM (\overline{v}\nabla v).
	\end{align*}
\end{lemma}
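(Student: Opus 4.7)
The plan is to establish both identities by direct computation, substituting the equations \eqref{SNLS} and exploiting the Hamiltonian structure with potential $N$.

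For the mass-type identity \eqref{Idg}, I would first compute $\partial_t |u|^2 = 2\RE(\overline u\,\partial_t u)$ after rewriting the first equation of \eqref{SNLS} as $\partial_t u = i\Delta u - iu + i(\tfrac{1}{9}|u|^2 + 2|v|^2) u + \tfrac{i}{3}\overline u^2 v$. Using $\RE(iz) = -\IM(z)$, the linear term $-iu$ and the self-cubic term contribute real quantities to $\overline u\,\partial_t u$ and hence drop out, leaving
\[
\partial_t|u|^2 = -2\nabla\cdot\IM(\overline u\nabla u) + \tfrac{2}{3}\IM(u^3 \overline v),
\]
where I have used $\IM(\overline u\Delta u) = \nabla\cdot\IM(\overline u\nabla u)$ and $\IM(\overline u^3 v) = -\IM(u^3 \overline v)$. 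The analogous calculation on the $v$-equation yields $\gamma\,\partial_t|v|^2 = -2\nabla\cdot\IM(\overline v \nabla v) - \tfrac{2}{9}\IM(u^3 \overline v)$. Forming the combination $\partial_t|u|^2 + \beta\cdot\gamma\,\partial_t|v|^2$ gives \eqref{Idg}; the two stated special cases follow by $\beta = \gamma$ and $\beta = 3$, the latter choice annihilating the cross term (which is the algebraic source of the mass-resonance conservation).

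For the momentum-type identity \eqref{Idn}, I would exploit the Hamiltonian form of \eqref{SNLS}, namely
\[
i\partial_t u = -\Delta u + u - 2\partial_{\overline u} N, \qquad i\gamma\,\partial_t v = -\Delta v + \mu v - 2\partial_{\overline v} N,
\]
which one verifies directly from \eqref{defi-N}. Expanding $\partial_t\IM(\overline u\,\partial_k u) = \IM(\partial_t\overline u\,\partial_k u) + \IM(\overline u\,\partial_k\partial_t u)$ and the analog for $v$ (rescaled by $\gamma$), the lower-order potential contributions $\pm\RE(\overline u\,\partial_k u)$ and $\pm(\mu/\gamma)\RE(\overline v\,\partial_k v)$ cancel pairwise, while the Laplacian terms assemble into $\RE(\overline w\,\partial_k\Delta w) - \RE(\Delta\overline w\,\partial_k w)$ for $w\in\{u,v\}$. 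A short calculus manipulation, expanding $\tfrac12\partial_k\Delta|w|^2 = \RE(\partial_k\overline w\,\Delta w) + \RE(\overline w\,\partial_k\Delta w) + 2\RE(\partial_j\overline w\,\partial_j\partial_k w)$ together with $\partial_j\RE(\partial_j\overline w\,\partial_k w) = \RE(\Delta\overline w\,\partial_k w) + \RE(\partial_j\overline w\,\partial_j\partial_k w)$ and the observation $\RE(\partial_k\overline w\,\Delta w) = \RE(\Delta\overline w\,\partial_k w)$, identifies this quantity with $\tfrac12\partial_k\Delta|w|^2 - 2\partial_j\RE(\partial_j\overline w\,\partial_k w)$. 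Summing over $w$ produces the linear part of \eqref{Idn}.

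It remains to handle the nonlinear contribution $2\sum_{w\in\{u,v\}}\bigl[\RE(\overline w\,\partial_k\partial_{\overline w}N) - \RE(\partial_w N\cdot\partial_k w)\bigr]$. Applying the product rule $\RE(\overline w\,\partial_k\partial_{\overline w}N) = \partial_k\RE(\overline w\,\partial_{\overline w}N) - \RE(\partial_w N\cdot\partial_k w)$ (using that $N$ real forces $\overline{\partial_{\overline w}N} = \partial_w N$) together with the chain rule $\partial_k N = 2\RE(\partial_u N\cdot\partial_k u) + 2\RE(\partial_v N\cdot\partial_k v)$, this collapses to $2\partial_k[\RE(\overline u\,\partial_{\overline u}N) + \RE(\overline v\,\partial_{\overline v}N)] - 2\partial_k N$. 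Since every monomial in $N$ is homogeneous of total degree $4$ in $(u,\overline u, v,\overline v)$, Euler's identity yields $\RE(\overline u\,\partial_{\overline u}N) + \RE(\overline v\,\partial_{\overline v}N) = 2N$, and the nonlinear contribution reduces precisely to $2\partial_k N$, completing \eqref{Idn}. The main obstacle is the coupling induced by the cross term $\tfrac{1}{9}\RE(\overline u^3 v)$, which mixes $u$ and $v$ in both equations; the potential/Euler-homogeneity perspective is the cleanest way to absorb this coupling without term-by-term bookkeeping.
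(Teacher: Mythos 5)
Your proof is correct. For the first identity \eqref{Idg} your computation is essentially the paper's: pair the equations with $(\overline u,\beta\overline v)$, observe that the self-interaction and linear terms contribute only real quantities to $\overline u\,\partial_t u$ and $\overline v\,\partial_t v$, and track the cross term $\frac13\overline u^2v$, $\frac19 u^3$ to produce the factor $\frac23(1-\frac\beta3)\IM(u^3\overline v)$; the special cases $\beta=\gamma$ and $\beta=3$ are read off identically. For the momentum identity \eqref{Idn} the linear part of your argument (cancellation of the zeroth-order terms, and the manipulation identifying $\RE(\overline w\,\partial_k\Delta w)-\RE(\Delta\overline w\,\partial_k w)$ with $\frac12\partial_k\Delta|w|^2-2\partial_j\RE(\partial_j\overline w\,\partial_k w)$) coincides with the paper's, but you handle the nonlinear contribution differently. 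The paper splits the right-hand sides as $H=H_1+H_2+H_3$, $G=G_1+G_2+G_3$ and verifies monomial by monomial that $(2\RE(\overline H\partial_k u)-\partial_k\RE(\overline H u))+(2\RE(\overline G\partial_k v)-\partial_k\RE(\overline G v))=2\partial_k N$, which requires separate bookkeeping for the self-cubic and the cross terms. You instead recognize the Hamiltonian form $i\partial_tu=-\Delta u+u-2\partial_{\overline u}N$, $i\gamma\partial_tv=-\Delta v+\mu v-2\partial_{\overline v}N$ (which is easily checked from \eqref{defi-N}) and reduce the whole nonlinear contribution to $2\partial_k\bigl[\RE(\overline u\,\partial_{\overline u}N)+\RE(\overline v\,\partial_{\overline v}N)\bigr]-2\partial_kN=2\partial_kN$ via Euler's identity for the degree-$4$ homogeneous density $N$. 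This is a cleaner and more robust route: it absorbs the $u$--$v$ coupling automatically and would carry over verbatim to any system driven by a real homogeneous potential, whereas the paper's term-by-term check is more elementary but specific to the given nonlinearity. Both arguments are complete and yield the same conclusion.
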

\begin{proof}
See Appendix \ref{sec:app:B}.
\end{proof}

A direct consequence of Lemma \ref{Imporide} is the following localized virial identity related to \eqref{SNLS}.

\begin{lemma} \label{lem-viri-iden}
	Let $\mu,\gamma >0$, and $\varphi: \R^3 \rightarrow \R$ be a sufficiently smooth and decaying function. Let $(u,v)$ be a $H^1$-solution to \eqref{SNLS} defined on the maximal time interval $(-T_-,T_+)$. Define
	\begin{align} \label{defi-M-varphi}
	\Mcal_\varphi(t):= 2 \ima \int \nabla \varphi(x) \cdot (\nabla u\overline{u}+\gamma\nabla v \overline{v})(t,x) dx.
	\end{align}
	Then we have for all $t\in (-T_-,T_+)$, 
	\begin{align*} 
	\frac{d}{dt}\Mcal_\varphi(t)&=- \int \Delta^2 \varphi(x) ( |u|^2 + |v|^2 )(t,x) dx + 4\rea\int \partial^2_{jk}\varphi(x)(\partial_j\overline{u} \partial_k u+ \partial_j\overline{v} \partial_k v)(t,x)dx  \\
	&-4\int \Delta \varphi(x)N(u,v)(t,x) dx.
	\end{align*}
\end{lemma}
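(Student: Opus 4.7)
The proof will be a direct computation using the pointwise identity \eqref{Idn} from Lemma \ref{Imporide}. The plan is as follows.

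First I would rewrite the Morawetz quantity in the divergence-free-looking form
\[
\Mcal_\varphi(t) = 2\int \partial_k \varphi(x)\, \ima\!\left(\overline{u}\,\partial_k u + \gamma\,\overline{v}\,\partial_k v\right)(t,x)\,dx
\]
(Einstein convention on $k$), so that the time derivative acts precisely on the quantity whose evolution is described by \eqref{Idn}. Differentiating under the integral sign (which will be legitimate for smooth, decaying approximations of $(u,v)$; the general $H^1$-case is recovered by the standard regularization procedure of \cite{Cazenave}, exactly as was used to justify the conservation laws) gives
\[
\frac{d}{dt}\Mcal_\varphi(t) = 2\int \partial_k\varphi(x)\, \partial_t\!\left[\ima(\overline{u}\partial_k u + \gamma\overline{v}\partial_k v)\right]\!(t,x)\,dx.
\]

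Next I would plug in the identity \eqref{Idn} to split the right-hand side into three pieces:
\[
\frac{d}{dt}\Mcal_\varphi(t) = \underbrace{\int \partial_k\varphi\, \partial_k \Delta(|u|^2+|v|^2)\,dx}_{\rm (I)} \;-\; \underbrace{4\int \partial_k\varphi\,\partial_j\rea(\partial_j\overline{u}\partial_k u + \partial_j\overline{v}\partial_k v)\,dx}_{\rm (II)} \;+\; \underbrace{4\int \partial_k\varphi\,\partial_k N(u,v)\,dx}_{\rm (III)}.
\]
Each of these is transformed by integration by parts: in (I) I move one derivative off $\Delta(|u|^2+|v|^2)$ and then another, producing $-\int \Delta^2\varphi\,(|u|^2+|v|^2)\,dx$; in (II) I integrate $\partial_j$ by parts onto $\partial_k\varphi$, yielding $+4\int \partial^2_{jk}\varphi\,\rea(\partial_j\overline{u}\partial_k u+\partial_j\overline{v}\partial_k v)\,dx$; and in (III) I integrate $\partial_k$ by parts onto $\partial_k\varphi$, producing $-4\int \Delta\varphi\, N(u,v)\,dx$. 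Summing the three contributions yields exactly the asserted identity.

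The only conceptually delicate point is the justification of the differentiation under the integral sign and of the integrations by parts: the solution is merely $H^1$ in space, so the pointwise identity \eqref{Idn} has to be understood in a distributional sense. The standard remedy, which I expect to be the main (though routine) obstacle, is to regularize both the initial data and the nonlinearity so that the approximants are classical, apply the computation above to them, and then pass to the limit using the continuous dependence of \eqref{SNLS} in $H^1\times H^1$ together with the decay and smoothness of $\varphi$ (so that $\Delta^2\varphi$, $\partial^2_{jk}\varphi$ and $\Delta\varphi$ are all bounded). Once this approximation argument is in place, the algebraic identity stated in the lemma follows.
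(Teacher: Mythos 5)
Your proposal is correct and coincides with the paper's (omitted) argument: the paper presents Lemma \ref{lem-viri-iden} as a direct consequence of Lemma \ref{Imporide}, which is exactly your computation of inserting \eqref{Idn} into $\frac{d}{dt}\Mcal_\varphi$ and integrating by parts, with all coefficients and signs correctly tracked. The regularization remark handling the $H^1$-level justification is the standard one the paper itself invokes for the conservation laws.
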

The following Corollary is easy to get.
\begin{corollary} \label{rem-viri-iden}
Recall the definition of $G,N,P$ in \eqref{defi-G}, \eqref{defi-N}, and \eqref{defi-P}, respectively.
\begin{itemize}[leftmargin=5mm]
		\item[(i)] If $\varphi(x) = |x|^2$, 	
		\begin{equation}\label{eq:variance}
		\frac{d}{dt} \Mcal_{|x|^2}(t) = 8 G(u(t),v(t)).
		\end{equation}
		\item[(ii)] If $\varphi$ is radially symmetric, by denoting $|x|=r,$ we have		
		\begin{equation}\label{cor:ii}
		\begin{aligned}
		\frac{d}{dt} \Mcal_\varphi(t) &= -\int \Delta^2 \varphi(x) (|u|^2 +  |v|^2)(t,x) dx + 4\int \frac{\varphi'(r)}{r} (|\nabla u|^2 + |\nabla v|^2 )(t,x) dx \\
		 &+ 4 \int \left(\frac{\varphi''(r)}{r^2} - \frac{\varphi'(r)}{r^3} \right) (|x\cdot \nabla u|^2 +  |x\cdot \nabla  v|^2 )(t,x) dx \\
		&-4\int \Delta \varphi(x) N(u,v)(t,x)dx. 
		\end{aligned}
		\end{equation}
		\item[(iii)] If $\varphi$ is radial and $(u,v)$ is also radial, then 
		\begin{equation}\label{cor:iii}
		\begin{aligned}
		\frac{d}{dt} \Mcal_\varphi(t) &= -\int \Delta^2 \varphi(x) (|u|^2 + |v|^2)(t,x) dx  + 4 \int \varphi''(r) (|\nabla u|^2 + |\nabla v|^2)(t,x) dx \\
		&- 4\int \Delta \varphi(x) N(u,v)(t,x)dx. 
		\end{aligned}
		\end{equation}
		\item[(iv)] Denote $x=(y,z)$ with $y=(x_1, x_2) \in \R^2$ and $z \in \R$. Let $\psi: \R^2 \rightarrow \R$ be a sufficiently smooth and decaying function. Set $\varphi(x) = \psi(y) + z^2$. If $(u(t),v(t)) \in \Sigma_3 \times \Sigma_3$ for all $t\in (-T_-,T_+)$, then we have
		\begin{equation}\label{cor:iv}
		\begin{aligned}
		\frac{d}{dt} \Mcal_\varphi(t) &= -\int \Delta^2_y \psi(y) (|u|^2 +  |v|^2)(t,x) dx + 4\int \psi''(\rho) (|\nabla_y u|^2 +  |\nabla_y v|^2)(t,x) dx \\
		& + 8 \left(\|\partial_z u(t)\|^2_{L^2} + \|\partial_z v(t)\|^2_{L^2}\right) - 8 P(u(t),v(t))  -4\int \Delta_y \psi(y) N(u,v)(t,x)dx,
		\end{aligned}
		\end{equation}
		where $\rho = |y|.$ 
	\end{itemize}
\end{corollary}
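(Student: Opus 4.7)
The corollary is a direct specialization of the master identity in Lemma \ref{lem-viri-iden}, so the plan is simply to substitute the specific weights $\varphi$ into
\[
\frac{d}{dt}\Mcal_\varphi(t)=-\int \Delta^2\varphi\,(|u|^2+|v|^2)+4\rea\int \partial^2_{jk}\varphi\,(\partial_j\overline u\partial_k u+\partial_j\overline v\partial_k v)-4\int\Delta\varphi\, N(u,v),
\]
and simplify using the chain rule and, where applicable, the radial structure. No new ingredient beyond Lemma \ref{lem-viri-iden} is needed.

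For part (i), take $\varphi(x)=|x|^2$. Then $\Delta^2\varphi=0$, $\partial^2_{jk}\varphi=2\delta_{jk}$, and $\Delta\varphi=6$, so the right-hand side reduces to $8K(u,v)-24P(u,v)=8G(u,v)$, proving \eqref{eq:variance}. For part (ii), I would use the standard formulas for a radial weight: $\partial_k\varphi=\varphi'(r)x_k/r$ and
\[
\partial^2_{jk}\varphi=\varphi''(r)\frac{x_jx_k}{r^2}+\frac{\varphi'(r)}{r}\Bigl(\delta_{jk}-\frac{x_jx_k}{r^2}\Bigr).
\]
Contracting with $\partial_j\overline u\,\partial_k u$ (and similarly for $v$) and summing gives
\[
\partial^2_{jk}\varphi\,\partial_j\overline u\,\partial_k u=\Bigl(\frac{\varphi''(r)}{r^2}-\frac{\varphi'(r)}{r^3}\Bigr)|x\cdot\nabla u|^2+\frac{\varphi'(r)}{r}|\nabla u|^2.
\]
Inserting this into the master identity yields exactly \eqref{cor:ii}.

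For part (iii), assume additionally that $(u,v)$ is radial, so $\nabla u=u'(r)x/r$ and hence $|x\cdot\nabla u|^2=r^2|\nabla u|^2$; the same holds for $v$. Substituting this into \eqref{cor:ii} collapses the two gradient contributions into
\[
4\int\Bigl[\frac{\varphi'(r)}{r}+\varphi''(r)-\frac{\varphi'(r)}{r}\Bigr](|\nabla u|^2+|\nabla v|^2)=4\int \varphi''(r)(|\nabla u|^2+|\nabla v|^2),
\]
which is \eqref{cor:iii}. Part (iv) is proved by the same mechanism applied separately in the $y$- and $z$-variables. Writing $x=(y,z)$ and $\varphi(x)=\psi(y)+z^2$, one has $\Delta\varphi=\Delta_y\psi+2$, $\Delta^2\varphi=\Delta_y^2\psi$, and the Hessian is block-diagonal: in the $y\times y$ block it equals $\partial^2_{jk}\psi$, while $\partial^2_{33}\varphi=2$ with all mixed entries vanishing. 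The $z$-block contribution is
\[
4\rea\int 2\,(|\partial_z u|^2+|\partial_z v|^2)=8(\|\partial_z u\|_{L^2}^2+\|\partial_z v\|_{L^2}^2),
\]
while the constant $2$ appearing in $\Delta\varphi$ produces the $-8P(u,v)$ term. For the remaining $y$-block, I use the hypothesis $(u(t),v(t))\in\Sigma_3\times\Sigma_3$, i.e.\ radial symmetry in the $y$-variable, together with the radiality of $\psi$, to apply the same reduction as in (iii) in dimension two; this turns the $y$-Hessian contraction into $4\int\psi''(\rho)(|\nabla_y u|^2+|\nabla_y v|^2)$, yielding \eqref{cor:iv}.

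The computations are routine; the only mild subtlety is the cancellation in (iii), and making sure in (iv) that the radiality of $u,v$ in the $y$-variable is precisely what allows the two-dimensional version of that cancellation. No obstacle of substance is expected, so the proof can simply be stated as a direct specialization of Lemma \ref{lem-viri-iden}.
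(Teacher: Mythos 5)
Your proposal is correct and coincides with the paper's own argument: both obtain the four identities by specializing the master identity of Lemma \ref{lem-viri-iden} to the given weights, using the standard Hessian formula for radial functions in (ii), the reduction $|x\cdot\nabla u|^2=r^2|\nabla u|^2$ for radial $u$ in (iii), and the block-diagonal splitting of the Hessian of $\psi(y)+z^2$ together with radiality in $y$ in (iv). No discrepancy to report.
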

\begin{proof}
See Appendix \ref{sec:app:B}.
\end{proof}
We now aim to construct precise localization functions that we will use to get the desired main results of the paper. Let $\zeta: [0,\infty) \rightarrow [0,2]$ be a smooth function satisfying
\[
\zeta(r):= \left\{
\begin{array}{ccl}
2 &\text{if}& 0 \leq r \leq 1, \\
0 &\text{if}& r\geq 2.
\end{array}
\right.
\]
We define the function $\vartheta:[0,\infty) \rightarrow [0, \infty)$ by
\begin{align} \label{defi-vartheta}
\vartheta(r):= \int_0^r \int_0^\tau \zeta(s) ds d\tau.
\end{align}
For $R>0$, we define the radial function $\varphi_R: \R^3 \rightarrow \R$ by
\begin{align} \label{defi-varphi-R}
\varphi_R(x)=\varphi_R(r):= R^2 \vartheta(r/R), \quad r=|x|.
\end{align}
We readily check that, $ \forall x \in \R^3$ and $\forall r\geq 0,$
\[
2\geq \varphi''_R(r) \geq 0, \quad 2-\frac{\varphi'_R(r)}{r} \geq 0, \quad 6-\Delta \varphi_R(x) \geq 0.  
\]
We are ready to state the first virial estimate for radially symmetric solutions. 
 
\begin{lemma} \label{lem-viri-est-rad}
	Let $\mu,\gamma >0$. Let $(u,v)$ be a radial $H^1$-solution to \eqref{SNLS} defined on the maximal time interval $(-T_-,T_+)$. Let $\varphi_R$ be as in \eqref{defi-varphi-R} and denote $\Mcal_{\varphi_R}(t)$ as in \eqref{defi-M-varphi}. Then we have for all $t\in (-T_-,T_+)$,
	\begin{align} \label{viri-est-rad}
	\frac{d}{dt} \Mcal_{\varphi_R}(t) \leq 4 G(u(t),v(t))  + C R^{-2} K(u(t),v(t)) + CR^{-2}
	\end{align}
	for some constant $C>0$ depending only on $\mu, \gamma$, and $M_{3\gamma}(u_0,v_0)$, where $G$ is as in \eqref{defi-G}.
\end{lemma}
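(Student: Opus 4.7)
Since $(u,v)$ is radial and $\varphi_R$ is radial, the natural starting point is Corollary \ref{rem-viri-iden}(iii), which gives the closed form
\[
\frac{d}{dt}\Mcal_{\varphi_R}(t) = -\int \Delta^2\varphi_R\,(|u|^2+|v|^2)\,dx + 4\int \varphi_R''(r)(|\nabla u|^2+|\nabla v|^2)\,dx - 4\int \Delta\varphi_R\, N(u,v)\,dx.
\]
The idea is to add and subtract the ``full'' virial density ($\varphi_R''\equiv 2$, $\Delta\varphi_R\equiv 6$, which corresponds to $\varphi(x)=|x|^2$) so as to isolate the main term $8G(u,v)$ (cf.\ Corollary \ref{rem-viri-iden}(i)) and collect three explicit remainders: a biharmonic one, a gradient one supported in $\{|x|>R\}$, and a nonlinear one also supported in $\{|x|>R\}$.

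\textbf{Easy errors.} From the construction of $\varphi_R$ in \eqref{defi-varphi-R}, one has $\|\Delta^2\varphi_R\|_{L^\infty}\lesssim R^{-2}$ with support in the annulus $\{R\le|x|\le 2R\}$, so the biharmonic contribution is bounded by $CR^{-2}M_{3\gamma}(u_0,v_0)\lesssim R^{-2}$ by mass conservation. The gradient remainder $4\int(\varphi_R''-2)(|\nabla u|^2+|\nabla v|^2)\,dx$ is non-positive because $\varphi_R''\le 2$ pointwise, and therefore can simply be discarded.

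\textbf{Main obstacle: the nonlinear remainder.} The delicate point is the term $4\int(6-\Delta\varphi_R)\,N(u,v)\,dx$, which is not signed because $N$ contains the (non-positive in general) cross term $\frac{1}{9}\rea(\overline{u}^3 v)$. The plan is to bound $|N(u,v)|\lesssim |u|^4+|v|^4$ (via $|\overline{u}^3 v|\le |u|^3|v|$ and Young), reducing matters to controlling $\int_{|x|>R}(|u|^4+|v|^4)\,dx$. For this I would invoke the three-dimensional radial Sobolev embedding
\[
\|f\|_{L^\infty(|x|>R)} \lesssim R^{-1}\|f\|_{L^2}^{1/2}\|\nabla f\|_{L^2}^{1/2}, \qquad f\in H^1_{\mathrm{rad}}(\R^3),
\]
which yields $\int_{|x|>R}|f|^4\,dx \le \|f\|_{L^\infty(|x|>R)}^2\|f\|_{L^2}^2 \lesssim R^{-2}\|f\|_{L^2}^3\|\nabla f\|_{L^2}$. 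Applying this to both $u$ and $v$, using mass conservation to absorb the $\|f\|_{L^2}$ factors into the constant, and finally Young's inequality $ab\le \tfrac{1}{2}a^2+\tfrac{1}{2}b^2$ to separate $\|\nabla f\|_{L^2}$ from the rest, gives a bound of the form $CR^{-2}(1+K(u,v))$ for this remainder.

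\textbf{Conclusion.} Combining the three estimates produces $\tfrac{d}{dt}\Mcal_{\varphi_R}(t) \le 8G(u,v) + CR^{-2}K(u,v) + CR^{-2}$; the factor $4$ in the statement is then reached by noting that the excess $4G$ satisfies $4G\le 4K$, which can be absorbed into the $CR^{-2}K$ term after a harmless adjustment of $R$ (alternatively one splits $8G = 4G + 4(K-3P)$ and controls the second piece directly via the same radial Sobolev bound). The genuinely hard step is the nonlinear remainder, where the absence of a definite sign for $N$ forces one to rely on the $R^{-1}$ spatial decay provided by the radial Sobolev inequality in dimension three, which is precisely why a radiality assumption enters the statement of Lemma \ref{lem-viri-est-rad}.
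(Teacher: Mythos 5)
Your main argument coincides with the paper's proof step for step: you start from Corollary \ref{rem-viri-iden}(iii), complete to $8G$ via $G=K-3P$, bound the biharmonic term by $CR^{-2}$ using mass conservation, discard the non-positive gradient remainder since $\varphi_R''\le 2$, and control the nonlinear tail on $\{|x|\ge R\}$ through $|N(u,v)|\lesssim|u|^4+|v|^4$ and the radial Sobolev embedding \eqref{rad-sobo}, arriving at
\[
\frac{d}{dt}\Mcal_{\varphi_R}(t)\le 8G(u(t),v(t))+CR^{-2}K(u(t),v(t))+CR^{-2},
\]
which is exactly where the paper's proof stops as well.

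The only problematic part is your final attempt to replace $8G$ by $4G$. Writing $8G=4G+4G$ and invoking $4G\le 4K$ does not let you absorb the excess into $CR^{-2}K$: the quantity $4K$ carries a constant of order one, while $R$ is large, so $4K$ is not dominated by $CR^{-2}K$ for any admissible choice of $R$. The alternative you sketch, controlling $4(K-3P)$ by the radial Sobolev bound, fails for the same reason: $K-3P$ is a global quantity, not localized to $\{|x|>R\}$, so there is no spatial decay to exploit. Indeed, no argument of this type can yield the coefficient $4$ for all $t$ without a sign assumption on $G$, since the identity \eqref{cor:iii} genuinely produces the coefficient $8$ and the correction terms cannot compensate a full multiple of $K$. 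Note, however, that the paper itself only establishes the $8G$ version, and that is the version actually invoked later (see \eqref{viri-est-rad-app-1} in the proof of Theorem \ref{theo-blow}); the coefficient $4$ in the statement appears to be a typo, harmless in the application where $G\le 0$ so that $8G\le 4G$. Your proof is therefore correct and complete for the inequality that is actually needed; simply delete the last reconciliation step.
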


\begin{proof}
	By \eqref{cor:iii}, we have for all $t\in (-T_-,T_+)$,
	\begin{align*}
	\frac{d}{dt} \Mcal_{\varphi_R}(t) &= -\int \Delta^2 \varphi_R(x) (|u|^2 + |v|^2)(t,x) dx  + 4 \int_{\R^3} \varphi''_R(r) (|\nabla u|^2 + |\nabla v|^2)(t,x) dx \\
	&-4\int \Delta \varphi_R(x) N(u,v)(t,x)dx. 
	\end{align*}
	We rewrite, using $G-K+3P=0,$
	\begin{align*}
	\frac{d}{dt} \Mcal_{\varphi_R}(t)&= 8 G(u(t),v(t)) - 8 K(u(t),v(t)) + 24 P(u(t),v(t)) \\
	&-\int \Delta^2 \varphi_R(x) (|u|^2 + |v|^2)(t,x) dx + 4 \int \varphi''_R(r) (|\nabla u|^2 + |\nabla v|^2)(t,x) dx \\
	& -4\int \Delta \varphi_R(x) N(u,v)(t,x)dx \\
	&= 8 G(u(t),v(t)) -\int \Delta^2 \varphi_R(x) (|u|^2 + |v|^2)(t,x) dx \\
	& - 4 \int (2-\varphi''_R(r)) (|\nabla u|^2 + |\nabla v|^2)(t,x) dx  + 4 \int (6-\Delta \varphi_R(x)) N(u,v)(t,x)dx.
	\end{align*}
	As $\|\Delta^2 \varphi_R\|_{L^\infty} \lesssim R^{-2}$, the conservation of mass implies that
	\[
	\left| \int_{\R^3} \Delta^2 \varphi_R(x) (|u|^2 +|v|^2)(t,x) dx \right| \lesssim R^{-2}.
	\]
	The latter, together with $\varphi''_R (r) \leq 2$ for all $r\geq0$, $\|\Delta \varphi_R\|_{L^\infty} \lesssim 1$, $\varphi_R(x) = |x|^2$ on $|x| \leq R$, and H\"older's inequality, yield
	\[
	\frac{d}{dt}\Mcal_{\varphi_R}(t) \leq 8 G(u(t),v(t)) + CR^{-2} + C\int_{|x|\geq R} |u(t,x)|^4 + |v(t,x)|^4dx,
	\]
	where we have used the fact that (see \eqref{defi-N})
	\[
	|N(u,v)| \lesssim |u|^4 + |v|^4.
	\]
	To estimate the last term, we recall the following radial Sobolev embedding (see e.g., \cite{CO}): for a  radial function $f\in H^1(\R^3)$, we have  
	\begin{align} \label{rad-sobo}
	\sup_{x \ne 0} |x| |f(x)| \leq C\|\nabla f\|^{\frac{1}{2}}_{L^2} \|f\|^{\frac{1}{2}}_{L^2}.
	\end{align}
	Thanks to \eqref{rad-sobo} and the conservation of mass, we estimate
	\begin{align*}
	\int_{|x|\geq R} |u(t,x)|^4 dx &\leq \sup_{|x|\geq R} |u(t,x)|^2 \|u(t)\|^2_{L^2}\\
	&\lesssim R^{-2} \sup_{|x|\geq R} \left(|x||u(t,x)|\right)^2 \|u(t)\|^2_{L^2}\\
	&\lesssim R^{-2} \|\nabla u(t)\|_{L^2} \|u(t)\|^3_{L^2} \\
	&\lesssim R^{-2} \|\nabla u(t)\|_{L^2} \\
	&\lesssim R^{-2} \left(\|\nabla u(t)\|^2_{L^2} +1\right).
	\end{align*}
	It follows that
	\[
	\frac{d}{dt} \Mcal_{\varphi_R}(t) \leq 8 G(u(t),v(t)) + CR^{-2} + CR^{-2} \left(\|\nabla u(t)\|^2_{L^2}+\|\nabla v(t)\|^2_{L^2}\right).
	\]
	The proof is complete.
\end{proof}

Next we derive localized virial estimates for cylindrically symmetric solutions (we also mention here \cite{BF20, BFG20, Mar, DF, Inui1, Inui2}, for the qualitative analysis of dispersive-type equations in anisotropic spaces). 
To this end, we introduce
\begin{align} \label{defi-psi-R}
\psi_R(y)= \psi_R(\rho) := R^2 \zeta(\rho/R), \quad \rho =|y|
\end{align}
and set
\begin{align} \label{defi-varphi-R-psi}
\varphi_R(x):= \psi_R(y) + z^2.
\end{align}

\begin{lemma} \label{lem-viri-est-cyli}
	Let $\mu,\gamma>0$. Let $(u,v)$ be a $\Sigma_3$-solution to \eqref{SNLS} defined on the maximal time interval $(-T_-,T_+)$. Let $\varphi_R$ be as in \eqref{defi-varphi-R-psi} and denote $\Mcal_{\varphi_R}(t)$ as in \eqref{defi-M-varphi}. Then we have for all $t\in (-T_-,T_+)$,
	\begin{align} \label{viri-est-cyli}
	\frac{d}{dt} \Mcal_{\varphi_R}(t) \leq 8 G(u(t),v(t)) + CR^{-1} K(u(t),v(t)) + CR^{-2}
	\end{align}
	for some constant $C>0$ depending only on $\mu,\gamma$, and $M(u_0,v_0)$.
\end{lemma}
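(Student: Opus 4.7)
The plan is to apply part (iv) of Corollary \ref{rem-viri-iden} with $\psi = \psi_R$, then massage the resulting identity into the form $8G + \text{error}$. Expanding $G = K - 3P$ in its cylindrical decomposition
\[
K(u,v) = \|\nabla_y u\|_{L^2}^2 + \|\partial_z u\|_{L^2}^2 + \|\nabla_y v\|_{L^2}^2 + \|\partial_z v\|_{L^2}^2,
\]
I would add and subtract suitable multiples of the individual components to rewrite
\begin{align*}
\frac{d}{dt}\Mcal_{\varphi_R}(t) = 8G(u(t),v(t)) &- 4\int\bigl(2-\psi_R''(\rho)\bigr)\bigl(|\nabla_y u|^2+|\nabla_y v|^2\bigr)dx \\
&+ 4\int\bigl(4-\Delta_y\psi_R(y)\bigr)N(u,v)\,dx - \int\Delta_y^2\psi_R(y)(|u|^2+|v|^2)\,dx.
\end{align*}
The first correction is nonpositive thanks to the defining properties of $\psi_R$ ($\psi_R''\leq 2$), and the last is $\Oc(R^{-2})$ by mass conservation together with $\|\Delta_y^2\psi_R\|_{L^\infty}\lesssim R^{-2}$.

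The real content is the middle term. The weight $4-\Delta_y\psi_R$ is bounded and supported in $\{|y|\geq R\}$, and $|N(u,v)|\lesssim |u|^4+|v|^4$ from \eqref{defi-N}, so the matter boils down to estimating $\int_{|y|\geq R}(|u|^4+|v|^4)\,dx$. Here I would exploit the cylindrical symmetry through a slicewise two-dimensional radial Sobolev embedding: for each $z\in\R$, the function $u(\cdot,z)$ is radial on $\R^2$, hence
\[
|y|\,|u(y,z)|^2 \lesssim \|u(\cdot,z)\|_{L^2_y}\,\|\nabla_y u(\cdot,z)\|_{L^2_y}, \qquad |y|\geq 1.
\]
This gives, for $|y|\geq R$,
\[
\int_{|y|\geq R}|u|^4\,dx \lesssim R^{-1}\int_\R \|u(\cdot,z)\|_{L^2_y}^3\,\|\nabla_y u(\cdot,z)\|_{L^2_y}\,dz.
\]
To collapse the $z$-integral I would apply the one-dimensional Sobolev embedding $\|g\|_{L^\infty_z}^2\lesssim \|g\|_{L^2_z}\|g'\|_{L^2_z}$ with $g(z) = \|u(\cdot,z)\|_{L^2_y}$, noting that $|g'(z)| \leq \|\partial_z u(\cdot,z)\|_{L^2_y}$ by differentiation under the integral; combined with Cauchy--Schwarz and mass conservation this yields
\[
\int_{|y|\geq R}|u|^4\,dx \lesssim R^{-1}\|u\|_{L^2}^2 \|\nabla_y u\|_{L^2}\|\partial_z u\|_{L^2} \lesssim R^{-1} K(u,v),
\]
and symmetrically for $v$. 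Substituting everything back produces \eqref{viri-est-cyli}.

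The main obstacle is precisely this final chain of inequalities. The three-dimensional radial Strauss bound \eqref{rad-sobo} exploited in Lemma \ref{lem-viri-est-rad} delivers full $|x|^{-1}$ pointwise decay, which is what produced the favorable $R^{-2}$ coefficient on $K$; in the cylindrical setting the symmetry is only two-dimensional in $y$, so the Strauss-type decay is merely $|y|^{-1/2}$. To recover an integrable estimate I must trade part of that missing spatial decay for a one-dimensional Sobolev embedding in $z$, which is available only because $\Sigma_3\times\Sigma_3$-data are differentiable in $z$ with $\partial_z u,\partial_z v\in L^2$. The price of this substitution is the degraded coefficient $R^{-1}$ in place of $R^{-2}$, and keeping track of the various endpoint exponents so that the Cauchy--Schwarz step closes without losing further powers of $R$ is the delicate bookkeeping step.
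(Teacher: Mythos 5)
Your proposal is correct and follows essentially the same route as the paper: the identical decomposition of the identity from Corollary \ref{rem-viri-iden}(iv) into $8G$ plus a sign-definite term, an $\Oc(R^{-2})$ term, and the tail $\int_{|y|\geq R}(|u|^4+|v|^4)\,dx$, which is then controlled by combining the two-dimensional radial Strauss embedding in $y$ with the one-dimensional Agmon-type bound $\sup_z\|u(\cdot,z)\|_{L^2_y}^2\lesssim \|u\|_{L^2}\|\partial_z u\|_{L^2}$ and mass conservation, yielding the $R^{-1}K$ loss. The only difference from the paper's write-up is the order in which the two embeddings are applied, which is immaterial.
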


\begin{proof}
	By \eqref{cor:iv}, we have for all $t\in (-T_-,T_+)$,
	\begin{align*}
	\frac{d}{dt} \Mcal_{\varphi_R}(t) &= -\int \Delta^2_y \psi_R(y) (|u|^2 +  |v|^2)(t,x) dx + 4\int_{\R^3} \psi''_R(\rho) (|\nabla_y u|^2 +  |\nabla_y v|^2)(t,x) dx \\
	& + 8 \left(\|\partial_z u(t)\|^2_{L^2} + \|\partial_z v(t)\|^2_{L^2}\right) - 8 P(u(t),v(t))  -4\int \Delta_y \psi_R(y) N(u,v)(t,x)dx,
	\end{align*}
	where $\rho=|y|$. It follows that
	\begin{align*}
	\frac{d}{dt} \Mcal_{\varphi_R}(t) &\leq 8 G(u(t),v(t)) +CR^{-2} - 4 \int (2-\psi''_R(\rho)) ( |\nabla_y u|^2 +  |\nabla_y v|^2)(t,x) dx \\
	&+4 \rea \int (4-\Delta_y\psi_R(y)) N(u,v)(t,x)dx.
	\end{align*}
	As $\psi''_R(\rho) \leq 2$ and $\|\Delta_y \psi_R\|_{L^\infty} \lesssim 1$, the H\"older's inequality implies that
	\begin{align} \label{est-cyli}
	\frac{d}{dt} \Mcal_{\varphi_R}(t) \leq 8 G(u(t),v(t)) + CR^{-2} + C \int_{|y|\geq R} |u(t,x)|^4 + |v(t,x)|^4 dx.
	\end{align}
	We estimate
	\begin{align*}
	\int_{|y|\geq R} |u(t,x)|^4 dx &\leq \int_{\R} \|u(t,z)\|^2_{L^2_y} \|u(t,z)\|^2_{L^\infty_y(|y|\geq R)} dz \\
	&\leq \sup_{z \in \R} \|u(t,z)\|^2_{L^2_y} \left(\int_{\R} \|u(t,z)\|^2_{L^\infty_y(|y|\geq R)} dz \right).
	\end{align*}
	Set $g(z):= \|u(t,z)\|^2_{L^2_y}$, we have
	\begin{align*}
	g(z) = \int_{-\infty}^{z} \partial_s g(s) ds &= 2 \int_{-\infty}^{z} \rea \int_{\R^2} \overline{u}(t,y,s) \partial_s u(t,y,s) dy  ds \\
	&\leq 2 \|u(t)\|_{L^2_x} \|\partial_z u(t)\|_{L^2_x}
	\end{align*}
	which, by the conservation of mass, implies that
	\begin{align} \label{est-cyli-1}
	\sup_{z \in \R} \|u(t,z)\|^2_{L^2_y} \lesssim \|\partial_z u(t)\|_{L^2_x}.
	\end{align}
	By the radial Sobolev embedding \eqref{rad-sobo} with respect to the $y$-variable, we have
	\begin{align}
	\int \|u(t,z)\|^2_{L^\infty_y(|y|\geq R)} dz &\lesssim R^{-1}  \int  \|\nabla_y u(t,z)\|_{L^2_y} \|u(t,z)\|_{L^2_y} dz \nonumber \\
	&\lesssim R^{-1} \left( \int  \|\nabla_y u(t,z)\|^2_{L^2_y} dz\right)^{1/2} \left( \int \|u(t,z)\|^2_{L^2_y} dz\right)^{1/2} \nonumber \\
	&\lesssim R^{-1} \|\nabla_y u(t)\|_{L^2_x} \|u(t)\|_{L^2_x} \nonumber \\
	&\lesssim R^{-1} \|\nabla_y u(t)\|_{L^2_x}. \label{est-cyli-2}
	\end{align}
	Collecting \eqref{est-cyli-1} and \eqref{est-cyli-2}, we get
	\begin{align*}
	\int_{|y|\geq R} |u(t,x)|^4 dx &\lesssim R^{-1} \|\nabla_y u(t)\|_{L^2_x} \|\partial_z u(t)\|_{L^2_x} \\
	&\lesssim R^{-1} \left(\|\nabla_y u(t)\|^2_{L^2_x} + \|\partial_z u(t)\|^2_{L^2_x}\right) \\
	&\lesssim R^{-1} \|\nabla u(t)\|^2_{L^2_x}.
	\end{align*}
	The latter and \eqref{est-cyli} give \eqref{viri-est-cyli}. The proof is complete.
\end{proof}
\subsection{Interaction Morawetz estimates. Non-radial setting}
Following \cite{WY}, let $\chi$ be a decreasing radial smooth function such that $\chi(x)=1$ for $|x|\leq 1-\sigma$, $\chi(x)=0$ for $|x|\geq 1$,  and $|\nabla \chi| \lesssim \sigma^{-1}$, where $0<\sigma<1$ is a small constant.

Let $R>1$ be a large parameter. We define the following radial functions 
\begin{align*}
\Phi_R(x) &=\frac{1}{\omega_{3}R^{3}}\int\chi_{R}^{2}(x-z)\chi_{R}^{2}(z)dz, \\
\Phi_{1,R}(x,y)&=\frac{1}{\omega_{3}R^{3}}\int\chi_{R}^{2}({x-z})\chi_{R}^{4}({y-z})dz,
\end{align*}
where $\chi_{R}(x):=\chi\(\frac{x}{R}\)$ and $\omega_{3}$ is the volume of unit ball in $\R^{3}$. We also define the functions 
\[
\Psi_R(x)=\frac{1}{|x|}\int^{|x|}_{0}\Phi_R(r)dr, \quad \Theta_R(x)=\int^{|x|}_{0} r \Psi_R(r)dr.
\]

We collect below some properties of the above functions.
\begin{remark}[\cite{DM-MRL}] \label{Rema1} 
Straightforward calculations give: 
\begin{itemize}[leftmargin=5mm]
\item the identities  $\partial_{j}\Theta_R(x)=x_j\Psi_R(x)$ and $\partial_{j}\Psi_R(x)=\frac{x_{j}}{|x|^{2}}(\Phi_R(x)-\Psi_R(x))$, and in particular,
\begin{align} \label{prop-cutoff-1}
\Delta \Theta_R(x)=2\Psi_R(x)+\Phi_R(x), \quad \partial^2_{jk}\Theta_R(x)=\delta_{jk}\Phi_R(x)+P_{jk}(x)(\Psi_R(x)-\Phi_R(x)), 
\end{align}
where $P_{jk}(x)=\delta_{jk}-\frac{x_{j}x_{k}}{|x|^{2}}$ with $\delta_{jk}$ the Kronecker symbol;
\item and that the estimates below are satisfied:
\begin{equation}\label{prop-cutoff-2}
\begin{aligned}
&\Psi_R(x)-\Phi_R(x)\geq 0,\quad\qquad |\Psi_R(x)|\lesssim \min\left\{1, \frac{R}{|x|}\right\}, \\
& |\nabla \Phi_R(x)|\lesssim \frac{1}{\sigma R}, \,\,\,\quad\quad\qquad
|\nabla \Psi_R(x)|\lesssim \frac{1}{\sigma}\min \left\{\frac{1}{R}, \frac{R}{|x|^2}\right\}, \\
& |\Phi_R(x)-\Phi_{1,R}(x)|\lesssim \sigma, \quad\quad |\Psi_R(x)-\Phi_R(x)|\lesssim \frac{1}{\sigma}\min\left\{\frac{|x|}{R}, \frac{R}{|x|}\right\}.
\end{aligned}
\end{equation}
\end{itemize}
\end{remark}

Let $(u,v)$ be a global $H^1$-solution to \eqref{SNLS} with initial data $(u_0,v_0)$ satisfying \eqref{cond-ener} and \eqref{cond-gwp}. We define the interaction Morawetz quantity adapted to system \eqref{SNLS} by
\[
\Mcal^{\otimes 2}_{R}(t)=2\iint L_{\gamma}(u,v)(t,y)\nabla \Theta_R(x-y)\cdot \IM (\overline{u}\nabla u+\gamma \overline{v}\nabla v)(t,x)dxdy,
\]
where
\[
L_{\gamma}(u,v)(t, x):=(|u|^{2}+\gamma^{2}|v|^{2})(t,x).
\]
From the conservation of mass, \eqref{est-K}, and \eqref{prop-cutoff-2}, we have
\[ 
\sup_{t\in\R}|\Mcal^{\otimes 2}_{R}(t)| \lesssim R.
\]
By Lemma \ref{Imporide}, we have
\begin{equation}\label{dtm}
\partial_{t}L_{\gamma}(u,v)=-2\nabla\cdot\IM (\overline{u}\nabla u)
-2\gamma\nabla\cdot\IM (\overline{v}\nabla v)+\frac{2}{3}\(1-\frac{\gamma}{3}\)\IM (u^{3}\overline{v})
\end{equation}
and
\begin{align*}
\partial_{t}\IM (\overline{u} \partial_k u+\gamma\overline{v} \partial_kv)&=
-2\partial_{j}\RE(\partial_j\overline{u} \partial_k u+ \partial_j\overline{v} \partial_k v)+
\frac{1}{2}\partial_{k}\Delta (|u|^{2}+|v|^{2})+2\partial_{k}N(u,v),
\end{align*}
where we recall that
\[
N(u,v)=\frac{1}{36}|u|^{4}+\frac{9}{4}|v|^{4}+|u|^{2}|v|^{2}+\frac{1}{9}\RE (\overline{u}^{3}v).
\]
Here repeated indices are summed. Moreover, by using integration by parts, we readily see that
\begin{align}\label{Ln}
\frac{d}{dt} \Mcal^{\otimes 2}_{R}(t)=&4\iint L_{\gamma}(u,v)(t,y) \nabla \Theta_R(x-y)\cdot \nabla N(u,v)(t,x)dxdy
\\  \label{Ln1}
&+\iint L_{\gamma}(u,v)(t,y) \nabla \Theta_R(x-y)\cdot \nabla\Delta (|u|^{2}+|v|^{2})(t,x)dxdy
\\\label{Ln2}
&-4\iint L_{\gamma}(u,v)(t,y) \partial_{k} \Theta_R(x-y)\partial_{j}\RE(\partial_j\overline{u} \partial_k u+\partial_j \overline{v} \partial_kv)(t,x)dxdy
\\ \label{Ln3}
&+2\iint \partial_{t}L_{\gamma}(u,v)(t,y)\nabla \Theta_R(x-y)\cdot \IM (\overline{u}\nabla u+\gamma \overline{v}\nabla v)(t,x)dxdy.
\end{align}

We are able to prove the following interaction Morawetz estimates, which will play a fundamental role for the proof of the scattering theorem in the non-radial framework.
\begin{proposition}\label{Imnn}
Let $\mu,\gamma>0$, and $(\phi,\psi) \in \Gc(0,3\gamma,\gamma)$. Let $(u_{0},v_{0}) \in H^1\times H^1$ satisfy \eqref{cond-ener} and \eqref{cond-gwp}. Let $(u,v)$ be the corresponding global solution to \eqref{SNLS}. Then for arbitrary small $\epsilon>0$, there exist $T_{0}=T_{0}(\epsilon)$, $J=J(\epsilon)$, $R_{0}=R_{0}(\epsilon, u_0,v_0 \phi,\psi)$ sufficiently large and $\sigma=\sigma(\epsilon)$,
$\eta=\eta(\epsilon)$ sufficiently small such that if $|\gamma-3|<\eta$, then for any $a\in \R$, 
\begin{multline}\label{Pl11}
\frac{1}{JT_{0}}\int^{a+T_{0}}_{a}\int^{R_{0}e^{J}}_{R_{0}}\frac{1}{R^{3}}\int_{\R^{3}}
W_{\gamma}(\chi_{R}(\cdot-z)u(t),\chi_{R}(\cdot-z)v(t)) \\
\times K(\chi_{R}(\cdot-z)u^{\xi}(t),\chi_{R}(\cdot-z)v^{\xi}(t))dz\frac{dR}{R}dt \lesssim \epsilon,
\end{multline}
where $(u^{\xi}(t,x), v^{\xi}(t,x)):= (e^{ix\cdot\xi}u(t,x), e^{i\gamma x\cdot\xi}v(t,x))$ for some $\xi=\xi(t,z,R)\in \R^{3}$ and
\[
W_{\gamma}(f,g)=\int_{\R^{3}}L_{\gamma}(f,g)(x)dx.
\]
\end{proposition}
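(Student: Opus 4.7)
The strategy adapts the Dodson--Murphy interaction Morawetz scheme of \cite{DM-MRL} to the coupled system \eqref{SNLS}. The key input is the uniform bound $\sup_t |\Mcal_R^{\otimes 2}(t)| \lesssim R$, which follows from $|\nabla \Theta_R|\lesssim R$ (by \eqref{prop-cutoff-2}) together with conservation of mass and \eqref{est-K}. Integrating \eqref{Ln}--\eqref{Ln3} over $[a,a+T_0]$ therefore gives
\[
\int_a^{a+T_0} \frac{d}{dt}\Mcal_R^{\otimes 2}(t)\,dt = \Mcal_R^{\otimes 2}(a+T_0) - \Mcal_R^{\otimes 2}(a) = \Oc(R),
\]
and the plan is to rearrange the four contributions so that a manifestly positive space-time integrand is isolated on the left-hand side.

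Term \eqref{Ln2}, integrated by parts in $x$ and combined with $\partial^2_{jk}\Theta_R = \delta_{jk}\Phi_R + P_{jk}(\Psi_R-\Phi_R)$ from \eqref{prop-cutoff-1}, yields $4\iint L_\gamma(u,v)(t,y)\Phi_R(x-y)(|\nabla u|^2+|\nabla v|^2)(t,x)\,dx\,dy$ plus a non-negative angular contribution (discardable from below since $P_{jk}$ is the off-radial projector and $\Psi_R-\Phi_R\geq 0$). Term \eqref{Ln}, after integrating by parts and using $\Delta \Theta_R = 2\Psi_R + \Phi_R$, produces $-4\iint L_\gamma(2\Psi_R+\Phi_R) N(u,v)$; replacing $\Psi_R$ by $\Phi_R$ modulo a $\sigma$-error (by \eqref{prop-cutoff-2}) brings the coefficient of $N$ to $-3$, matching the $G$-density. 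The dispersive term \eqref{Ln1}, integrated by parts, is bounded by $\|\Delta^2\Theta_R\|_{L^\infty}\lesssim \sigma^{-1}R^{-2}$ times the conserved mass, which is negligible provided $R_0$ is large.

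The delicate contribution is \eqref{Ln3}. Substituting \eqref{dtm} and integrating by parts in $y$, the two divergence pieces generate momentum--momentum cross terms of the form $\iint \partial^2_{jk}\Theta_R(x-y)\,\IM(\overline{u}\partial_j u)(t,y)\,\IM(\overline{u}\partial_k u)(t,x)\,dx\,dy$ and the analogous expression weighted by $\gamma^2$ for $v$. Choosing $\xi=\xi(t,z,R)$ as the localized momentum of $(u,\gamma v)$ on the support of $\chi_R(\cdot-z)$ and completing the square turns these cross terms, when combined with the kinetic contribution from \eqref{Ln2}, into the gauge-invariant kinetic density of the boosted pair $(u^\xi,v^\xi)$; this is exactly why the Galilean-rotated fields appear in the statement. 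The only obstruction to the cancellation is the non-divergence piece $\frac{2}{3}(1-\gamma/3)\IM(u^3\overline{v})$ in \eqref{dtm}, which breaks Galilean invariance for $\gamma\neq 3$; Cauchy--Schwarz controls it by $\lesssim \eta\, K(u,v)$, which is harmless once $\eta$ is taken small.

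Once the positive quantity is isolated, unfold $\Phi_R=\omega_3^{-1}R^{-3}(\chi_R^2 * \chi_R^2)$ and change variables to rewrite it as a $z$-integral (with the weight $R^{-3}$) of $L_\gamma(\chi_R(\cdot-z)u,\chi_R(\cdot-z)v)\cdot\bigl[K(\chi_R(\cdot-z)u^\xi,\chi_R(\cdot-z)v^\xi)-3P(\chi_R(\cdot-z)u,\chi_R(\cdot-z)v)\bigr]$, the cutoff-commutator errors of order $R^{-1}$ being absorbed for $R_0$ large. Applying the coercivity \eqref{coer-prop} of Lemma \ref{lem-coer-2} yields a lower bound of the form $\nu\,W_\gamma(\chi_R u,\chi_R v)\,K(\chi_R u^\xi,\chi_R v^\xi)$. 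Integrating in $R\in[R_0,R_0 e^J]$ against $dR/R$ and dividing by $JT_0$ then turns the boundary contribution $\Oc(R)$ into $\tfrac{1}{JT_0}\Oc(R_0 e^J)\lesssim \epsilon$, provided $T_0$ is chosen sufficiently large depending on $R_0, e^J$. The parameters are selected in the order $\sigma$ (to absorb $|\Phi_R-\Phi_{1,R}|\lesssim \sigma$), then $R_0$ large (to invoke \eqref{Cb2} and absorb the $\sigma^{-1}R_0^{-2}$ dispersive remainder), then $J$, then $T_0\gg R_0 e^J/\epsilon$, and finally $\eta$. The main technical obstacle is the completion-of-squares step in \eqref{Ln3}: identifying the correct local Galilean frame $\xi(t,z,R)$ and verifying that the residual symmetry-breaking term remains $\eta$-controlled uniformly in $t,z,R$.
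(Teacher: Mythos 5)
Your proposal is correct and follows essentially the same route as the paper: the same decomposition of $\frac{d}{dt}\Mcal^{\otimes 2}_R$, the same use of the angular positivity and of the local Galilean frame $\xi(t,z,R)$ to produce the gauge-invariant quantity, the same appeal to the coercivity \eqref{coer-prop}, and the same averaging in $R$ and $t$ with the same parameter hierarchy. Two harmless imprecisions: the error $|\Psi_R-\Phi_R|$ is not pointwise $\lesssim\sigma$ but $\lesssim\sigma^{-1}\min\{|x|/R,R/|x|\}$, so its smallness comes from the $dR/R$-average (giving $1/(\sigma J)$) rather than from the choice of $\sigma$; and the symmetry-breaking term carries the weight $|\nabla\Theta_R|\lesssim R$, so its contribution is $\lesssim\eta R_0e^J/J$ rather than $\lesssim\eta K(u,v)$, which is why $\eta$ must be taken exponentially small in $\epsilon^{-1}$ (as the paper does with $\eta=e^{-\epsilon^{-3}}$).
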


\begin{proof}
Since $\Delta \Theta_R(x-y)=3\Phi_{1,R}(x,y)+3(\Phi_R-\Phi_{1,R})(x,y)+2(\Psi_R-\Phi_R)(x-y)$, by integration by parts, we have
\begin{align}\label{S1}
\eqref{Ln}=&-12\iint L_{\gamma}(u,v)(t,y) \Phi_{1,R}(x-y)N(u,v)(t,x)dxdy
\\\label{S2}
&-12\iint L_{\gamma}(u,v)(t,y) (\Phi_R-\Phi_{1,R})(x-y)N(u,v)(t,x)dxdy
\\\label{S3}
&-8\iint L_{\gamma}(u,v)(t,y) (\Psi_R-\Phi_R)(x-y)N(u,v)(t,x)dxdy.
\end{align}
Again, by integration by parts and Remark \ref{Rema1}, we have
\begin{align} \label{Ln11}
\eqref{Ln1}=\iint  L_{\gamma}(u,v)(t,y) \nabla (3\Phi_R(x-y)+2(\Psi_R-\Phi_R)(x-y))\cdot \nabla(|u|^{2}+|v|^{2})(t,x)dxdy.
\end{align}
We will treat \eqref{S2}, \eqref{S3}, and \eqref{Ln11} as error terms. Moreover, by Remark \ref{Rema1}, we get
\begin{align}\label{R1}
\eqref{Ln2}&= 4\iint L_{\gamma}(u,v)(t,y) \Phi_R(x-y)(|\nabla u|^{2}+|\nabla v|^{2})(t,x)dxdy\\\label{R2}
&+4\iint L_{\gamma}(u,v)(t,y) (\Psi_R-\Phi_R)(x-y)P_{jk}(x-y)\RE(\partial_j\overline{u} \partial_k u+ \partial_j\overline{v} \partial_k v)(t,x)dxdy.
\end{align}
Similarly, by \eqref{dtm} and Remark \ref{Rema1}, we see that
\begin{align}\label{J1}
	\eqref{Ln3}&=-4\iint \Phi_R(x-y) \IM (\overline{u}\nabla u+\gamma \overline{v}\nabla v)(t,y)\cdot
	\IM (\overline{u}\nabla u+\gamma \overline{v}\nabla v)(t,x)dxdy\\\label{J2}
	&-4\iint (\Psi_R-\Phi_R)(x-y)P_{jk}(x-y)\IM(\overline{u} \partial_ku+\gamma\overline{v} \partial_k v)(t,y)
	\IM(\overline{u} \partial_j u +\gamma\overline{v} \partial_j v)(t,y)dxdy\\\label{J3}
	&+\frac{4}{3}\(1-\frac{\gamma}{3}\)\iint \nabla \Theta_R(x-y)\cdot \IM(\overline{u}\nabla u+\gamma \overline{v}\nabla v)(t,x)\IM(u^{3}\overline{v})(t,y)dxdy.
	\end{align}
Now, let $\slashed\nabla_y$ denote the angular derivative centered at $y$, namely
\[
\De_y f(x) := \nabla f(x) - \frac{x-y}{|x-y|} \(\frac{x-y}{|x-y|} \nabla f(x)\)
\]
and similarly for $\De_x$. We have
\begin{equation}\label{Pds}
\begin{aligned}
\eqref{R2}+\eqref{J2} &=4\iint (\Psi_R-\Phi_R)(x-y)\Big((|\De_{y}u|^{2}+|\De_{y}v|^{2})(t,x) (|u|^{2}+\gamma^{2}|v|^{2})(t,y)\\
& -\IM (\overline{u}\De_{y}u+\gamma \overline{v}\De_{y}v)(t,x)\cdot
\IM (\overline{u}\De_{x}u+\gamma \overline{v}\De_{x}v)(t,y)\Big)dxdy.
\end{aligned}
\end{equation}
Hence $\psi_R-\phi_R$ is radial and non-negative, by the Cauchy-Schwarz inequality, we infer that 
\[
\eqref{R2}+\eqref{J2}=\eqref{Pds}\geq 0.
\]
On the other hand,  as $\chi_{R}$  is radial and non-negative, we have
\begin{align}\nonumber
\eqref{R1}+\eqref{J1} &=\frac{4}{\omega_{3}R^{3}}\iiint \chi^2_{R}(x-z)\chi^2_{R}(y-z)\Big(
(|\nabla u|^{2}+|\nabla v|^{2})(t,x) (|u|^{2}+\gamma^{2}| v|^{2})(t,y) \nonumber \\
& - \IM (\overline{u}\nabla u+\gamma \overline{v}\nabla v)(t,y)\cdot
	\IM (\overline{u}\nabla u+\gamma \overline{v}\nabla v)(t,x)\Big) dxdydz \nonumber\\	
&=\frac{4}{\omega_{3}R^{3}}\int B(u,v)(t,z)dz, \label{Gin}
\end{align}
where
\begin{align*}
B(u,v)(t,z):&=\int \chi^{2}_{R}(x-z)(|\nabla u|^{2}+|\nabla v|^{2})(t,x)dx
\int \chi^{2}_{R}(y-z)(|u|^{2}+\gamma^{2}|v|^{2})(t,y)dy\\
& -\left|\int \chi^{2}_{R}(x-z)\IM (\overline{u}\nabla u+\gamma \overline{v}\nabla v)(t,x)dx\right|^{2}.
\end{align*}
Notice that $B(u,v)$ is invariant under the gauge transformation
\[(u(t,x), v(t,x))\mapsto (u^{\xi}(t,x), v^{\xi}(t,x)):= (e^{ix\cdot\xi}u(t,x), e^{i\gamma x\cdot\xi}v(t,x))\]
for any $\xi\in \R^{3}$. Indeed, we see that
\begin{align*}
L_{\gamma}(u^{\xi},v^{\xi})&= L_{\gamma}(u,v), \quad V_{\gamma}(u^{\xi},v^{\xi})=\xi L_{\gamma}(u,v)+V_{\gamma}(u,v),\\
H(u^{\xi},v^{\xi})&=|\xi|^{2} L_{\gamma}(u,v)+H(u,v)+2\xi\cdot V_{\gamma}(u,v),
\end{align*}
where
\[
V_{\gamma}(u,v)(t,x):=\IM (\overline{u}\nabla u+\gamma \overline{v}\nabla v)(t,x), \quad
H(u,v)(t,x):= (|\nabla u|^{2}+|\nabla v|^{2})(t,x),
\]
which implies that $B(u^{\xi},v^{\xi})=B(u,v)$. Next, we define
\[
\xi(t,z,R):=-\frac{\mathlarger{\int}\chi^{2}_{R}(x-z)V_{\gamma}(u,v)(t,x)dx}
{\mathlarger{\int}\chi^{2}_{R}(x-z)L_{\gamma}(u,v)(t,x)dx}
\]
provided that the denominator is non-zero; otherwise we can define $\xi(t,z,R)\equiv 0$. With this choice of $\xi$, we have
\[ 
\int \chi^{2}_{R}(x-z)V_{\gamma}(u^{\xi},v^{\xi})(t,x)dx=0.
\]
Combining this with \eqref{Gin}, we infer that
\begin{align*}
\eqref{R1}+\eqref{J1}=\frac{4}{\omega_{3}R^{3}}\int \(\int \chi^{2}_{R}(x-z)H(u^{\xi},v^{\xi})(t,x)dx
 \int \chi^{2}_{R}(y-z)L_{\gamma}(u,v)(t,y)dy\)dz.
\end{align*}
Therefore, by the above identity, \eqref{S1}, \eqref{S2}, \eqref{Ln11}, and \eqref{J3}, we get
\begin{align}
\frac{4}{\omega_{3}R^{3}}&\int_{\R^{3}}\(\int \chi^{2}_{R}(y-z)L_{\gamma}(u,v)(t,y)dy\) \nonumber \\
&\times \(\int \chi^{2}_{R}(x-z)H(u^{\xi},v^{\xi})(t,x)-3\chi^{4}_{R}(x-z)N(u,v)(t,x) dx\)dz \label{Mc1} \\
&\leq \frac{d}{dt} \Mcal^{\otimes 2}_{R}(t) \label{Mc2}\\
\label{Mc3}
&+\iint  L_{\gamma}(u,v)(t,y) (12(\Phi_R-\Phi_{1,R})+8(\Psi_R-\Phi_R))(x-y) N(u,v)(t,x)dxdy\\\label{Mc4}
&-\iint  L_{\gamma}(u,v)(t,y) (3 \nabla\Phi_R+ 2 \nabla(\Psi_R-\Phi_R))(x-y) 
\cdot \nabla(|u|^2+|v|^2)(t,x)dxdy\\\label{Mc5}
&+\frac{4}{3}\(\frac{\gamma}{3}-1\)\iint  \nabla \Theta_R(x-y)\cdot \IM(\overline{u}\nabla u+\gamma \overline{v}\nabla v)(t,x)\IM(u^{3}\overline{v})(t,y)dxdy.
\end{align}
Now, we consider \eqref{Mc1}. Since
\[
\int |\nabla (\chi f)|^{2}dx=\int \chi^{2}|\nabla f|^{2}dx-\int\chi\Delta\chi |f|^{2}dx,
\]
we get
\begin{equation}\label{Sust}
\begin{aligned}
\int \chi^{2}_{R}(x-z)H(u^{\xi},v^{\xi})(t,x)dx&=
\int H(\chi_{R}(\cdot-z)u^{\xi},\chi_{R}(\cdot-z)v^{\xi})(t,x)dx\\
&+\int \chi_{R}(x-z)\Delta\(\chi_{R}(x-z)\)(|u|^{2}+|v|^{2})(t,x)dx.
\end{aligned}
\end{equation}
Thus, substituting \eqref{Sust} in \eqref{Mc1} and using Lemma \ref{lem-coer-2} with $\chi_R$ instead of $\Gamma_R$, we see that there exists $\nu >0$ such that  
\begin{align*}
\frac{1}{JT_{0}}\int^{a+T_{0}}_{a}\int^{R_{0}e^{J}}_{R_{0}}\eqref{Mc1}\frac{dR}{R}dt &\geq \frac{4\nu}{\omega_{3}JT_{0}}\int^{a+T_{0}}_{a}\int^{R_{0}e^{J}}_{R_{0}}\frac{1}{R^{3}}\int_{\R^{3}}
\bigg( W_{\gamma}(\chi_{R}(\cdot-z)u(t),\chi_{R}(\cdot-z)v(t)) \\
&\mathrel{\phantom{\quad\quad\int^{a+T_{0}}_{a}\int^{R_{0}e^{J}}_{R_{0}}}} \times K(\chi_{R}(\cdot-z)u^{\xi}(t),\chi_{R}(\cdot-z)v^{\xi}(t))dz\bigg)\frac{dR}{R}dt\\
&+\frac{4\nu}{\omega_{3}JT_{0}}\int^{a+T_{0}}_{a}\int^{R_{0}e^{J}}_{R_{0}}\frac{1}{R^{3}}\int_{\R^{3}}
W_{\gamma}(\chi_{R}(\cdot-z)u(t),  \chi_{R}(\cdot-z)v(t))\\
&\mathrel{\phantom{\quad}} \times \(\int_{\R^{3}}\chi_{R}(\cdot-z)\Delta\( \chi_{R}(\cdot-z)\)(|u|^{2}+|v|^{2})(t,x)dx  \)dz\frac{dR}{R}dt.
\end{align*}
By the conservation of mass and the fact that $\|\Delta(\chi_R)\|_{L^\infty} \lesssim R^{-2}$, the absolute value of the second term in the right hand side can be bounded by
\[
\frac{4\nu}{\omega_3 J T_0} \int_a^{a+T_0} \int_{R_0}^{R_0e^J} CR^{-2}\frac{dR}{R} dt \lesssim \frac{1}{JR_0^2}.
\]
This implies that
\begin{align}\nonumber
\frac{1}{JT_{0}}\int^{a+T_{0}}_{a}\int^{R_{0}e^{J}}_{R_{0}}&\frac{1}{R^{3}}\int_{\R^{3}}
W_{\gamma}( \chi_{R}(\cdot-z)u(t),\chi_{R}(\cdot-z)v(t))K(\chi_{R}(\cdot-z)u^{\xi}(t),\chi_{R}(\cdot-z)v^{\xi}(t))dz\frac{dR}{R}dt\\\label{Plcsa}
&\lesssim \frac{1}{JT_{0}}\int^{a+T_{0}}_{a}\int^{R_{0}e^{J}}_{R_{0}}\eqref{Mc1}\frac{dR}{R}dt+\frac{1}{JR_{0}^{2}}.
\end{align}
Next, as $|\Mcal^{\otimes 2}_{R}(t)|\lesssim R$, we have 
\begin{equation}\label{Cul2}
\left|\frac{1}{JT_{0}}\int^{a+T_{0}}_{a}\int^{R_{0}e^{J}}_{R_{0}} \eqref{Mc2}\frac{dR}{R} dt\right| \leq \frac{1}{JT_0} \int^{R_{0}e^{J}}_{R_{0}} \sup_{t\in [a, a+T_0]} |\Mcal^{\otimes 2}_R(t)| \frac{dR}{R} \lesssim \frac{R_{0}e^{J}}{JT_0}.
\end{equation}
By \eqref{prop-cutoff-2}, the conservation of mass, \eqref{est-K}, and Sobolev embedding, we have
\begin{align*}
\Big| \frac{1}{JT_0} \int_a^{a+T_0} &\int_{R_0}^{R_0e^J} \iint  L_\gamma(u,v)(t,y) (\Phi_R-\Phi_{1,R})(x-y) N(u,v)(t,x) dx dy \frac{dR}{R} dt \Big| \\
&\lesssim \frac{1}{JT_0} \int_a^{a+T_0} \int_{R_0}^{R_0e^J} \sigma \frac{dR}{R} dt \lesssim \sigma,
\end{align*}
where we have used the fact that
\begin{align*}
\int  |L_\gamma(u,v)(t,y)|dy &\lesssim M_{3\gamma}(u(t),v(t)), \\ 
\int |N(u,v)(t,x)|dx &\lesssim \|(u(t),v(t))\|^4_{L^4 \times L^4 } \lesssim \|(u(t),v(t))\|^4_{H^1 \times H^1 }.
\end{align*}
Using \eqref{prop-cutoff-2}, we see that
\begin{align*}
\Big| \frac{1}{JT_0} &\int_a^{a+T_0} \int_{R_0}^{R_0e^J} \iint  L_\gamma(u,v)(t,y) (\Psi_R-\Phi_R)(x-y) N(u,v)(t,x) dxdy \frac{dR}{R}dt \Big| \\
&\lesssim \frac{1}{ \sigma JT_0} \int_a^{a+T_0} \int_{R_0}^{R_0e^J} \iint  |L_\gamma(u,v)(t,y)| \min \left\{\frac{|x-y|}{R},\frac{R}{|x-y|} \right\} |N(u,v)(t,x)|dxdy \frac{dR}{R} dt \\
&\lesssim \frac{1}{ \sigma JT_0} \int_a^{a+T_0} \iint  |L_\gamma(u,v)(t,y)| \left( \int_{R_0}^{R_0e^J} \min \left\{\frac{|x-y|}{R},\frac{R}{|x-y|} \right\} \frac{dR}{R} \right) |N(u,v)(t,x)|dxdy dt \\
&\lesssim \frac{1}{ \sigma J},
\end{align*}
where we have used the fact that 
\[
\int_0^\infty \min \left\{\frac{|x-y|}{R},\frac{R}{|x-y|} \right\} \frac{dR}{R} \lesssim 1.
\]
We thus get
\begin{equation}\label{Cul3}
\left|\frac{1}{JT_{0}}\int^{a+T_{0}}_{a}\int^{R_{0}e^{J}}_{R_{0}} \eqref{Mc3}\frac{dR}{R} dt\right| \lesssim \sigma +\frac{1}{ \sigma J}.
\end{equation}
As $|\nabla\Phi_R(x)|, |\nabla(\Psi_R-\Phi_R)(x)| \lesssim \frac{1}{ \sigma R}$, we see that
\begin{align} \label{Cul4}
\left|\frac{1}{JT_{0}}\int^{a+T_{0}}_{a}\int^{R_{0}e^{J}}_{R_{0}} \eqref{Mc4}\frac{dR}{R} dt\right| \lesssim \frac{1}{\sigma JR_0}.
\end{align}
Finally, as $|\gamma-3|<\eta$ and $|\nabla \Theta_R(x)| \lesssim R$, we infer from the conservation of mass, \eqref{est-K}, and Sobolev embedding that
\begin{equation}\label{bnms}
\left|\frac{1}{JT_{0}}\int^{a+T_{0}}_{a}\int^{R_{0}e^{J}}_{R_{0}}\eqref{Mc5} \frac{dR}{R}dt\right|\lesssim  \frac{\eta}{JT_{0}}\int^{a+T_{0}}_{a}\int^{R_{0}e^{J}}_{R_{0}}dRdt \lesssim \eta \frac{R_{0}e^{J}}{J}.
\end{equation}
Combining these estimates \eqref{Plcsa}, \eqref{Cul2}, \eqref{Cul3}, \eqref{Cul4}, and \eqref{bnms}, we obtain
\begin{align*}
\frac{1}{JT_{0}}\int^{a+T_{0}}_{a}\int^{R_{0}e^{J}}_{R_{0}}&\frac{1}{R^{3}}\int_{\R^{3}}
W_{\gamma}( \chi_{R}(\cdot-z)u(t),\chi_{R}(\cdot-z)v(t))K(\chi_{R}(\cdot-z)u^{\xi}(t),\chi_{R}(\cdot-z)v^{\xi}(t))dz\frac{dR}{R}dt\\
&\lesssim \frac{1}{JR_{0}^{2}}+\frac{R_{0}e^{J}}{JT_0}+\sigma+\frac{1}{\sigma J}+\frac{1}{ \sigma JR_{0}}+\eta \frac{R_{0}e^{J}}{J},
\end{align*}
which shows \eqref{Pl11} by choosing $\sigma=\epsilon, J=\epsilon^{-3}$, $R_{0}=\epsilon^{-1}$, $T_{0}=e^{\epsilon^{-3}}$,
and $\eta=e^{-\epsilon^{-3}}$. The proof is complete.
\end{proof}

\subsection{Morawetz estimates. Radial setting}
We now turn our attention to the proof of the radial version of the Morawetz estimate which will be essential in the proof of the scattering theorem in the radially symmetric setting. In this context, we take advantage of the radial Sobolev embedding to get some spatial decay.
\begin{lemma}\label{Mst1}
Let $\mu,\gamma>0$, and $(\phi,\psi)\in \Gc(0, 3\gamma, \gamma)$. Let $(u_0,v_0)\in H^1 \times H^1 $ be radially symmetric satisfying \eqref{cond-ener} and \eqref{cond-gwp}. Then for any $T>0$ and $R=R(u_0,v_0, \phi,\psi)>0$ sufficiently large, the corresponding global solution to \eqref{SNLS} satisfies
\begin{equation}\label{Mirs}
\frac{1}{T}\int^{T}_{0}\int_{|x|\leq \frac{R}{2}}\(|u(t,x)|^{\frac{10}{3}}+|v(t,x)|^{\frac{10}{3}}\)dxdt
\lesssim \frac{R}{T}+\frac{1}{R^{2}}.
\end{equation}
\end{lemma}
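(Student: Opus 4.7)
The plan is to apply the localized radial virial identity of Corollary \ref{rem-viri-iden}(iii) with the weight $\varphi_R$ from \eqref{defi-varphi-R}, which is exactly $|x|^2$ on $\{|x|\le R\}$ (so that $\varphi_R''=2$ and $\Delta\varphi_R=6$ there) and smoothly truncated on the annulus $R\le|x|\le 2R$. A direct expansion, exactly as in the proof of Lemma \ref{lem-viri-est-rad}, gives
\[
\frac{d}{dt}\Mcal_{\varphi_R}(t)=8G(u,v)-\int\Delta^{2}\varphi_{R}(|u|^{2}+|v|^{2})\,dx-4\!\int(2-\varphi_R''(r))(|\nabla u|^{2}+|\nabla v|^{2})\,dx+4\!\int(6-\Delta\varphi_R)\,N(u,v)\,dx.
\]
The first error term is $O(R^{-2})$ by conservation of mass and $\|\Delta^{2}\varphi_R\|_{L^\infty}\lesssim R^{-2}$; the second, being non-positive, is discarded when we seek a lower bound; the last is supported on $\{|x|\ge R\}$ where, invoking the radial Sobolev embedding \eqref{rad-sobo} and $|N|\lesssim|u|^{4}+|v|^{4}$, it is bounded by $CR^{-2}(K(u,v)+1)$ exactly as in Lemma \ref{lem-viri-est-rad}.

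The crucial step is to exploit the coercivity of Lemma \ref{lem-coer-2}. Choosing the cut-off $\Gamma_R$ supported in $\{|x|\le R\}$ and equal to one in $\{|x|\le R/2\}$, Lemma \ref{lem-coer-2} gives $K(\Gamma_R u,\Gamma_R v)-3P(\Gamma_R u,\Gamma_R v)\ge \nu\, K(\Gamma_R u,\Gamma_R v)$; expanding the derivatives of the product $\Gamma_R u$ (and similarly $\Gamma_R v$) and absorbing the commutator terms of order $|\nabla\Gamma_R|\lesssim R^{-1}$ into an $O(R^{-2})$ error via conservation of mass, I translate this into
\[
8G(u(t),v(t))\;\gtrsim\;\int_{|x|\le R/2}(|\nabla u|^{2}+|\nabla v|^{2})(t,x)\,dx\;-\;CR^{-2},
\]
uniformly in $t$. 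Combining this with the virial identity and integrating in time yields
\[
\int_{0}^{T}\!\int_{|x|\le R/2}(|\nabla u|^{2}+|\nabla v|^{2})\,dx\,dt\;\lesssim\;|\Mcal_{\varphi_R}(T)-\Mcal_{\varphi_R}(0)|+CTR^{-2}\;\lesssim\;R+TR^{-2},
\]
the boundary term being estimated by $|\Mcal_{\varphi_R}(t)|\le 2\|\nabla\varphi_R\|_{L^\infty}(\|u\|_{L^{2}}\|\nabla u\|_{L^{2}}+\gamma\|v\|_{L^{2}}\|\nabla v\|_{L^{2}})\lesssim R$, where $|\nabla\varphi_R|\lesssim R$ by construction and the $H^{1}$ norms are uniformly bounded by Lemma \ref{L22}.

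To conclude, I upgrade the $L^{2}$-control of the gradient to the announced $L^{10/3}_{t,x}$ estimate via Gagliardo--Nirenberg: let $\eta$ be a smooth cut-off with $\eta\equiv 1$ on $\{|x|\le R/2\}$ and $\supp\eta\subset\{|x|\le R\}$; then
\[
\|\eta u\|_{L^{10/3}}^{10/3}\lesssim \|\nabla(\eta u)\|_{L^{2}}^{2}\|\eta u\|_{L^{2}}^{4/3}\lesssim \bigl(\|\nabla u\|_{L^{2}(|x|\le R)}^{2}+R^{-2}\bigr)M_{3\gamma}(u_0,v_0)^{2/3},
\]
and the analogous bound for $v$. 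Integrating in time and inserting the gradient estimate gives
\[
\int_0^T\!\!\int_{|x|\le R/2}\!(|u|^{10/3}+|v|^{10/3})\,dx\,dt\;\lesssim\; R+TR^{-2},
\]
and division by $T$ produces \eqref{Mirs}. The one subtle point is the coercivity step: Lemma \ref{lem-coer-2} yields a lower bound only for the truncated functional $K(\Gamma_R u,\Gamma_R v)-3P(\Gamma_R u,\Gamma_R v)$ rather than for the ``sharp'' localized quantity $\int_{|x|\le R}(|\nabla u|^{2}-3N(u,v))\,dx$ appearing in the virial identity, and matching the two requires careful bookkeeping of the commutators involving $\nabla\Gamma_R$, all of which are $O(R^{-2})$ after the conservation laws and the radial Sobolev embedding are applied.
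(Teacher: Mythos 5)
Your overall architecture (the truncated virial quantity $\Mcal_{\varphi_R}$ with $\varphi_R$ as in \eqref{defi-varphi-R}, the uniform bound $\sup_t|\Mcal_{\varphi_R}(t)|\lesssim R$, the treatment of the $\Delta^2\varphi_R$ and exterior $N$ terms via mass conservation and radial Sobolev, the coercivity input from Lemma \ref{lem-coer-2}, and the final Gagliardo--Nirenberg upgrade to the $L^{10/3}_{t,x}$ bound) is the paper's. But there is a genuine gap in the middle step, and it is a sign/direction error. You expand $\frac{d}{dt}\Mcal_{\varphi_R}=8G(u,v)+E_1+E_2+E_3$ with $E_2=-4\int(2-\varphi_R''(r))(|\nabla u|^2+|\nabla v|^2)\,dx\le 0$ and ``discard'' $E_2$. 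A Morawetz argument requires a \emph{lower} bound $\frac{d}{dt}\Mcal_{\varphi_R}(t)\ge c\int_{|x|\le R/2}(|\nabla u|^2+|\nabla v|^2)\,dx-CR^{-2}$, so that integrating in time against $\sup_t|\Mcal_{\varphi_R}|\lesssim R$ closes the estimate. Dropping the non-positive $E_2$ only yields $\frac{d}{dt}\Mcal_{\varphi_R}\le 8G+CR^{-2}$, i.e.\ the wrong direction, and $E_2$ cannot be absorbed into the error: $2-\varphi_R''$ is supported in $\{|x|\ge R\}$, but $\int_{|x|\ge R}(|\nabla u|^2+|\nabla v|^2)$ has no smallness (radial Sobolev decays $|u|$, not $|\nabla u|$), so $E_2$ is a genuinely negative $O(1)$ term. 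Hence your chain ``$8G\gtrsim\int_{|x|\le R/2}(|\nabla u|^2+|\nabla v|^2)-CR^{-2}$, combine with the virial identity, integrate'' does not close: you would need $\int_0^T G\,dt\lesssim R+TR^{-2}$, and your decomposition gives only the reverse inequality. (Your coercivity bound on $8G$ is itself correct --- it already follows from \eqref{coer-1} and the refined Gagliardo--Nirenberg inequality with no cut-off --- but it bounds a global quantity from below by a local one, which is not what the virial identity can exploit. The decomposition of Lemma \ref{lem-viri-est-rad} is tailored to the blow-up proof, where an \emph{upper} bound on $\frac{d}{dt}\Mcal_{\varphi_R}$ is what is needed.)

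The correct route is to decompose around the \emph{localized} Pohozaev quantity rather than the global $G$: since $\varphi_R(x)=|x|^2$ on $\{|x|\le R\}$, one has $\frac{d}{dt}\Mcal_{\varphi_R}=8\big(\int_{|x|\le R}(|\nabla u|^2+|\nabla v|^2)\,dx-3\int_{|x|\le R}N\,dx\big)-\int\Delta^2\varphi_R(|u|^2+|v|^2)\,dx+4\int_{|x|>R}\varphi_R''(|\partial_r u|^2+|\partial_r v|^2)\,dx-4\int_{|x|>R}\Delta\varphi_R\, N\,dx$, where now the exterior gradient contribution carries a \emph{plus} sign ($\varphi_R''\ge 0$) and may legitimately be dropped for a lower bound, while the remaining exterior terms are $O(R^{-2})$. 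One then compares $\int_{|x|\le R}(|\nabla u|^2+|\nabla v|^2)-3\int_{|x|\le R}N$ with $K(\varrho_Ru,\varrho_Rv)-3P(\varrho_Ru,\varrho_Rv)$ (the commutators are either non-negative or $O(R^{-2})$ by mass conservation and radial Sobolev) and applies \eqref{coer-prop}. A further minor slip: your Gagliardo--Nirenberg step invokes $\|\nabla u\|^2_{L^2(|x|\le R)}$, which is not controlled by your space--time gradient bound on $\{|x|\le R/2\}$; run both steps with the same cut-off $\varrho_R$ and conclude on $\{|x|\le R/2\}$ where $\varrho_R\equiv 1$.
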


\begin{proof}
Let $\varphi_R$ be as in \eqref{defi-varphi-R} and define $\Mcal_{\varphi_R}(t)$ as in \eqref{defi-M-varphi}. By the Cauchy-Schwarz inequality, the conservation of mass, and \eqref{est-K}, we have
\begin{align} \label{est-M-varphi-R}
\sup_{t\in \R} |\Mcal_{\varphi_R}(t)| \lesssim R.
\end{align} 
By \eqref{cor:iii}, we have
\begin{align*}
\frac{d}{dt} \Mcal_{\varphi_R}(t) &= - \int \Delta^2 \varphi_R(x) (|u|^2+|v|^2)(t,x) dx + 4\int\varphi''_R(r) (|\nabla u|^2+|\nabla v|^2)(t,x)\\
&- 4 \int\Delta \varphi_R(x) N(u,v)(t,x) dx.
\end{align*}
As $\varphi_R(x)=|x|^2$ for $|x|\leq R$, we see that
\begin{align*}
\frac{d}{dt} \Mcal_{\varphi_R}(t) &= 8 \( \int_{|x|\leq R} (|\nabla u|^2+|\nabla v|^2)(t,x) dx - 3 \int_{|x|\leq R} N(u,v)(t,x) dx \) \\
&- \int \Delta^2\varphi_R(x) (|u|^2+|v|^2)(t,x) dx + 4 \rea \int_{|x| >R} \partial^2_{jk} \varphi_R(x) (\partial_j \overline{u} \partial_k u + \partial_j \overline{v} \partial_k v) (t,x) dx \\
&- 4 \int_{|x|>R} \Delta \varphi_R(x) N(u,v)(t,x) dx.
\end{align*}
Since $\|\Delta^2\varphi_R\|_{L^\infty } \lesssim R^{-2}$, the conservation of mass implies
\[
\int  \Delta^2 \varphi_R(x) (|u|^2+|v|^2)(t,x) dx \lesssim R^{-2}.
\]
As $(u,v)$ is radially symmetric, we use the fact
\[
\partial_j = \frac{x_j}{r} \partial_r, \quad\partial^2_{jk} =\(\frac{\delta_{jk}}{r} - \frac{x_j x_k}{r^3}\) \partial_r + \frac{x_j x_k}{r^2} \partial^2_r
\]
to get
\[
\partial^2_{jk} \varphi_R(x) \partial_j \overline{u}(t,x) \partial_k u(t,x) = \varphi''_R(r) |\partial_r u(t,r)|^2 \geq 0
\]
which implies 
\[
\rea \int_{|x|>R} \partial^2_{jk}\varphi_R(x) (\partial_j \overline{u}\partial_k u + \partial_j \overline{v} \partial_k v)(t,x) \geq 0.
\]
On the other hand, by arguing as in the proof of Lemma \ref{lem-viri-est-rad}, we have
\[
\left| \int_{|x|>R} \Delta \varphi_R(x) N(u,v)(t,x) dx \right| \lesssim R^{-2} K(u(t),v(t)) \lesssim R^{-2}.
\] 
Thus we get
\begin{align} \label{est-M-varphi-R-app}
\frac{d}{dt} \Mcal_{\varphi_R}(t) \geq 8 \( \int_{|x|\leq R} (|\nabla u|^2+|\nabla v|^2)(t,x) dx - 3 \int_{|x|\leq R} N(u,v)(t,x) dx \) + C R^{-2}
\end{align}
for all $t\in \R$. Now, let $\varrho_R(x)=\varrho(x/R)$ with $\varrho$ as in \eqref{defi-varrho}. We have
\begin{align*}
\int  |\nabla(\varrho_R u(t))|^2 dx &= \int \varrho^2_R |\nabla u(t)|^2 dx - \int \varrho_R \Delta\varrho_R |u(t)|^2 dx \\
&= \int_{|x|\leq R} |\nabla u(t)|^2 - \int_{R/2 \leq |x| \leq R} (1-\varrho^2_R) |\nabla u(t)|^2 dx - \int \varrho_R \Delta\varrho_R |u(t)|^2 dx
\end{align*}
and 
\[
\int N(\varrho_R u, \varrho_R v)(t,x) dx = \int_{|x| \leq R} N(u,v)(t,x) dx + \int_{R/2\leq |x| \leq R} \(N(\varrho_R u, \varrho_R v) - N(u,v) \)(t,x) dx.
\]
It follows that
\begin{align*}
\int_{|x|\leq R} &(|\nabla u|^2 +|\nabla v|^2)(t,x) dx - 3 \int_{|x|\leq R} N(u,v)(t,x) dx \\
&=\int (|\nabla(\varrho_R u)|^2 + |\nabla(\varrho_R v)|^2)(t,x) dx - 3\int  N(\varrho_R u, \varrho_R v)(t,x) dx \\
&+ \int_{R/2 \leq |x| \leq R} (1-\varrho_R^2(x)) (|\nabla u|^2 +|\nabla v|^2) (t,x) dx \\
&+ \int  \varrho_R(x) \Delta \varrho_R(x) (|u|^2+|v|^2) (t,x) dx - 3\int_{R/2 \leq |x| \leq R} \( N(\varrho_R u, \varrho_R v) - N(u,v)\) (t,x) dx.
\end{align*}
As $0 \leq \rho_R \leq 1$ and $\|\Delta \varrho_R\|_{L^\infty } \lesssim R^{-2}$, the conservation of mass, \eqref{est-K}, and the radial Sobolev embedding, we have
\begin{align*}
\int_{|x|\leq R} (|\nabla u|^2 +|\nabla v|^2)(t,x) dx &- 3 \int_{|x|\leq R} N(u,v)(t,x) dx \\
&\geq K(\varrho_R u(t), \varrho_R v(t))- 3 P(\varrho_R u(t), \varrho_R v(t)) + \Oc(R^{-2}).
\end{align*} 
Thanks to \eqref{coer-prop} with $\varrho_R$ in place of $\Gamma_R$ and $z=\xi_1= \xi_2 =0$, there exist $R=R(u_0,v_0,\phi,\psi)>0$ sufficiently large and $\nu=\nu(u_0,v_0,\phi,\psi)>0$ such that
\[
\int_{|x|\leq R} (|\nabla u|^2 +|\nabla v|^2)(t,x) dx - 3 \int_{|x|\leq R} N(u,v)(t,x) dx \geq \nu K(\varrho_R u(t), \varrho_R v(t)) + \Oc(R^{-2})
\]
for all $t\in \R$. This together with \eqref{est-M-varphi-R-app} yield
\[
\nu K(\varrho_R u(t), \varrho_R v(t)) \leq \frac{d}{dt} \Mcal_{\varphi_R}(t) + C R^{-2}
\]
for all $t\in \R$. Integrating on $[0, T]$ and using \eqref{est-M-varphi-R}, we get
\[
\frac{1}{T}\int^{T}_{0}K(\varrho_{R}u(t), \varrho_{R}v(t))dt\lesssim
\frac{R}{T}+\frac{1}{R^{2}}.
\]
In particular, we have
\[
\frac{1}{T}\int^{T}_{0}\|\nabla(\rho_{R}u(t))\|^{2}_{L^{2} }dt\lesssim
\frac{R}{T}+\frac{1}{R^{2}}
\]
which together with the Gagliardo-Nirenberg inequality 
\[
\|  u \|^{\frac{10}{3}}_{L^{\frac{10}{3}} }\lesssim \| \nabla u \|^{2}_{L^{2} } 
\|  u \|^{\frac{4}{3}}_{L^{2} } 
\]
imply
\[
\frac{1}{T}\int^{T}_{0}\|\rho_{R}u(t)\|^{\frac{10}{3}}_{L^{\frac{10}{3}} }dt  
\lesssim \frac{1}{T}\int^{T}_{0}\|\nabla(\rho_{R}u(t))\|^{2}_{L^{2} }dt\lesssim
\frac{R}{T}+\frac{1}{R^{2}}.
\]
By the choice of $\varrho_R$, we obtain
\[
\frac{1}{T}\int^{T}_{0}\int_{|x|\leq \frac{R}{2}}|u(t,x)|^{\frac{10}{3}}dxdt\lesssim
\frac{R}{T}+\frac{1}{R^{2}}.
\]
A similar estimate holds for $v$. The proof is complete.
\end{proof}

\section{Scattering criteria}\label{sec:sct}

In this section, we give scattering criteria for solution to \eqref{SNLS} in the  spirit of Dodson and Murphy \cite{DM-MRL, DM-PAMS} (see also \cite{WY}). Let us start with the scattering criterion for non-radial solutions.
\begin{proposition} \label{prop-scat-crit-non-rad}
Let $\mu, \gamma>0$. Suppose that $(u, v)$ is a global $H^1$-solution to \eqref{SNLS} satisfying 
\begin{equation}\label{Est1}
\sup_{t\in \R}\| (u(t),v(t))  \|_{H^{1} \times H^{1} }
\lesssim E
\end{equation}
for some constant $E>0$. Then there exist $\epsilon=\epsilon(E)>0$  small enough and $T_{0}=T_{0}(\epsilon, E)>0$  sufficiently large such that if for any $a\in \R$, there exists $t_0\in(a,a+T_{0})$ such that
\begin{equation}\label{Dtn}
\| (u(t),v(t))  \|_{L^{5}_{t,x}\times L^{5}_{t,x}([t_0-\epsilon^{-\frac{1}{4}}, t_0] \times \R^3)} \lesssim \epsilon,
\end{equation}
then the solution scatters forward in the time.
\end{proposition}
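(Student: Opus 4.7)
The plan is to verify the hypothesis of the small data scattering criterion Lemma~\ref{lem-smal-scat} at time $T^{\ast}=t_{0}$, namely to show that
\[
\|(\Sc_{1}(t-t_{0})u(t_{0}),\Sc_{2}(t-t_{0})v(t_{0}))\|_{L^{4}_{t}L^{6}_{x}\times L^{4}_{t}L^{6}_{x}([t_{0},\infty)\times \R^{3})}<\epsilon_{\sd}(E).
\]
Setting $T_{1}:=t_{0}-\epsilon^{-1/4}$, I would apply Duhamel's formula on $[T_{1},t_{0}]$ to obtain, for $t\geq t_{0}$,
\[
\Sc_{1}(t-t_{0})u(t_{0})=\Sc_{1}(t-T_{1})u(T_{1})-i\int_{T_{1}}^{t_{0}}\Sc_{1}(t-s)F_{1}(u,v)(s)\,ds,
\]
where $F_{1}(u,v)=-\bigl(\tfrac{1}{9}|u|^{2}+2|v|^{2}\bigr)u-\tfrac{1}{3}\overline{u}^{2}v$, with an analogous decomposition for the $v$-component (using $\Sc_{2}$ and $F_{2}$). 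The task then reduces to estimating these two summands on $[t_{0},\infty)$, and to performing the same analysis for the $v$-component.

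For the Duhamel remainder, I would use Strichartz estimates at the $\dot H^{1/2}$-level in $\R^{3}$ (both $(4,6)$ and $(5,5)$ are $\dot H^{1/2}$-admissible), together with the cubic structure of $F_{1}$ and H\"older's inequality, to bound
\[
\Big\|\int_{T_{1}}^{t_{0}}\Sc_{1}(t-s)F_{1}(u,v)\,ds\Big\|_{L^{4}_{t}L^{6}_{x}([t_{0},\infty))}\lesssim \|F_{1}(u,v)\|_{L^{5/3}_{t,x}([T_{1},t_{0}])}\lesssim\|(u,v)\|^{3}_{L^{5}_{t,x}\times L^{5}_{t,x}([T_{1},t_{0}])}\lesssim\epsilon^{3},
\]
by the hypothesis \eqref{Dtn}. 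This piece, and the analogous $v$-remainder, are therefore negligible once $\epsilon$ is small enough relative to $\epsilon_{\sd}(E)$.

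The central difficulty lies in estimating the homogeneous term $\|\Sc_{1}(t-T_{1})u(T_{1})\|_{L^{4}_{t}L^{6}_{x}([t_{0},\infty))}=\|\Sc_{1}(\tau)u(T_{1})\|_{L^{4}_{\tau}L^{6}_{x}([\epsilon^{-1/4},\infty))}$, which carries no inherent smallness from the hypothesis. Following the Dodson-Murphy strategy, I would exploit the time gap $\epsilon^{-1/4}$ via a Littlewood-Paley splitting $u(T_{1})=P_{\leq N}u(T_{1})+P_{>N}u(T_{1})$ for a parameter $N$ to be chosen: the high-frequency piece obeys $\|P_{>N}u(T_{1})\|_{\dot H^{1/2}}\lesssim N^{-1/2}E$, so $\dot H^{1/2}$-Strichartz provides a global $L^{4}_{t}L^{6}_{x}(\R)$ bound of the same order; the low-frequency piece benefits from the dispersive decay $\|\Sc_{1}(\tau)P_{\leq N}f\|_{L^{6}_{x}}\lesssim\tau^{-1}\|P_{\leq N}f\|_{L^{6/5}_{x}}$, and Bernstein-type estimates convert the $H^{1}$-bound into an $L^{6/5}$-bound up to a power of $N$, yielding a tail of the form $N^{\alpha}E\,\epsilon^{3/4}$ after integrating $\tau^{-1}$ in $L^{4}_{\tau}([\epsilon^{-1/4},\infty))$. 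Balancing in $N=N(\epsilon,E)$ forces the homogeneous tail to be of order $\epsilon^{\beta}$ for some $\beta=\beta(E)>0$.

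Combining the Duhamel and homogeneous estimates with the analogous argument for $v$, choosing $\epsilon=\epsilon(E)>0$ small and $T_{0}=T_{0}(\epsilon,E)>0$ large ensures the criterion of Lemma~\ref{lem-smal-scat} is satisfied at $T^{\ast}=t_{0}$, whence forward scattering. I expect the main obstacle to be precisely this control of the homogeneous tail $\Sc_{1}(t-T_{1})u(T_{1})$: the hypothesis provides no direct smallness for this piece, and one must delicately interpolate between low- and high-frequency dispersive behavior across the time gap $\epsilon^{-1/4}$, while keeping all constants uniform in $u(T_{1})$ subject only to the a priori $H^{1}$-bound $E$.
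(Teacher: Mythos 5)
Your reduction to Lemma~\ref{lem-smal-scat} and your treatment of the Duhamel contribution from the recent past $[t_0-\epsilon^{-1/4},t_0]$ (via the $L^5_{t,x}$ smallness and the cubic structure) are in the spirit of the paper's proof. The genuine gap is in the homogeneous term: you cannot make $\|\Sc_1(\tau)u(T_1)\|_{L^4_\tau L^6_x([\epsilon^{-1/4},\infty))}$ small knowing only $\|u(T_1)\|_{H^1}\leq E$, and no Littlewood--Paley splitting repairs this. First, the low-frequency step is broken as stated: Bernstein estimates only increase Lebesgue exponents, so an $H^1$ (or $L^2$) bound on $P_{\leq N}u(T_1)$ gives no control of $\|P_{\leq N}u(T_1)\|_{L^{6/5}}$, and $H^1(\R^3)$ does not embed into $L^{6/5}$. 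Second, and more fundamentally, the claim is false uniformly over the $H^1$ ball: taking $u(T_1)=\Sc_1(-M)g$ for a fixed nice $g$ and $M$ huge, one has $\Sc_1(\tau)u(T_1)=\Sc_1(\tau-M)g$, whose $L^4_\tau L^6_x$ norm on $[\epsilon^{-1/4},\infty)$ is essentially the full global Strichartz norm of $g$, with no smallness. The time gap $\epsilon^{-1/4}$ buys nothing for the free evolution of an arbitrary bounded $H^1$ state. Note also that your argument never uses the ``for any $a\in\R$'' in the hypothesis, which is a warning sign: a single small $L^5_{t,x}$ window somewhere in time cannot imply scattering.

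The paper resolves this by pushing Duhamel all the way back to $t=0$: it writes $\Sc_1(t-T)u(T)=\Sc_1(t)u_0+i\int_0^T\Sc_1(t-s)F_1(s)\,ds$ and splits the integral at $T-\epsilon^{-1/4}$. The free evolution of the \emph{fixed} datum $u_0$ is made small on $[T,\infty)$ by the monotone convergence theorem once $T>T_1(u_0,v_0,\epsilon)$ --- this is exactly where the freedom to choose $a=T_1$ (hence $t_0>T_1$) is used, see \eqref{Cdt}. The distant-past piece $H_1$ over $[0,T-\epsilon^{-1/4}]$ is then not estimated by Strichartz smallness at all, but by the dispersive estimate: since $\|F_j(s)\|_{L^1_x}\lesssim\|(u,v)\|^3_{H^1\times H^1}\lesssim E^3$ uniformly, one gets $\|H_1(t)\|_{L^\infty_x}\lesssim\int_0^{T-\epsilon^{-1/4}}|t-s|^{-3/2}ds\lesssim (t-T+\epsilon^{-1/4})^{-1/2}$, whose $L^4_t([T,\infty))$ norm is $O(\epsilon^{1/16})$; interpolating against the Strichartz bound in $L^4_tL^3_x$ yields $\|H_1\|_{L^4_tL^6_x}\lesssim\epsilon^{1/32}$. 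This is the step your proposal is missing: the smallness over the time gap comes from the $|t-s|^{-3/2}$ decay acting on the \emph{nonlinearity} (which lies in $L^1_x$ because it is cubic and $H^1\hookrightarrow L^3$), not from the linear flow of $u(T_1)$ itself.
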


\begin{proof}
By Lemma \ref{lem-smal-scat}, it suffices to show that there exists $T>0$ such that 
\begin{equation}\label{Oc}
\| (\Sc_{1}(t-T)u(T),\Sc_{2}(t-T)v(T))  \|_{L_{t}^{4}L^{6}_{x}\times L_{t}^{4}L^{6}_{x}([T,\infty)\times \R^{3})} \lesssim \epsilon^{\frac{1}{32}}.
\end{equation} 
To prove \eqref{Oc}, we first write
\[
(\Sc_{1}(t-T)u(T),\Sc_{2}(t-T)v(T))=(\Sc_{1}(t)u_{0},\Sc_{2}(t)v_{0})+i \int_0^T (\Sc_1(t-s)F_1(s), \Sc_2(t-s)F_2(s)) ds.
\]
By Sobolev embedding, Strichartz estimates, and the monotone convergence theorem, there exists
$T_{1}>0$ sufficiently large such that if $T>T_{1}$, then
\begin{equation}\label{Cdt}
\|(\Sc_{1}(t)u_{0},\Sc_{2}(t)v_{0})\|_{L_{t}^{4}L^{6}_{x}\times L_{t}^{4}L^{6}_{x}([T,\infty)\times \R^{3})} \lesssim \epsilon.
\end{equation}
We take $a=T_{1}$ and $T=t_0$, where $a$ and $t_0$ are as in \eqref{Dtn}, we write
\[
i \int_0^T (\Sc_1(t-s)F_1(s), \Sc_2(t-s)F_2(s)) ds =: H_1(t) + H_2(t),
\]
where 
\[
H_{j}(t)=i\int_{I_{j}}(\Sc_{1}(t-s)F_{1}(s), \Sc_{2}(t-s)F_{2}(s))ds, \quad I_{1}=[0, T-\epsilon^{-\frac{1}{4}}], \quad 
I_{2}=[T-\epsilon^{-\frac{1}{4}}, T].
\]
To estimate $H_2$, we observe that
\begin{equation}\label{on1}
\|(u,v)\|_{L_{t}^{2}\dot{W}^{\frac{1}{2},6}_{x}\times L_{t}^{2}\dot{W}^{\frac{1}{2},6}_{x}([T-\epsilon^{-\frac{1}{4}}, T]\times \R^{3})}
\lesssim 1.
\end{equation}
Indeed, by Strichartz estimates, fractional chain rule, \eqref{Est1}, and \eqref{Dtn}, we have
\begin{align*}
\|(u,v)\|&_{L_{t}^{2}\dot{W}^{\frac{1}{2},6}_{x}\times L_{t}^{2}\dot{W}^{\frac{1}{2},6}_{x}([T-\epsilon^{-\frac{1}{4}}, T]\times \R^{3})}
\\
&\lesssim E+ \| (u,v) \|^{2}_{L^{5}_{t,x}\times L^{5}_{t,x}([T-\epsilon^{-\frac{1}{4}}, T] \times \R^3)}
\|(u,v)\|_{L_{t}^{2}\dot{W}^{\frac{1}{2},6}_{x}\times L_{t}^{2}\dot{W}^{\frac{1}{2},6}_{x}([T-\epsilon^{-\frac{1}{4}}, T]\times \R^{3})}\\
&\lesssim E+ \epsilon^2\|(u,v)\|_{L_{t}^{2}\dot{W}^{\frac{1}{2},6}_{x}\times L_{t}^{2}\dot{W}^{\frac{1}{2},6}_{x}([T-\epsilon^{-\frac{1}{4}}, T]\times \R^{3})}.
\end{align*}
By choosing  $\epsilon$ small enough, we get \eqref{on1}. By Sobolev embedding and Strichartz estimates, we see that
\begin{align*}
\|H_{2}\|_{L_{t}^{4}L^{6}_{x}\times L_{t}^{4}L^{6}_{x}([T,\infty)\times \R^{3})} \lesssim \|(u,v)\|^{2}_{L_{t,x}^{5}\times L_{t,x}^{5}([T-\epsilon^{-\frac{1}{4}}, T]\times \R^{3})}
\|(u,v)\|_{L_{t}^{2}\dot{W}^{\frac{1}{2},6}_{x}\times L_{t}^{2}\dot{W}^{\frac{1}{2},6}_{x}([T-\epsilon^{-\frac{1}{4}}, T]\times \R^{3})}
\end{align*}
which together with \eqref{Dtn} and \eqref{on1} imply
\begin{equation}\label{Pii}
\|H_{2}\|_{L_{t}^{4}L^{6}_{x}\times L_{t}^{4}L^{6}_{x}([T,\infty)\times \R^{3})}\lesssim \epsilon^2.
\end{equation}

On the other hand, we claim that
\begin{equation}\label{cla1}
\|H_{1}\|_{L_{t}^{4}L^{6}_{x}\times L_{t}^{4}L^{6}_{x}([T,\infty)\times \R^{3})}\lesssim \epsilon^{\frac{1}{32}}.
\end{equation}
In fact, we notice that 
\[
H_{1}(t)=(\Sc_{1}(t-T+\epsilon^{-\frac{1}{4}})u(T-\epsilon^{-\frac{1}{4}}), \Sc_{2}(t-T+\epsilon^{-\frac{1}{4}})u(T-\epsilon^{-\frac{1}{4}}))
-(\Sc_{1}(t)u_{0}, \Sc_{2}(t)v_{0})
\]
which, by Strichartz estimates, implies
\begin{equation*}
\|H_{1}\|_{L_{t}^{4}L^{3}_{x}\times L_{t}^{4}L^{3}_{x}([T,\infty)\times \R^{3})}
\lesssim \|(u(T-\epsilon^{-\frac{1}{4}}), v(T-\epsilon^{-\frac{1}{4}}))\|_{L^2\times L^2 } + \|(u_{0}, v_{0})\|_{L^2 \times L^2}\lesssim E.
\end{equation*}
Moreover, as
\[
\|(F_{1}(t), F_{2}(t))\|_{L^1 \times L^1 } \lesssim \|(u(t),v(t))\|^3_{L^3 \times L^3 } \lesssim \| (u(t),v(t)) \|^{3}_{H^{1}\times H^{1} }
\lesssim E^{3},
\] 
we have from the dispersive estimate \eqref{Dpe} and Young's inequality that
\begin{equation*}
\|H_{1}\|_{L_{t}^{4}L^{\infty}_{x}\times L_{t}^{4}L^{\infty}_{x}([T,\infty)\times \R^{3})}
\lesssim
\left\|\int^{T-\epsilon^{-\frac{1}{4}}}_{0}|t-s|^{-3/2} ds\right\|_{L^{4}_{t}([T,\infty))}
\lesssim \epsilon^{\frac{1}{16}}.
\end{equation*}
By interpolation, we get
\begin{align*}
\|H_{1}\|_{L_{t}^{4}L^{6}_{x}\times L_{t}^{4}L^{6}_{x}([T,\infty)\times \R^{3})} \leq \|H_{1}\|^{1/2}_{L_{t}^{4}L^{3}_{x}\times L_{t}^{4}L^{3}_{x}([T,\infty)\times \R^{3})}
\|H_{1}\|^{1/2}_{L_{t}^{4}L^{\infty}_{x}\times L_{t}^{4}L^{\infty}_{x}([T,\infty)\times \R^{3})}
\lesssim \epsilon^{\frac{1}{32}}
\end{align*}
which proves \eqref{cla1}. Collecting \eqref{Cdt}, \eqref{Pii}, and \eqref{cla1}, we obtain \eqref{Oc}, and the proof is complete.
\end{proof}

Let us give now an analogous of the previous Criterion in the radial setting. 

\begin{proposition}[Scattering criterion for radial solutions]\label{prop-scat-crit-rad}
Let $\mu,\gamma>0$. Suppose that $(u,v)$ is a global $H^1$-solution to \eqref{SNLS} satisfying
\begin{equation}\label{Lrs}
\sup_{t\in \R}\|(u(t),v(t))\|_{H^{1} \times H^{1}}\leq E
\end{equation}
for some constant $E>0$. Then there exist $\epsilon=\epsilon(E)>0$ and $R=R(E)>0$ such that if
\begin{equation}\label{Taos}
\liminf_{t\rightarrow \infty}\int_{|x|\leq R} \(|u(t,x)|^{2}+3\gamma|v(t,x)|^{2}\)dx\leq \epsilon^{2},
\end{equation}
then the solution scatters forward in time.
\end{proposition}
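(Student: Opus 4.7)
The plan is to invoke the small-data scattering criterion of Lemma~\ref{lem-smal-scat}: it suffices to produce a time $T > 0$ such that
\[
\|(\mathcal{S}_{1}(t-T)u(T),\mathcal{S}_{2}(t-T)v(T))\|_{L^{4}_{t}L^{6}_{x}\times L^{4}_{t}L^{6}_{x}([T,\infty)\times\R^{3})}\leq\epsilon_{\sd}(E).
\]
By the liminf hypothesis~\eqref{Taos}, we can pick $T$ arbitrarily large with $\int_{|x|\leq R}(|u(T,x)|^2+3\gamma|v(T,x)|^2)\,dx\leq 2\epsilon^2$; the task is to upgrade this one-time localized-mass smallness into smallness of the linear Strichartz tail on $[T,\infty)$.

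Introduce a smooth radial cutoff $\chi_R$ with $\chi_R\equiv 1$ on $\{|x|\leq R/2\}$ and $\supp\chi_R\subset\{|x|\leq R\}$, and decompose $u(T)=\chi_R u(T)+(1-\chi_R)u(T)=:u_{\rm in}+u_{\rm out}$ (similarly $v(T)=v_{\rm in}+v_{\rm out}$). The hypothesis gives $\|u_{\rm in}\|_{L^2},\|v_{\rm in}\|_{L^2}\lesssim\epsilon$; interpolation with the uniform $H^1$-bound yields $\|u_{\rm in}\|_{\dot H^{1/2}}\lesssim\epsilon^{1/2}E^{1/2}$, and the Strichartz estimate gives
\[
\|\mathcal{S}_1(t-T)u_{\rm in}\|_{L^4_tL^6_x(\R\times\R^3)}\lesssim\epsilon^{1/2}E^{1/2},
\]
which is arbitrarily small for $\epsilon$ small, and analogously for $v_{\rm in}$. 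This takes care of the interior contributions.

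The main obstacle is controlling the exterior pieces $u_{\rm out}, v_{\rm out}$, whose $L^2$-masses are merely bounded by $E$ and hence not small. Here one exploits the radial Sobolev embedding~\eqref{rad-sobo}, providing the pointwise bound $|u_{\rm out}(x)|\lesssim E|x|^{-1}$; in particular $u_{\rm out}\in L^p$ with $\|u_{\rm out}\|_{L^p}\lesssim E R^{3/p-1}\to 0$ as $R\to\infty$ for every $p>3$. Combining this $L^p$-decay with the dispersive and Strichartz estimates for the linear Schr\"odinger semigroup applied to radial data supported away from the origin -- via a Littlewood--Paley frequency decomposition that uses the $H^1$-bound to gain a factor $N^{-1/2}$ at high frequencies and the $L^p$-smallness together with Bernstein to handle low frequencies -- one shows $\|\mathcal{S}_1(t-T)u_{\rm out}\|_{L^4_tL^6_x([T,\infty))}=o_{R\to\infty}(1)$ uniformly in $T$, and analogously for $v_{\rm out}$. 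Choosing first $R$ large and then $\epsilon$ correspondingly small brings the full Strichartz tail below $\epsilon_{\sd}(E)$, and Lemma~\ref{lem-smal-scat} concludes the proof.
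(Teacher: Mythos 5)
Your reduction to Lemma~\ref{lem-smal-scat} and your treatment of the interior piece are fine: $\|u_{\rm in}\|_{L^2}\lesssim\epsilon$ together with the uniform $H^1$ bound gives $\|u_{\rm in}\|_{\dot H^{1/2}}\lesssim \epsilon^{1/2}E^{1/2}$, and $\|\Sc_1(t)f\|_{L^4_tL^6_x}\lesssim\|f\|_{\dot H^{1/2}}$ (via the $L^2$-admissible pair $(4,3)$ and Sobolev embedding) handles $u_{\rm in}$. The genuine gap is in the exterior piece. The Strichartz estimate for $L^4_tL^6_x$ is an $\dot H^{1/2}$-level estimate, so after Littlewood--Paley you need $\sum_N N^{1/2}\|P_N u_{\rm out}\|_{L^2}$ to be small. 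High frequencies indeed give $N_0^{-1/2}E$, but for the low frequencies your proposed mechanism --- ``$L^p$-smallness together with Bernstein'' --- does not close: Bernstein only moves from a \emph{lower} to a \emph{higher} Lebesgue exponent, so the bound $\|u_{\rm out}\|_{L^p}\lesssim ER^{3/p-1}$ with $p>3$ cannot be converted into smallness of $\|P_Nu_{\rm out}\|_{L^2}$ or $\|P_Nu_{\rm out}\|_{\dot H^{1/2}}$. The low-frequency part of $u_{\rm out}$ can carry $L^2$-mass of size $E$ (e.g.\ a radial shell $R^{-1}\phi(|x|-R)$ has unit $H^1$ and $\dot H^{1/2}$ norms, is supported in $|x|\ge R-1$, and satisfies your pointwise and $L^p$ bounds), and the naive low-frequency sum is only $\lesssim N_0^{1/2}E$. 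So the claimed uniform smallness $\|\Sc_1(t-T)u_{\rm out}\|_{L^4_tL^6_x([T,\infty))}=o_{R\to\infty}(1)$ is not established; proving a purely linear statement of this type for radial exterior data would require a substantially different (and delicate) argument than the one sketched.

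The paper avoids this obstruction by never estimating the linear evolution of the spatially truncated data. Instead it uses the Duhamel formula to write $\Sc_1(t-T)u(T)=\Sc_1(t)u_0+H_1+H_2$, splitting the Duhamel integral in \emph{time}: the distant past $H_1$ is small by the dispersive estimate \eqref{Dpe} (the $|t-s|^{-3/2}$ kernel integrated over $s\le T-\epsilon^{-1/4}$, $t\ge T$, gives $\epsilon^{1/16}$ after interpolation), while the recent past $H_2$, over an interval of length $\epsilon^{-1/4}$, is controlled by first \emph{propagating} the localized-mass smallness from the single time $T$ to all of $I_2$ using the divergence identity $\partial_t(|u|^2+3\gamma|v|^2)=-2\nabla\cdot\IM(\overline u\nabla u)-6\nabla\cdot\IM(\overline v\nabla v)$ (this is precisely why the weight $3\gamma$ appears in \eqref{Taos}) together with $\|\nabla\varrho_R\|_{L^\infty}\lesssim R^{-1}$, and then combining this with the radial Sobolev embedding to get $\|(u,v)\|_{L^\infty_tL^3_x(I_2)}\lesssim\epsilon^{1/2}$, which feeds into a trilinear Strichartz bound for $H_2$. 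Note that on the exterior the paper only needs $\|(1-\varrho_R)u\|_{L^3_x}\lesssim R^{-1/3}$, an estimate of the solution itself at fixed time, not of a space-time norm of its free evolution --- this is exactly the step your spatial decomposition cannot reproduce. To repair your proof you should adopt this time-splitting of the Duhamel formula and the mass-propagation step.
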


\begin{proof}
Let $\epsilon>0$ be a small constant. By Lemma \ref{lem-smal-scat}, it suffices to show the existence of $T=T(\epsilon)>0$ such that
\begin{align} \label{scat-crit-app}
\| (\Sc_{1}(t-T)u(T),\Sc_{2}(t-T)v(T)) \|_{L_{t}^{4}L^{6}_{x}\times L_{t}^{4}L^{6}_{x}([T,\infty)\times \R^{3})}<\epsilon^{\frac{1}{32}}.
\end{align}
To show this, we follow the argument of \cite[Lemma 2.2]{DM-PAMS}. By the Strichartz estimates and the monotone
convergence theorem, there exists $T=T(\epsilon)>0$ sufficiently large such that
\begin{equation}\label{Cddf}
\|(\Sc_{1}(t)u_{0},\Sc_{2}(t)v_{0})\|_{L_{t}^{4}L^{6}_{x}\times L_{t}^{4}L^{6}_{x}([T,\infty)\times \R^{3})}<\epsilon.
\end{equation}
As in the proof of Proposition \ref{prop-scat-crit-non-rad}, we write
\[
(\Sc_{1}(t-T)u(T),\Sc_{2}(t-T)v(T)) = (\Sc_{1}(t)u_{0},\Sc_{2}(t)v_{0})+H_{1}(t)+H_{2}(t),
\]
where 
\[
H_{j}(t)= i\int_{I_{j}}(\Sc_{1}(t-s)F_{1}(s), \Sc_{2}(t-s)F_{2}(s))ds, \quad I_{1}=[0, T-\epsilon^{-\frac{1}{4}}], \quad 
I_{2}=[T-\epsilon^{-\frac{1}{4}},T].
\]
By \eqref{Taos} and enlarging $T$ if necessary, we have
\begin{equation}\label{Tapro}
\int\varrho_{R}(x)\(|u(T,x)|^{2}+3\gamma|v(T,x)|^{2}\)dx\leq \epsilon^{2},
\end{equation}
where $\varrho_{R}(x)=\varrho(x/R)$ with $\varrho:\R^3 \rightarrow [0,1]$ a smooth cut-off function satisfying 
\begin{align} \label{defi-varrho}
\varrho(x) = \left\{
\begin{array}{ccl}
1 &\text{if}& |x| \leq 1/2, \\
0 &\text{if}& |x| \geq 1.
\end{array}
\right. 
\end{align}
Using the fact (see Lemma \ref{Imporide}) that
\[
\partial_{t}(|u|^{2}+3\gamma|v|^{2})=-2\nabla\cdot\IM (\overline{u}\nabla u)
-6\nabla\cdot\IM (\overline{v}\nabla v),
\]
\eqref{Lrs}, and $\|\nabla \varrho_{R}\|_{L^\infty(\R^3)}\lesssim R^{-1}$, an integration by parts and the H\"older inequality yield
\[
\left| \partial_{t}\int\varrho_{R}(x) (|u(t,x)|^{2}+3\gamma|v(t,x)|^{2}) dx  \right|\lesssim R^{-1}.
\]
Taking $R$ sufficient large such that $R^{-1}\epsilon^{-\frac{1}{4}}\ll \epsilon^{2}$, we infer from \eqref{Tapro} that
\[
\left\|\int \varrho_{R}(x)(|u(\cdot,x)|^{2}+3\gamma|v(\cdot,x)|^{2})dx\right\|_{L_{t}^{\infty}(I_{2})}
\lesssim \epsilon^2.
\]
This inequality implies that
\begin{equation}\label{Igf}
\| \varrho_R u  \|_{L_{t}^{\infty}L^{2}_{x}(I_{2}\times \R^{3})}\lesssim \epsilon   \quad \mbox{and}\quad  \| \varrho_R v  \|_{L_{t}^{\infty}L^{2}_{x}(I_{2}\times \R^{3})}\lesssim \epsilon.
\end{equation}
Thanks to the radial Sobolev embedding \eqref{rad-sobo}, we have from \eqref{Lrs} and \eqref{Igf} that
\begin{align*}
\|u\|_{L^\infty_t L^3_x(I_2\times \R^3)} &\leq \|\varrho_R u\|_{L^\infty_tL^3_x(I_2\times \R^3)} + \|(1-\varrho_R) u\|_{L^\infty_tL^3_x(I_2\times \R^3)} \\
&\lesssim \|\varrho_R\|^{1/2}_{L^\infty_tL^2_x(I_2\times \R^3)} \|\varrho_R u\|^{1/2}_{L^\infty_tL^6_x(I_2\times \R^3)} \\
& + \|(1-\varrho_R)u\|^{1/3}_{L^\infty_t L^\infty_x(I_2\times \R^3)} \|(1-\varrho_R) u\|^{2/3}_{L^\infty_tL^2_x(I_2\times \R^3)} \\
&\lesssim \epsilon^{\frac{1}{2}} + R^{-\frac{1}{3}} \lesssim \epsilon^{\frac{1}{2}}
\end{align*}
provided that $R>\epsilon^{-\frac{3}{2}}$. A similar estimate holds for $v$. In particular, we get
\begin{equation}\label{Estils}
\|(u,v)\|_{L_{t}^{\infty}L^{3}_{x}\times L_{t}^{\infty}L^{3}_{x} (I_{2}\times \R^{3})} \lesssim\epsilon^{\frac{1}{2}}.
\end{equation}
Moreover, we have from the local theory that
\[
\|(u,v)\|_{L_{t}^2L^{\infty}_{x}\times L_{t}^2L^{\infty}_{x} (I_2\times \R^3)} + 
\|(u,v)\|_{L_{t}^{2}\dot{W}^{\frac{1}{2},6}_{x}\times L_{t}^{2}\dot{W}^{\frac{1}{2},6}_{x} (I_2\times \R^3)}
\lesssim (1+|I_2|)^{\frac{1}{2}} \lesssim \epsilon^{-\frac{1}{8}}.
\]
By Sobolev embedding and Strichartz estimates, we see that that
\begin{multline} \label{est-H2}
\|H_{2}\|_{L_{t}^{4}L^{6}_{x}\times L_{t}^{4}L^{6}_{x}([T,\infty)\times \R^{3})} \\
\lesssim
\|(u,v)\|_{L^\infty_t L^3_x \times L^\infty_t L^3_x (I_2\times \R^3)} \|(u,v)\|_{L^2_tL^\infty_x \times L^2_t L^\infty_x (I_2\times \R^3)} \|(u,v)\|_{L^2_t \dot{W}^{\frac{1}{2},6}_x \times L^2_t \dot{W}^{\frac{1}{2},6}_x(I_2\times \R^3)} \lesssim \epsilon^{\frac{1}{4}}.
\end{multline}

On the other hand, the same argument developed in the proof of \eqref{cla1} shows that 
\begin{align} \label{est-H1}
\|H_{1}\|_{L_{t}^{4}L^{6}_{x}\times L_{t}^{4}L^{6}_{x}([T,\infty)\times \R^{3})}
\lesssim \epsilon^{\frac{1}{32}}.
\end{align}
Collecting \eqref{Cddf}, \eqref{est-H2}, and \eqref{est-H1}, we prove \eqref{scat-crit-app}, and the proof is complete.
\end{proof}

\section{Proofs of the main Theorems}\label{sec:proofs-main}

By exploiting the tools obtained in the previous parts of the paper, we are now able to prove the scattering for non-radial and radial solutions to \eqref{SNLS} given in Theorem \ref{Th1}. See \cite{MX,WY,XX} for analogous results for NLS systems of quadratic type. 

\subsection{Proof of the scattering results}

\begin{proof}[{Proof of Theorem \ref{Th1} for non-radial solutions}] 
It suffices to check the scattering criterion given in Proposition \ref{prop-scat-crit-non-rad}. To this end, we are inspired to \cite{XZZ}. Fix $a \in \R$ and let $\epsilon>0$ be a sufficiently small constant. Let $T_0=T_0(\epsilon)>0$ sufficiently large to be chosen later. We will show that there exists $t_0 \in (a, a+T_0)$ such that 
\begin{align} \label{scat-crit-non-rad-app}
\|(u,v)\|_{L^5_{t,x}\times L^5_{t,x}([t_0-\epsilon^{-\frac{1}{4}}, t_0] \times \R^3)} \lesssim \epsilon^{\frac{3}{140}}.
\end{align}
By Proposition \ref{Imnn}, there exist $T_0=T_0(\epsilon), J=J(\epsilon), R_0=R_0(\epsilon, u_0,v_0,\phi,\psi)$, $\sigma=\sigma(\epsilon)$, and $\eta=\eta(\epsilon)$ such that if $|\gamma-3|<\eta$, then 
\begin{multline*}
\frac{1}{JT_{0}}\int^{a+T_{0}}_{a}\int^{R_{0}e^{J}}_{R_{0}}\frac{1}{R^{3}}\int_{\R^{3}}
W_{\gamma}(\chi_{R}(\cdot-z)u(t),\chi_{R}(\cdot-z)v(t)) \\
\times K(\chi_{R}(\cdot-z)u^{\xi}(t),\chi_{R}(\cdot-z)v^{\xi}(t))dz\frac{dR}{R}dt \lesssim \epsilon.
\end{multline*}
It follows that there exists $R\in [R_{0}, e^{J}R_{0}]$ such that
\[
\frac{1}{T_{0}}\int^{a+T_{0}}_{a}\frac{1}{R^{3}}\int_{\R^{3}}
W_{\gamma}( \chi_{R}(\cdot-z)u(t),\chi_{R}(\cdot-z)v(t))K(\chi_{R}(\cdot-z)u^{\xi}(t),\chi_{R}(\cdot-z)v^{\xi}(t))dzdt
\lesssim \epsilon.
\]
In particular,
\[
\frac{1}{T_{0}}\int^{a+T_{0}}_{a}\frac{1}{R^{3}}\int 
 \|\chi_{R}(\cdot-z)u(t)\|^{2}_{L^{2}} \|\nabla\(\chi_{R}(\cdot-z)u^{\xi}(t)\)\|^{2}_{L^{2}}dzdt
\lesssim \epsilon
\]
and similarly for $v$. By the change of variable $z=\frac{R}{4}(w+\theta)$ with $w\in \Z^3$ and $\theta \in [0,1]^3$, we deduce from the integral mean value theorem and Fubini's theorem that there exists $\theta\in [0,1]^{3}$ such that
\[
\frac{1}{T_{0}}\int^{a+T_{0}}_{a}\sum_{w\in \Z^{3}}
\left\|\chi_{R} \(\cdot-\frac{R}{4}(w+\theta)\)u(t)\right\|^{2}_{L^{2}}
 \left\|\nabla\(\chi_{R}\(\cdot-\frac{R}{4}(w+\theta)\)u^{\xi}(t)\)\right\|^{2}_{L^{2}}dt
\lesssim \epsilon.
\]
By spliting the interval $[a+T_0/2, a+3T_0/4]$ into $T_0 \epsilon^{\frac{1}{4}}$ subintervals of the same length $\epsilon^{-\frac{1}{4}}$, we infer that there exists $t_0 \in [a+T_0/2, a+3T_0/4]$ such that $I_0:=[t_0-\epsilon^{-\frac{1}{4}}, t_0]\subset (a,a+T_0)$ and
\begin{equation}\label{Csit}
\int_{I_0}\sum_{w\in \Z^{3}}
\left\|\chi_{R}\(\cdot-\frac{R}{4}(w+\theta)\)u(t)\right\|^{2}_{L^{2}}
 \left\|\nabla\(\chi_{R}\(\cdot-\frac{R}{4}(w+\theta)\)u^{\xi}(t)\)\right\|^{2}_{L^{2}}dt
\lesssim \epsilon^{\frac{3}{4}}.
\end{equation}
In particular, by the classical Gagliardo-Nirenberg inequality 
\[
\|f\|^{4}_{L^{3}}\lesssim \|f\|^{2}_{L^{2}}\|\nabla f\|^{2}_{L^{2}},
\] 
we obtain
\begin{equation}\label{dir}
\int_{I_0}\sum_{w\in \Z^{3}}
\left\|\chi_{R}\(\cdot-\frac{R}{4}(w+\theta)\)u(t)\right\|^{4}_{L^{3}}\lesssim\epsilon^{\frac{3}{4}}.
\end{equation}
On the other hand, by using the H\"older inequality and the Sobolev embedding, we get
\begin{align}
\sum_{w\in \Z^{3}}
&\left\|\chi_{R}\(\cdot-\frac{R}{4}(w+\theta)\)u(t)\right\|^{2}_{L^{3}} \nonumber\\
&\lesssim 
\sum_{w\in \Z^{3}}\left\|\chi_{R}\(\cdot-\frac{R}{4}(w+\theta)\)u(t)\right\|_{L^{2}}
\left\|\chi_{R}\(\cdot-\frac{R}{4}(w+\theta)\)u(t)\right\|_{L^{6}} \nonumber \\
&\leq \Big( \sum_{\omega \in \Z^3} \Big\| \chi_R\Big(\cdot -\frac{R}{4}(w+\theta)\Big) u(t)\Big\|_{L^2}^2 \Big)^{1/2} \Big( \sum_{w\in \Z^3} \Big\|\chi_R \Big(\cdot-\frac{R}{4}(w+\theta) \Big) u(t)\Big\|^2_{L^6} \Big)^{1/2} \nonumber \\
&\lesssim \|u(t)\|_{L^2} \|u(t)\|_{H^1} \lesssim 1.\label{Oinm}
\end{align}
For the last line above we used the following: by Sobolev,
	\begin{align*}
	\sum_{w \in \Z^3} \Big\|\chi_R &\Big(\cdot-\frac{R}{4}(w+\theta)\Big) u(t)\Big\|^2_{L^6} \\
	&\lesssim \sum_{w \in \Z^3} \Big\|\chi_R \Big(\cdot-\frac{R}{4}(w+\theta)\Big) \nabla u(t)\Big\|^2_{L^2}+ \frac{1}{R^2} \Big\|(\nabla \chi)_R \Big(\cdot-\frac{R}{4}(w+\theta)\Big) u(t)\Big\|^2_{L^2} \\
	&\lesssim \|\nabla u(t)\|^2_{L^2} + \frac{1}{R^2\sigma^2} \|u(t)\|^2_{L^2} \lesssim \|u(t)\|^2_{H^1}
	\end{align*}
	as  $|\nabla\chi| \lesssim \sigma^{-1}$ and $R>R_0 =\epsilon^{-1} =\sigma^{-1}$ (see the end of the proof of Proposition \ref{Imnn}).
It follows from \eqref{dir}, \eqref{Oinm}, and the almost orthogonality that 
\begin{align}\nonumber
\| u  &\|^{3}_{L^{3}_{t,x}(I_0\times \R^{3})}\lesssim \int_{I_0}\sum_{w\in \Z^{3}}
\left\|\chi_{R}\(\cdot-\frac{R}{4}(w+\theta)\)u(t)\right\|^{3}_{L^{3} }\\\nonumber
&\leq \int_{I_0}\(\sum_{w\in \Z^{3}}
\left\|\chi_{R}\(\cdot-\frac{R}{4}(w+\theta)\)u(t)\right\|^{4}_{L^{3} }\)^{\frac{1}{2}}
\(\sum_{w\in \Z^{3}}
\left\|\chi_{R}\(\cdot-\frac{R}{4}(w+\theta)\)u(t)\right\|^{4}_{L^{2} }\)^{\frac{1}{2}}\\\nonumber
&\leq \(\int_{I_{0}}\sum_{w\in \Z^{3}}
\left\|\chi_{R}\(\cdot-\frac{R}{4}(w+\theta)\)u(t)\right\|^{4}_{L^{3} }\)^{\frac{1}{2}}
\(\int_{I_{0}}\sum_{w\in \Z^{3}}
\left\|\chi_{R}\(\cdot-\frac{R}{4}(w+\theta)\)u(t)\right\|^{4}_{L^{2}}\)^{\frac{1}{2}}\\\label{Nmbs}
&\lesssim \epsilon^{\frac{1}{4}}.
\end{align}
On the other hand, by Strichartz estimates, Sobolev embedding and standard continuity argument, we deduce that
\[ 
\|u\|_{L^{10}_{t,x}(I_{0}\times \R^{3})} \lesssim \left\langle I_{0}\right\rangle^{\frac{1}{10}}. 
\]
This inequality, \eqref{Nmbs}, and interpolation imply that
\[ 
\|u\|_{L^{5}_{t,x}(I_{0}\times \R^{3})}\lesssim  
\|u\|^{\frac{3}{7}}_{L^{3}_{t,x}(I_{0}\times \R^{3})}  
\|u\|^{\frac{4}{7}}_{L^{10}_{t,x}(I_{0}\times \R^{3})} \lesssim \epsilon^{\frac{3}{140}}.
\]
Similarly, we have
\[ 
\|v\|_{L^{5}_{t,x}(I_{0}\times \R^{3})}\lesssim \epsilon^{\frac{3}{140}}.
\]
Therefore, \eqref{scat-crit-non-rad-app} holds, and the proof is complete.
\end{proof}

\begin{proof}[{Proof of Theorem \ref{Th1} for radial solutions}]
We fix $\epsilon>0$ and $R$ as in Proposition \ref{prop-scat-crit-rad}. From \eqref{Mirs} and the mean value theorem, we infer that there exist sequences of times $t_n\rightarrow\infty$
and radii $R_n\rightarrow\infty$ such that
\begin{align} \label{est-n}
\lim_{n \to \infty}\int_{|x|\leq {R_{n}}} \(|u(t,x)|^{\frac{10}{3}}+|v(t,x)|^{\frac{10}{3}}\)dx=0.
\end{align}
Choosing $n$ sufficiently large so that $R_{n}\geq R$, the H\"older inequality yields
\begin{align*}
\int_{|x|\leq R} \(|u(t,x)|^{2}+3\gamma|v(t,x)|^{2} \) dx\lesssim R^{\frac{3}{5}} \left[\(\int_{|x|\leq R_{n}}|u(t,x)|^{\frac{10}{3}}dx \)^{\frac{3}{5}}
+\(\int_{|x|\leq R_{n}}|v(t,x)|^{\frac{10}{3}}dx \)^{\frac{3}{5}}\right]
\end{align*}
which, by \eqref{est-n}, shows \eqref{Taos}. By Proposition \ref{prop-scat-crit-rad}, the solution scatters forward in time.
\end{proof}

\subsection{Proof of the blow-up results}

It remains to prove the blow-up results as stated  in Theorem \ref{theo-blow}. Let us start with the following observation.

\begin{lemma} \label{lem-nega-G}
	Let $\mu, \gamma>0$, and $(\phi,\psi) \in \Gc(0, 3\gamma, \gamma)$. Let $(u_0,v_0) \in H^1 \times H^1$ satisfy either $E_\mu(u_0,v_0)<0$ or if $E_\mu(u_0,v_0) \geq 0$, we assume that \eqref{cond-ener} and \eqref{cond-blow} hold. Let $(u,v)$ be the corresponding solution to \eqref{SNLS} with initial data $(u_0,v_0)$ defined on the maximal time interval $(-T_-, T_+)$. Then for $\vareps>0$ sufficiently small, there exists $c=c(\vareps)>0$ such that 
	\begin{align} \label{est-G}
	G(u(t),v(t)) + \vareps K(u(t),v(t)) \leq -c
	\end{align}
	for all $t\in (-T_-, T_+)$.
\end{lemma}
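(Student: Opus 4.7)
The plan is to rewrite $G + \varepsilon K$ in a form where the conserved energy can be substituted and the remainder is manifestly coercive. Expanding $P = \frac{1}{2}(K + M_\mu) - E_\mu$ in $G = K - 3P$ gives the pointwise identity
\[
G(f,g) + \varepsilon K(f,g) = -\left(\tfrac{1}{2} - \varepsilon\right) K(f,g) - \tfrac{3}{2} M_\mu(f,g) + 3 E_\mu(f,g),
\]
valid for every $(f,g) \in H^1 \times H^1$. Applied to $(u(t),v(t))$ together with energy conservation, and restricting to $\varepsilon \in (0, 1/2)$, the claim reduces to a uniform lower bound on $K(u(t),v(t))$ strictly dominating $6 E_\mu(u_0,v_0)/(1-2\varepsilon)$, since the term $-\tfrac{3}{2} M_\mu$ may be discarded thanks to $M_\mu \geq 0$.

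In the negative-energy case $E_\mu(u_0,v_0) < 0$ the identity immediately yields $G(u(t),v(t)) + \varepsilon K(u(t),v(t)) \leq 3 E_\mu(u_0,v_0) < 0$ for every $\varepsilon \in (0, 1/2]$, so the claim holds with $c = -3 E_\mu(u_0,v_0)$ independent of $\varepsilon$. This is the coupled-system analogue of Glassey's classical observation.

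In the non-negative energy case, assuming \eqref{cond-ener} and \eqref{cond-blow}, the plan is to propagate the supercritical bound
\[
y(t) := \frac{K(u(t),v(t))\, M_{3\gamma}(u(t),v(t))}{K(\phi,\psi)\, M_{3\gamma}(\phi,\psi)} \geq 1 + \delta_0
\]
for all $t \in (-T_-, T_+)$ and some $\delta_0 > 0$. Multiplying the preceding identity by the conserved mass $M_{3\gamma}(u_0,v_0)$, applying the sharp Gagliardo-Nirenberg inequality \eqref{GN-ineq} with optimal constant \eqref{opti-cons} to $(u(t),v(t))$, and discarding $M_\mu \geq 0$, I obtain the pointwise control
\[
\tfrac{1}{2} y(t) - \tfrac{1}{3} y(t)^{3/2} \leq \frac{E_\mu(u_0,v_0)\, M_{3\gamma}(u_0,v_0)}{K(\phi,\psi)\, M_{3\gamma}(\phi,\psi)} < \tfrac{1}{6},
\]
where the last inequality is \eqref{cond-ener} rewritten via the Pohozaev identities \eqref{poho-iden}. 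Since $y \mapsto \tfrac{1}{2} y - \tfrac{1}{3} y^{3/2}$ attains its global maximum $1/6$ only at $y = 1$, continuity of $t \mapsto y(t)$ together with $y(0) > 1$ from \eqref{cond-blow} forces $y(t) \geq 1 + \delta_0$ by a standard bootstrap. Feeding this lower bound back into the identity and choosing $\varepsilon$ small enough so that $\varepsilon(1+\delta_0) < \delta_0/2$ produces the desired strictly negative upper bound.

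The main subtlety lies precisely in the propagation of $y(t) \geq 1 + \delta_0$ when $\mu \neq 3\gamma$: condition \eqref{cond-ener} involves $E_\mu$, whereas the Gagliardo-Nirenberg constant \eqref{opti-cons} is naturally tied to $M_{3\gamma}$. The reconciliation is provided by the trivial bound $M_\mu \geq 0$, which absorbs the discrepancy between $M_\mu$ and $M_{3\gamma}$ without any compatibility hypothesis between $\mu$ and $\gamma$. This is what makes the conclusion valid in the full generality $\mu, \gamma > 0$ claimed in the theorem, extending the mass-resonant picture treated in \cite{OP}.
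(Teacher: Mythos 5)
Your proposal is correct and follows essentially the same route as the paper: the same identity $G+\vareps K=-(\tfrac12-\vareps)K-\tfrac32 M_\mu+3E_\mu$, the same treatment of the negative-energy case, and the same coercivity mechanism in the nonnegative-energy case, namely propagating $K(u(t),v(t))M_{3\gamma}(u(t),v(t))\geq(1+\delta)K(\phi,\psi)M_{3\gamma}(\phi,\psi)$ via the sharp Gagliardo--Nirenberg inequality, the Pohozaev identities, and a continuity/bootstrap argument, then choosing $\vareps$ small. The only difference is presentational: the paper delegates the propagation of the supercritical bound to the proof of \cite[Theorem 4.6]{OP}, whereas you carry out that (standard) bootstrap explicitly with the function $y\mapsto\tfrac12 y-\tfrac13 y^{3/2}$.
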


\begin{proof}
	If $E_\mu(u_0,v_0)<0$, then the conservation of energy implies that
	\begin{align*}
	G(u(t),v(t)) + \frac{1}{2} K(u(t),v(t)) &= 3 E_\mu(u(t),v(t)) - \frac{3}{2} M_\mu(u(t),v(t))  \\
	&\leq 3 E_\mu(u(t),v(t)) = 3 E_\mu(u_0,v_0).
	\end{align*}
	This shows \eqref{est-G} with $\vareps =\frac{1}{2}$ and $c=-3E_\mu(u_0, v_0)>0$. 
	
	We next consider the case $E_\mu(u_0,v_0)\geq 0$. In this case, we assume \eqref{cond-ener} and \eqref{cond-blow}. 
	By the same argument as in the proof of \cite[Theorem 4.6]{OP} using \eqref{cond-ener} and \eqref{cond-blow}, we have
	\[
	K(u(t),v(t)) M_{3\gamma}(u(t),v(t)) > K(\phi,\psi) M_{3\gamma}(\phi,\psi), \quad \forall t\in (-T_-,T_+).
	\]
	Moreover, by taking $\rho=\rho(u_0,v_0,\phi,\psi)>0$ such that
	\begin{align} \label{defi-rho}
	E_\mu(u_0,v_0) M_{3\gamma}(u_0,v_0) \leq \frac{1}{2}(1-\rho) E_{3\gamma}(\phi, \psi) M_{3\gamma}(\phi,\psi),
	\end{align}
	we can prove (see again the proof of \cite[Theorem 4.6]{OP}) the existence of $\delta = \delta(u_0,v_0,\phi,\psi)>0$ such that
	\begin{align} \label{est-solu-blow}
	K(u(t),v(t)) M_{3\gamma}(u(t),v(t)) \geq (1+\delta) K(\phi,\psi) M_{3\gamma}(\phi,\psi), \quad \forall t\in (-T_-,T_+).
	\end{align}
	Now for $\vareps>0$ small to be chosen later, we have from \eqref{defi-rho}, \eqref{est-solu-blow}, and \eqref{poho-iden} that
	\begin{align*}
	\Big( G(u(t),v(t)) &+ \vareps K(u(t),v(t)) \Big) M_{3\gamma}(u(t),v(t)) \\
	&= \Big( 3 E_\mu(u(t),v(t)) - \frac{3}{2} M_\mu(u(t),v(t)) -\Big(\frac{1}{2}-\vareps\Big) K(u(t),v(t)) \Big) M_{3\gamma}(u(t),v(t)) \\
	&\leq 3 E_\mu(u(t),v(t)) M_{3\gamma}(u(t),v(t)) - \Big(\frac{1}{2}-\vareps\Big) K(u(t),v(t)) M_{3\gamma}(u(t),v(t)) \\
	&= \frac{3}{2}(1-\rho) E_{3\gamma}(\phi,\psi) M_{3\gamma}(\phi,\psi) - \Big(\frac{1}{2}-\vareps\Big)(1+\delta) K(\phi,\psi) M_{3\gamma}(\phi,\psi) \\
	&=-\Big( \frac{1}{2}(\rho+\delta) - \vareps(1+\delta)\Big) K(\phi,\psi) M_{3\gamma}(\phi,\psi)
	\end{align*}
	for all $t\in (-T_-,T_+)$. By choosing $0<\vareps<\frac{\rho+\delta}{2(1+\delta)}$, the conservation of mass yields
	\[
	G(u(t),v(t)) +\vareps K(u(t),v(t)) \leq -\Big( \frac{1}{2}(\rho+\delta) - \vareps(1+\delta)\Big) K(\phi,\psi) \frac{M_{3\gamma}(\phi,\psi)}{M_{3\gamma}(u_0,v_0)}
	\]
	for all $t \in (-T_-,T_+)$. The proof is complete.
\end{proof}

We are now able to provide a proof of Theorem \ref{theo-blow}. To the best of our knowledge, the strategy of using an ODE argument -- when classical virial estimates based on the second derivative in time of (localized) variance  break down -- goes back to the work \cite{BHL}, where fractional radial NLS is investigated. See instead \cite{DF, IKN-NA} for some blow-up results for quadratic NLS systems.\\
	
\noindent {\it Proof of Theorem \ref{theo-blow}.}
	We only consider the case of radial data, the one for $\Sigma_3$-data is treated in a similar manner using \eqref{viri-est-cyli}. Let $(u_0,v_0) \in H^1\times H^1$ be radially symmetric and satisfy either $E_\mu(u_0,v_0)<0$ or if $E_\mu(u_0,v_0) \geq 0$, we assume that \eqref{cond-ener} and \eqref{cond-blow} hold. Let $(u,v)$ be the corresponding solution to \eqref{SNLS} defined on the maximal time interval $(-T_-,T_+)$. We only show that $T_+<\infty$ since the one for $T_-<\infty$ is similar. Assume by contradiction that $T_+=\infty$. By Lemma \ref{lem-nega-G}, we have for $\vareps>0$ sufficiently small, there exists $c=c(\vareps)>0$ such that
	\begin{align} \label{nega-G-app}
	G(u(t),u(t)) + \vareps K(u(t),v(t)) \leq -c
	\end{align}
	for all $t\in [0,\infty)$. On the other hand, by Lemma \ref{viri-est-rad}, we have for all $t\in [0,\infty)$,
	\begin{align} \label{viri-est-rad-app-1}
	\frac{d}{dt} M_{\varphi_R}(t)\leq 8G(u(t),v(t)) + CR^{-2} K(u(t),v(t)) + CR^{-2},
	\end{align}
	where $\varphi_R$ is as in \eqref{defi-varphi-R} and $M_{\varphi_R}(t)$ is as in \eqref{defi-M-varphi}. It follows from \eqref{nega-G-app} and \eqref{viri-est-rad-app-1} that for all $t\in [0,\infty)$,
	\begin{align*}
	\frac{d}{dt} M_{\varphi_R}(t) \leq -8c - 8\vareps K(u(t),v(t)) +CR^{-2} K(u(t),v(t)) + CR^{-2}.
	\end{align*}
	By choosing $R>1$ sufficiently large, we get
	\begin{align} \label{viri-est-rad-app-2}
	\frac{d}{dt}M_{\varphi_R}(t) \leq -4c -4\vareps K(u(t),v(t))
	\end{align}
	for all $t\in [0,\infty)$. Integrating the above inequality, we see that $M_{\varphi_R}(t) <0$ for all $t\geq t_0$  with some $t_0>0$ sufficiently large. We infer from \eqref{viri-est-rad-app-2} that
	\begin{align} \label{viri-est-rad-app-3}
	M_{\varphi_R}(t) \leq -4\vareps \int_{t_0}^t K(u(s),v(s)) ds
	\end{align}
	for all $t\geq t_0$. On the other hand, by the H\"older's inequality and the conservation of mass, we have
	\begin{align}
	|M_{\varphi_R}(t)| &\leq C \|\nabla \varphi_R\|_{L^\infty} \left( \|\nabla u(t)\|_{L^2} \|u(t)\|_{L^2} + \|\nabla v(t)\|_{L^2} \|v(t)\|_{L^2} \right) \nonumber \\
	&\leq C(\varphi_R, M_{3\gamma}(u_0,v_0)) \sqrt{K(u(t),v(t))}. \label{viri-est-rad-app-4}
	\end{align}
	From \eqref{viri-est-rad-app-3} and \eqref{viri-est-rad-app-4}, we get
	\begin{align} \label{viri-est-rad-app-5}
	M_{\varphi_R}(t) \leq -A \int_{t_0}^t |M_{\varphi_R}(s)|^2 ds
	\end{align}
	for all $t\geq t_0$, where $A=A(\vareps, \varphi_R, M_{3\gamma}(u_0,v_0))>0$. Set 
	\begin{align} \label{viri-est-rad-app-6}
	z(t):= \int_{t_0}^t |M_{\varphi_R}(s)|^2 ds, \quad t\geq t_0.
	\end{align}
	We see that $z(t)$ is  non-decreasing and non-negative. Moreover, 
	\[
	z'(t) = |M_{\varphi_R}(t)|^2 \geq A^2 z^2(t), \quad \forall t\geq t_0.
	\]
	For $t_1>t_0$, we integrate over $[t_1,t]$ to obtain
	\[
	z(t) \geq \frac{z(t_1)}{1-A^2z(t_1)(t-t_1)}, \quad \forall t\geq t_1.
	\]
	This shows that $z(t) \rightarrow +\infty$ as $t \nearrow t^*$, where
	\[
	t^*:= t_1 + \frac{1}{A^2 z(t_1)} >t_1.
	\]
	In particular, we have
	\[
	M_{\varphi_R}(t) \leq -Az(t) \rightarrow -\infty
	\]
	as $t\nearrow t^*$, hence $K(u(t), v(t)) \rightarrow +\infty$ as $t\nearrow t^*$. Thus the solution cannot exist for all time $t\geq 0$. The proof is complete.
	\hfill $\Box$

\appendix	
\section{Proofs of Lemmas \ref{lem-smal-scat}, \ref{lem-refi-GN-ineq}, \ref{L22}, and \ref{lem-coer-2}}\label{sec:app:A}
Let $I \subset \R$ be an interval containing zero. We recall that a pair of functions $(u,v)\in C(I, H^1(\R^3)) \times C(I,H^1(\R^3))$ is called a solution to the problem \eqref{SNLS} if $(u,v)$ satisfies the Duhamel formula
\[  
(u(t),v(t))=(\Sc_{1}(t)u_{0},\Sc_{2}(t)v_{0})+i\int^{t}_{0}(\Sc_{1}(t-s)F_{1}(s),\Sc_{2}(t-s)F_{2}(s))ds
\]
for all $t\in I$, where 
\begin{align} \label{F1F2}
\begin{aligned}
	F_{1}(s)&:=\(\frac{1}{9}|u(s)|^{2}+2|v(s)|^{2}\)u(s)+\frac{1}{3}\overline{u}^{2}(s)v(s),\\
	F_{2}(s)&:=\(9|v(s)|^{2}+2|u(s)|^{2}\)v(s)+\frac{1}{9}u^{3}(s).
\end{aligned}
\end{align}
The linear operators $\Sc_1$ and $\Sc_2$ introduced in \eqref{def:propagators} satisfy the following dispersive estimates: for $j=1,2$, and $2\leq r \leq \infty$,
\begin{equation}\label{Dpe}
\|\Sc_{j}(t) f\|_{L^r(\R^{3})}\lesssim |t|^{-\(\frac{3}{2}-\frac{3}{r}\)} \|f\|_{L^{r'}(\R^3)}, \quad f \in L^{r'}(\R^3)
\end{equation}
for all $t\ne 0$, which in turn yield the following Strichartz estimates: for any interval $I\subset \R$ and any Strichartz $L^2$-admissible pairs $(q,r)$ and $(m, n),$ i.e., pairs of real numbers satisfying
\begin{align} \label{Sch-adm}
\frac{2}{q}+\frac{3}{r}=\frac{3}{2}, \quad 2\leq r \leq 6.
\end{align} 
we have, for $j=1,2$,
\begin{align*}
	\| \Sc_{j}(t)f  \|_{L_{t}^{q}L^{r}_{x}(I\times\R^{3})}&\lesssim \|f\|_{L^2(\R^{3})}, \quad f \in L^2(\R^3),\\
	\left\| \int^{t}_{0} \Sc_{j}(t-s) F(s) ds \right\|_{L_{t}^{q}L^{r}_{x}(I\times\R^{3})}
	&\lesssim \| F\|_{L_{t}^{m'}L^{n'}_{x}(I\times\R^{3})}, \quad F \in L^{m'}_t L^{n'}_x(I \times \R^3),
\end{align*}
where $(m,m')$ and $(n,n')$ are H\"older conjugate pairs. We refer the readers to the boos \cite{Cazenave, LP, Tao} for a general treatment of the Strichartz estimates for NLS equations.\\

We are ready to prove Lemma \ref{lem-smal-scat}.
\begin{proof}[Proof of Lemma \ref{lem-smal-scat}]
From the Duhamel formula, we have
\[  
(u(t),v(t))=(\Sc_{1}(t-T)u(T),\Sc_{2}(t-T)v(T))+i\int^{t}_{T}(\Sc_{1}(t-s)F_{1}(s),\Sc_{2}(t-s)F_{2}(s))ds.
\]
By using Sobolev embedding, Strichartz estimates, and interpolation, we get
\begin{align*}
\| (u,v)  \|_{L_{t}^{4}L^{6}_{x}\times L_{t}^{4}L^{6}_{x}([T,\infty)\times \R^{3})} &\leq \| (\Sc_{1}(t-T)u(T),\Sc_{2}(t-T)v(T))  \|_{L_{t}^{4}L^{6}_{x} \times L_{t}^{4}L^{6}_{x}([T,\infty)\times \R^{3})} \\
& + C \|(F_1, F_2)\|_{L^2_t W^{1,\frac{6}{5}}_x \times L^2_t W^{1,\frac{6}{5}}_x([T,\infty)\times \R^{3})} \\
& \leq \| (\Sc_{1}(t-T)u(T),\Sc_{2}(t-T)v(T)) \|_{L_{t}^{4}L^{6}_{x}\times L_{t}^{4}L^{6}_{x}([T,\infty)\times \R^{3})} \\
& + C\| (u,v)  \|^{2}_{L_{t}^{4}L^{6}_{x}\times L_{t}^{4}L^{6}_{x}([T,\infty)\times \R^{3})}\|(u,v)\|_{L_{t}^{\infty}L^3_{x} \times L^\infty_t L^3_x([T,\infty)\times \R^{3})} \\
&\leq  \| (\Sc_{1}(t-T)u(T),\Sc_{2}(t-T)v(T))  \|_{L_{t}^{4}L^{6}_{x}\times L_{t}^{4}L^{6}_{x}([T,\infty)\times \R^{3})}\\
&+E\| (u,v)  \|^{2}_{L_{t}^{4}L^{6}_{x}\times L_{t}^{4}L^{6}_{x}([T,\infty)\times \R^{3})},
\end{align*}

Choosing $\epsilon_{\sd}=\epsilon_{\sd}(E)>0$ small enough, the standard continuity argument implies that if \eqref{Small-sc} holds, then
\[
\| (u,v)  \|_{L_{t}^{4}L^{6}_{x}\times L_{t}^{4}L^{6}_{x}([T,\infty)\times \R^{3})}\lesssim \epsilon_{\sd}.
\]
Now, for $0<\tau<t$, we have
\begin{align*}
\|(\Sc_{1}(t)u(t),\Sc_{2}(t)v(t))&-(\Sc_{1}(\tau)u(t),\Sc_{2}(\tau)v(\tau)) \|_{H^{1} \times H^1 }\\
&= \left\| \int^{t}_{\tau}(\Sc_{1}(-s)F_{1}(s),\Sc_{2}(-s)F_{2}(s))ds   \right\|_{H^{1}\times H^1}\\
&\lesssim \| (u,v)  \|^{2}_{L_{t}^{4}L^{6}_{x}\times L_{t}^{4}L^{6}_{x}([\tau,t]\times \R^{3})}\|(u,v)\|_{L_{t}^{\infty}H^{1}_{x}\times L^\infty_t H^1_x([\tau,t]\times \R^{3})} \rightarrow 0
\end{align*}
as $\tau$, $t\to\infty$. Therefore, $\left\{(\Sc_{1}(t)u(t),\Sc_{2}(t)v(t)) \right\}_{t\to \infty}$ is a Cauchy sequence in 
$H^{1} \times H^{1} $. In particular, the solution $(u,v)$ scatters in the positive time.
\end{proof}
In the following, we provide the proofs for  Lemmas \ref{lem-refi-GN-ineq}, \ref{L22}, and \ref{lem-coer-2}.
\begin{proof}[Proof of Lemma \ref{lem-refi-GN-ineq}]
By the sharp Gagliardo-Nirenberg inequality \eqref{GN-ineq}, $K(|f|,|g|) \leq K(f,g)$, and \eqref{opti-cons}, we get
\[
P(|f|,|g|)\leq \frac{1}{3}\(\frac{K(f,g) M_{3\gamma}(f,g)}{K(\phi,\psi) M_{3\gamma}(\phi,\psi)}\)^{\frac{1}{2}} K(f,g).
\]
Thus
\begin{align*}
P(|f|,|g|)&\leq \frac{1}{3}\inf_{\xi_{1},\xi_{2}\in \R^{3}}\(
\(\frac{K(e^{ix\cdot\xi_{1}}f,e^{ix\cdot\xi_{2}}g) M_{3\gamma}(f,g) }{K(\phi,\psi) M_{3\gamma}(\phi,\psi)}\)^{\frac{1}{2}} 
K(e^{ix\cdot\xi_{1}}f,e^{ix\cdot\xi_{2}}g)\)\\
&\leq\frac{1}{3}\inf_{\xi_{1},\xi_{2}\in \R^{3}}
\(\frac{K(e^{ix\cdot\xi_{1}}f,e^{ix\cdot\xi_{2}}g) M_{3\gamma}(f,g)}{K(\phi,\psi) M_{3\gamma}(\phi,\psi)}\)^{\frac{1}{2}} 
\times \inf_{\xi_{1},\xi_{2}\in \R^{3}} K(e^{ix\cdot\xi_{1}}f,e^{ix\cdot\xi_{2}}g),
\end{align*}
which implies \eqref{refi-GN-ineq}.
\end{proof}

\begin{proof}[Proof of Lemma \ref{L22}]
By \eqref{GN-ineq} and $\mu>0$, we have
\begin{align*}
E_\mu(u(t),v(t)) M_{3\gamma}(u(t),v(t)) &\geq \frac{1}{2} K(u(t),v(t)) M_{3\gamma}(u(t),v(t)) - C_{\opt} \( K(u(t),v(t)) M_{3\gamma}(u(t),v(t))\)^{\frac{3}{2}} \\
&=: G\(K(u(t),v(t)) M_{3\gamma}(u(t),v(t))\)
\end{align*}
for all $t\in (-T_-,T_+)$, where $G(\lambda):=\frac{1}{2} \lambda - C_{\opt} \lambda^{\frac{3}{2}}$. Using \eqref{opti-cons}, we compute
\[
G\( K(\phi,\psi) M_{3\gamma}(\phi,\psi)\) = \frac{1}{6} K(\phi,\psi) M_{3\gamma}(\phi,\psi) = \frac{1}{2} E_{3\gamma}(\phi,\psi) M_{3\gamma}(\phi,\psi).
\]
By the conservation of mass and energy, and \eqref{cond-ener}, we have
\begin{align*}
G\( K(u(t),v(t)) M_{3\gamma}(u(t),v(t))\) &\leq E_\mu(u(t),v(t)) M_{3\gamma}(u(t),v(t)) \\
&= E_\mu(u_0,v_0) M_{3\gamma}(u_0,v_0) \\
&< \frac{1}{2} E_{3\gamma}(\phi,\psi) M_{3\gamma}(\phi,\psi) = G\( K(\phi,\psi) M_{3\gamma}(\phi,\psi)\)
\end{align*}
for all $t\in (-T_-,T_+)$. Using this and \eqref{cond-blow}, the continuity argument yields
\begin{align} \label{est-solu-gwp}
K(u(t),v(t)) M_{3\gamma}(u(t),v(t)) < K(\phi,\psi) M_{3\gamma}(\phi,\psi)
\end{align}
for all $t\in (-T_-,T_+)$. The blow-up alternative then implies that $T_-=T_+=\infty$. Next, by \eqref{GN-ineq}, \eqref{opti-cons}, and \eqref{est-solu-gwp}, we have
\[
P(u(t),v(t))\leq \frac{1}{3}
\(\frac{K(u(t),v(t)) M_{3\gamma}(u(t),v(t))}{K(\phi,\psi) M_{3\gamma}(\phi,\psi)}\)^{\frac{1}{2}} 
K(u(t),v(t))\leq\frac{1}{3}K(u(t),v(t))
\]
for all $t\in \R$. It follows that
\begin{equation}\label{est-E}
E_{\mu}(u(t),v(t))=\frac{1}{2}\(K(u(t),v(t))+M_{\mu}(u(t),v(t))\)-P(u(t),v(t))\geq \frac{1}{6}K(u(t),v(t))
\end{equation}
which, by the conservation of energy, implies \eqref{est-K}.

\noindent From \eqref{est-E} and \eqref{opti-cons}, we see that
\begin{align}\label{Ess}
\begin{aligned}
K(u(t),v(t))M_{3\gamma}(u(t),v(t))&\leq 6E_{\mu}(u(t),v(t))M_{3\gamma}(u(t),v(t)) \\
&=6\(\frac{E_{\mu}(u(t),v(t))M_{3\gamma}(u(t),v(t))}{E_{3\gamma}(\phi,\psi) M_{3\gamma}(\phi,\psi)}\) E_{3\gamma}(\phi,\psi) M_{3\gamma}(\phi,\psi)\\
&=
\(\frac{E_{3\gamma}(u(t),v(t))M_{3\gamma}(u(t),v(t))}{\frac{1}{2} E_{3\gamma}(\phi,\psi) M_{3\gamma}(\phi,\psi)}\) K(\phi,\psi) M_{3\gamma}(\phi,\psi)
\end{aligned}
\end{align}
for all $t\in \R$. On the other hand, by \eqref{cond-ener}, there exists $\delta=\delta(u_0,v_0,\phi,\psi)>0$ such that
\[ 
E_{\mu}(u_{0},v_{0}) M_{3\gamma}(u_{0},v_{0}) \leq (1-\delta) \frac{1}{2} E_{3\gamma}(\phi,\psi) M_{3\gamma}(\phi,\psi).
\]
Then from \eqref{Ess} and the conservation laws of mass and energy, we obtain
\[
K(u(t),v(t))M_{3\gamma}(u(t),v(t))\leq (1-\delta)K(\phi,\psi)M_{3\gamma}(\phi,\psi)
\]
for all $t\in \R$. The proof is complete.
\end{proof}

\begin{proof}[Proof of Lemma \ref{lem-coer-2}]
It follows from straightforward calculations that $\|\Gamma_R f\|^2_{L^2 } \leq \|f\|^2_{L^2 }$ and
\[
\int \Gamma^2_R(x) |\nabla f(x)|^{2}dx=\int |\nabla (\Gamma_R(x) f(x))|^{2}dx + \int \Gamma_R(x)\Delta\Gamma_R(x) |f(x)|^{2}dx, \quad f \in H^1.
\]
As $\|\Delta \Gamma_R\|_{L^\infty} \lesssim R^{-2}$, we infer from \eqref{coer-1} and the conservation of mass that there exists a sufficiently large $R=R(\delta, u_0,v_0, \phi,\psi)$ so that 
\[
K\(\Gamma_{R}(\cdot-z)u(t),\Gamma_{R}(\cdot-z)v(t)\) M_{3\gamma}\(\Gamma_{R}(\cdot-z)u(t),\Gamma_{R}(\cdot-z)v(t)\) \leq \(1-\frac{\delta}{2}\)K(\phi,\psi) M_{3\gamma}(\phi,\psi)
\]
for all $t\in \R$. The refined Gagliardo-Nirenberg inequality \eqref{refi-GN-ineq} implies that 
\[
P\(\Gamma_{R}(\cdot-z)|u(t)|,\Gamma_{R}(\cdot-z)|v(t)|\)\leq \frac{1}{3}\(1-\frac{\delta}{2}\)^{\frac{1}{2}} K\(\Gamma_{R}(\cdot-z)e^{ix\cdot\xi_{1}}u(t),\Gamma_{R}(\cdot-z)e^{ix\cdot\xi_{2}}v(t)\)
\]
which in turn implies \eqref{coer-prop} with $\nu:= 1-\(1-\frac{\delta}{2}\)^{\frac{1}{2}}>0$.
\end{proof}

\section{Virial Identities}\label{sec:app:B}
This Appendix is devoted to the proof of the virial identities in Section \ref{sec:VM}.
\begin{proof}[Proof of Lemma \ref{Imporide}]
	Notice that
	\begin{equation}\label{Idf1}
	\partial_{t}(|u|^{2}+\gamma\beta|v|^{2})=2\RE (\overline{u}\partial_{t} u +\gamma\beta\overline{v}\partial_{t}v).
	\end{equation}
	Moreover, multiplying the equation \eqref{SNLS} with $(\overline{u}, \beta\overline{v})$ and taking the imaginary part, we have
	\begin{equation}\label{Idf22}
	\begin{split}
	\RE (\overline{u}\partial_{t} u +\gamma\beta\overline{v}\partial_{t}v)=
	-\IM (\overline{u}\Delta u +\beta\overline{v}\Delta v)-\IM \(\frac{1}{3}\overline{u}^{3}v+\frac{\beta}{9}{u}^{3}\overline{v}\) \\
	=-\IM (\overline{u}\Delta u +\beta\overline{v}\Delta v)+
	\frac{1}{3}\(1-\frac{\beta}{3}\)\IM (u^{3}\overline{v}).
	\end{split}
	\end{equation}
	Combining \eqref{Idf1} and \eqref{Idf22}, we infer that
	\begin{align*}
	\partial_{t}(|u|^{2}+\gamma \beta|v|^{2}) &=
	-2\IM (\overline{u}\Delta u +\beta\overline{v}\Delta v)+
	\frac{2}{3}\(1-\frac{\beta}{3}\)\IM (u^{3}\overline{v})\\
	&=-2\nabla\cdot\IM (\overline{u}\nabla u)	-2\beta\nabla\cdot\IM (\overline{v}\nabla v)+\frac{2}{3}\(1-\frac{\beta}{3}\)\IM (u^{3}\overline{v}),
	\end{align*}
	which implies \eqref{Idg}. On the other hand, we rewrite \eqref{SNLS} as
	\[
	\left\{
	\begin{array}{ccl}
	i\partial_{t} u+\Delta u &=&H,\\
	i\gamma\partial_{t} v+\Delta v &=&G,
	\end{array}
	\right. 
	\]
	where $H=H_1+H_2+H_3$ and $G=G_1+G_2+G_3$ with
	\begin{align*}
	H_{1}&	=u, & H_{2}&=-\(\frac{1}{9}|u|^{2}+2|v|^{2}\)u, & H_{3}&=-\frac{1}{3}\overline{u}^{2}v,\\
	G_{1}&	=\mu v, & G_{2}&=-(9|v|^{2}+2|u|^{2})v, & G_{3}&=-\frac{1}{9}u^{3}.
	\end{align*}
	It follows from straightforward computations that 
	\begin{align} \nonumber
	\partial_{t}\IM (\overline{u} \partial_ku+\gamma\overline{v} \partial_k v)&= 
	\frac{1}{2}\partial_{k}\Delta (|u|^{2}+|v|^{2})-2\partial_{j}\RE(\partial_j\overline{u} \partial_ku+\partial_j\overline{v} \partial_kv)\\
	& + (2\RE (\overline{H}\partial_{k}u)-\partial_{k}\RE(\overline{H} u))+
	(2\RE (\overline{G}\partial_{k}v)-\partial_{k}\RE (\overline{G} v)). \label{iden-prof}
	\end{align}
	A simple calculation leads to
	\[
	(2\RE (\overline{H}_{1}\partial_{k}u)-\partial_{k}\RE (\overline{H}_{1} u))+
	(2\RE(\overline{G}_{1}\partial_{k}v)-\partial_{k}\RE (\overline{G}_{1} v))=0.
	\]
	Moreover, since
	\begin{align*}
	\partial_{k}(|u|^{2}|v|^{2})&=2\RE(\overline{u}\partial_{k} u)|v|^{2}+2\RE(\overline{v}\partial_{k} v)|u|^{2}\\
	\partial_{k}(|u|^{4})&=4|u|^{4}\RE(\overline{u}\partial_{k} u), \quad 
	\partial_{k}(|v|^{4})=4|v|^{4}\RE (\overline{v}\partial_{k} v),	
	\end{align*}
	we obtain that
	\[
	(2\RE(\overline{H}_{2}\partial_{k}u)-\partial_{k}\RE(\overline{H}_{2} u))+
	(2\RE (\overline{G}_{2}\partial_{k}v)-\partial_{k}\RE(\overline{G}_{2} v))=
	\frac{1}{18}|u|^{4}+\frac{9}{2}|v|^{4} +2|u|^{2}|v|^{2}.
	\]
	Finally, as
	\[
	\partial_{k}\RE(\overline{u}^{3} v)=3\RE (\overline{u}^{2}v \partial_{k}\overline{u})+\RE(\overline{u}^{3} \partial_{k}v),
	\]
	it follows that
	\[
	(2\RE(\overline{H}_{3}\partial_{k}u)-\partial_{k}\RE(\overline{H}_{3} u))+
	(2\RE (\overline{G}_{3}\partial_{k}v)-\partial_{k}\RE(\overline{G}_{3} v))=
	\frac{2}{9}\partial_{k}\RE(\overline{u}^{3} v).
	\]
	Collecting the above identities, we obtain
	\begin{align*}
	(2\RE (\overline{H}\partial_{k}u)-\partial_{k}\RE(\overline{H} u))+
	(2\RE (\overline{G}\partial_{k}v)-\partial_{k}\RE (\overline{G} v))=
	2\partial_{k}N(u,v),
	\end{align*}
	which, together with \eqref{iden-prof}, shows \eqref{Idn}. The proof is complete.
\end{proof}

\begin{proof}[Proof of Corollary \ref{rem-viri-iden}]
The proof of the identity \eqref{eq:variance} is straightforward. The relation \eqref{cor:ii} comes from the fact that
		\[
		\partial_j = \frac{x_j}{r} \partial_r, \quad \partial^2_{jk} = \left( \frac{\delta_{jk}}{r} - \frac{x_jx_k}{r^3} \right) \partial_r + \frac{x_j x_k}{r^2} \partial^2_r,
		\]
for radial function. Hence 
		\begin{align*}
		\rea \int \partial^2_{jk} \varphi(x) \partial_j \overline{u} (t,x) \partial_k u(t,x) dx 
		= \int  \frac{\varphi'(r)}{r} |\nabla u(t,x)|^2 dx + \int  \left(\frac{\varphi''(r)}{r^2}-\frac{\varphi'(r)}{r^3}\right) |x \cdot \nabla u(t,x)|^2 dx, 
		\end{align*}
		where $r=|x|$, which in turn implies  \eqref{cor:iii}.

If $\varphi$ is radial and $(u,v)$ as well, 
		\begin{align*}
		\frac{d}{dt} \Mcal_\varphi(t) &= -\int \Delta^2 \varphi(x) (|u|^2 + |v|^2)(t,x) dx  + 4 \int \varphi''(r) (|\nabla u|^2 + |\nabla v|^2)(t,x) dx \\
		& -4\int  \Delta \varphi(x) N(u,v)(t,x)dx. 
		\end{align*}

From the choice of the function $\varphi(x) = \psi(y) + z^2,$ we have
		\begin{align*}
		\frac{d}{dt} \Mcal_\varphi(t) &= -\int\Delta^2_y \psi(y) (|u|^2 +  |v|^2)(t,x) dx + 4 \rea \int\partial^2_{jk} \psi(y) (\partial_j \overline{u}\partial_k u + \partial_j \overline{v} \partial_k v)(t,x) dx \\
		& + 8\left(\|\partial_z u(t)\|^2_{L^2} + \|\partial_z v(t)\|^2_{L^2}\right) - 8 P(u(t),v(t))  -4\int  \Delta_y \psi(y) N(u,v)(t,x)dx	 		
		\end{align*}
which in turn gives \eqref{cor:iv}.
\end{proof}

%\section*{Acknowledgement}
%This work was supported in part by the Labex CEMPI (ANR-11-LABX-0007-01). V. D. D. would like to express his deep gratitude to his wife - Uyen Cong for her encouragement and support. The authors would like to thank the reviewers for their helpful comments and suggestions. 

\begin{bibdiv}
\begin{biblist}
			
\bib{AP}{article}{
 author={Angulo, J. P.},
 author={Pastor, A. F.},
 title={Stability of periodic optical solitons for a nonlinear Schrodinger system},
 journal={Proc. Roy. Soc. Edinburgh Sect. A},
 volume={139},
 number={5},
 pages={927},
 year={2009},
 publisher={Cambridge University Press}
}

\bib{BF20}{article}{
author={Bellazzini, {J.}},
author={Forcella, {L.}},
title={Dynamical collapse of cylindrical symmetric dipolar Bose-Einstein condensates},
journal={preprint},
eprint={https://arxiv.org/abs/2005.02894}, 
}

\bib{BFG20}{article}{
author={Bellazzini, {J.}},
author={Forcella, {L.}},
author={Georgiev, {V.}},
title={Ground state energy threshold and blow-up for NLS with competing nonlinearities},
journal={preprint},
eprint={https://arxiv.org/abs/2012.10977}, 
}

\bib{BHL}{article}{
author={Boulenger, T.},
author={Himmelsbach, D.},
author={Lenzmann, E.},
title={Blowup for fractional NLS},
journal={J. Funct. Anal.},
volume={271},
date={2016},
number={9},
pages={2569--2603},
issn={0022-1236},
%review={\MR{3545225}},
%doi={10.1016/j.jfa.2016.08.011},
}

\bib{Boy}{book}{
author={Boyd, R. W.},
title={Nonlinear optics},
edition={3},
publisher={Elsevier/Academic Press, Amsterdam},
date={2008},
pages={xx+613},
isbn={978-0-12-369470-6},
%review={\MR{2475397}},
}

\bib{BDST}{article}{
 author={Buryak, A. V.},
 author={Di Trapani, P.},
 author={Skryabin, D. V.},
 author={Trillo, S.},
 title={Optical solitons due to quadratic nonlinearities: from basic physics to futuristic applications},
 journal={Phys. Rep.},
 volume={370},
 number={2},
 pages={63--235},
 year={2002},
 publisher={Elsevier}
}

\bib{B99}{article}{
  title={Solitons and collapse suppression due to parametric interaction in bulk Kerr media},
  author={A. V. Buryak and V. Steblina and R. Sammut},
  journal={Optics letters},
  year={1999},
  volume={24},
  pages={1859--1861}
}

\bib{Cazenave}{book}{
author={Cazenave, T.},
title={Semilinear Schr\"{o}dinger equations},
series={Courant Lecture Notes in Mathematics},
volume={10},
publisher={New York University, Courant Institute of Mathematical Sciences, New York; American Mathematical Society, Providence, RI},
date={2003},
pages={xiv+323},
isbn={0-8218-3399-5},
%review={\MR{2002047}},
%doi={10.1090/cln/010},
}

\bib{CO}{article}{
author={Cho, Y.},
author={Ozawa, T.},
title={Sobolev inequalities with symmetry},
journal={Commun. Contemp. Math.},
volume={11},
date={2009},
number={3},
pages={355--365},
issn={0219-1997},
%review={\MR{2538202}},
%doi={10.1142/S0219199
}
			
\bib{CdMS}{article}{
author={Colin, M.},
author={Di Menza, L.},
author={Saut, J. C.},
title={Solitons in quadratic media},
journal={Nonlinearity},
volume={29},
date={2016},
number={3},
pages={1000--1035},
issn={0951-7715},
%review={\MR{3465991}},
%doi={10.1088/0951-7715/29/3/1000},
}

\bib{DF}{article}{
	author={Dinh, V. D.},
	author={Forcella, L.},
	title={Blow-up results for systems of nonlinear Schr\"odinger equations with quadratic interaction},
	journal={preprint},
	eprint={https://arxiv.org/abs/2010.14595},
}

\bib{DM-PAMS}{article}{
author={Dodson, B. },
author={Murphy, J.},
year = {2017},
month = {11},
pages = {4859--4867},
title = {A new proof of scattering below the ground state for the 3D radial focusing cubic NLS},
volume = {145},
journal = {Proc. Amer. Math. Soc.},
%doi = {10.1090/proc/13678}
}

\bib{DM-MRL}{article}{
author={Dodson, B.},
author={Murphy, J.},
title={A new proof of scattering below the ground state for the non-radial focusing NLS},
journal={Math. Res. Lett.},
volume={25},
date={2018},
number={6},
pages={1805--1825},
issn={1073-2780},
%review={\MR{3934845}},
%doi={10.4310/MRL.2018.v25.n6.a5},
}

\bib{DHR}{article}{
   author={Duyckaerts, T.},
   author={Holmer, J.},
   author={Roudenko, S.},
   title={Scattering for the non-radial 3D cubic nonlinear Schr\"odinger
   equation},
   journal={Math. Res. Lett.},
   volume={15},
   date={2008},
   number={6},
   pages={1233--1250},
   issn={1073-2780},
   %review={\MR{2470397}},
   %doi={10.4310/MRL.2008.v15.n6.a13},
}

\bib{Fib}{book}{
   author={Fibich, G.},
   title={The nonlinear Schr\"{o}dinger equation},
   series={Applied Mathematical Sciences},
   volume={192},
   note={Singular solutions and optical collapse},
   publisher={Springer, Cham},
   date={2015},
   pages={xxxii+862},
   isbn={978-3-319-12747-7},
   isbn={978-3-319-12748-4},
   %review={\MR{3308230}},
  % doi={10.1007/978-3-319-12748-4},
}

\bib{Glassey}{article}{
   author={Glassey, R. T.},
   title={On the blowing up of solutions to the Cauchy problem for nonlinear
   Schr\"odinger equations},
   journal={J. Math. Phys.},
   volume={18},
   date={1977},
   number={9},
   pages={1794--1797},
   issn={0022-2488},
   %review={\MR{0460850}},
   %doi={10.1063/1.523491},
}

\bib{HR}{article}{
   author={Holmer, J.},
   author={Roudenko, S.},
   title={A sharp condition for scattering of the radial 3D cubic nonlinear
   Schr\"odinger equation},
   journal={Comm. Math. Phys.},
   volume={282},
   date={2008},
   number={2},
   pages={435--467},
   issn={0010-3616},
   %review={\MR{2421484}},
   %doi={10.1007/s00220-008-0529-y},
}

\bib{KM}{article}{
   author={Kenig, C. E.},
   author={Merle, F.},
   title={Global well-posedness, scattering and blow-up for the
   energy-critical, focusing, nonlinear Schr\"odinger equation in the radial
   case},
   journal={Invent. Math.},
   volume={166},
   date={2006},
   number={3},
   pages={645--675},
   issn={0020-9910},
   %review={\MR{2257393}},
   %doi={10.1007/s00222-006-0011-4},
}

\bib{Kivshar}{article}{
 author={Kivshar, Y. S.},
 title={Bright and dark spatial solitons in non-Kerr media},
 journal={Opt. Quant. Electron.},
 volume={30},
 number={7-10},
 pages={571--614},
 year={1998},
 publisher={Springer}
}

\bib{Inui1}{article}{
   author={Inui, T.},
   title={Global dynamics of solutions with group invariance for the
   nonlinear Schr\"{o}dinger equation},
   journal={Commun. Pure Appl. Anal.},
   volume={16},
   date={2017},
   number={2},
   pages={557--590},
   issn={1534-0392},
   %review={\MR{3602576}},
   %doi={10.3934/cpaa.2017028},
}
		
\bib{Inui2}{article}{
   author={Inui, {T.}},
   title={Remarks on the global dynamics for solutions with an infinite
   group invariance to the nonlinear Schr\"{o}dinger equation},
   conference={
      title={Harmonic analysis and nonlinear partial differential equations},
   },
   book={
      series={RIMS K\^{o}ky\^{u}roku Bessatsu, B70},
      publisher={Res. Inst. Math. Sci. (RIMS), Kyoto},
   },
   date={2018},
   pages={1--32},
   %review={\MR{3888580}},
}

\bib{IKN-NA}{article}{
author={Inui, {T.}},
author={Kishimoto, {N.}},
author={Nishimura, {K.}},
title={Blow-up of the radially symmetric solutions for the quadratic nonlinear Schr\"{o}dinger system without mass-resonance},
journal={Nonlinear Anal.},
volume={198},
date={2020},
pages={111895, 10},
issn={0362-546X},
%review={\MR{4090442}},
%doi={10.1016/j.na.2020.111895},
}

\bib{LP}{book}{
   author={Linares, F.},
   author={Ponce, G.},
   title={Introduction to nonlinear dispersive equations},
   series={Universitext},
   edition={2},
   publisher={Springer, New York},
   date={2015},
   pages={xiv+301},
   isbn={978-1-4939-2180-5},
   isbn={978-1-4939-2181-2},
   %review={\MR{3308874}},
   %doi={10.1007/978-1-4939-2181-2},
}

\bib{LGT}{article}{
 title={On existence of solitons for the 3rd harmonic of a light beam in planar waveguides},
 author={Long, V. C.},
 author={Goldstein, P.},
 author={Trippenbach, M.},
 journal={Acta Phys. Polo. A},
 volume={5},
 number={105},
 pages={437--444},
 year={2004}
}

\bib{Mar}{article}{
author={Martel, Y.},
title={Blow-up for the nonlinear Schr\"{o}dinger equation in nonisotropic spaces},
journal={Nonlinear Anal.},
volume={28},
date={1997},
number={12},
pages={1903--1908},
issn={0362-546X},
%review={\MR{1436360}},
%doi={10.1016/S0362-546X(96)00036-3},
}

\bib{MX}{article}{
author={Meng, F.},
author={Xu, C.},
year = {2020},
month = {01},
title = {Scattering for mass-resonance nonlinear Schr\"odinger system in 5D}
journal={J. Differential Equations},
volume={275},
year={2021},
pages={837--857},
}

\bib{OT}{article}{
   author={Ogawa, T.},
   author={Tsutsumi, Y.},
   title={Blow-up of $H^1$ solution for the nonlinear Schr\"odinger equation},
   journal={J. Differential Equations},
   volume={92},
   date={1991},
   number={2},
   pages={317--330},
   issn={0022-0396},
   %review={\MR{1120908}},
   %doi={10.1016/0022-0396(91)90052-B},
}

\bib{OP}{article}{
	author={Oliveira, F.},
	author={Pastor, A.},
	title={On a Schr{\"o}dinger system arising in nonlinear optics},
	journal={preprint},
	eprint={http://arxiv.org/abs/1810.08231},
}
		
\bib{SBK-OL}{article}{
 author={Sammut, {R.} A.},
 author={Buryak, A. V.},
 author={Kivshar, Y. S.},
 title={Modification of solitary waves by third-harmonic generation},
 journal={Opt. Lett.},
 volume={22},
 number={18},
 pages={1385--1387},
 year={1997},
 publisher={Optical Society of America}
}

\bib{SBK-JOSA}{article}{
 title={Bright and dark solitary waves in the presence of third-harmonic generation},
 author={Sammut, R. A.},
 author={Buryak, A. V.},
 author={Kivshar, Y. S.},
 journal={J. Opt. Soc. Am. B},
 volume={15},
 number={5},
 pages={1488--1496},
 year={1998},
 publisher={Optical Society of America}
}

\bib{SS}{book}{
   author={Sulem, C.},
   author={Sulem, P.-L.},
   title={The nonlinear Schr\"{o}dinger equation},
   series={Applied Mathematical Sciences},
   volume={139},
   note={Self-focusing and wave collapse},
   publisher={Springer-Verlag, New York},
   date={1999},
   pages={xvi+350},
   isbn={0-387-98611-1},
  % review={\MR{1696311}},
}
	
\bib{Tao}{book}{
   author={Tao, T.},
   title={Nonlinear dispersive equations},
   series={CBMS Regional Conference Series in Mathematics},
   volume={106},
   note={Local and global analysis},
   publisher={Published for the Conference Board of the Mathematical
   Sciences, Washington, DC; by the American Mathematical Society,
   Providence, RI},
   date={2006},
   pages={xvi+373},
   isbn={0-8218-4143-2},
  % review={\MR{2233925}},
  % doi={10.1090/cbms/106},
}

\bib{WY}{article}{
author={Wang, H.},
author={Yang, Q.},
title={Scattering for the 5D quadratic NLS system without mass-resonance},
journal={J. Math. Phys.},
volume={60},
date={2019},
number={12},
pages={121508, 23},
issn={0022-2488},
%review={\MR{4043361}},
%doi={10.1063/1.5119293},
}
		
\bib{XX}{article}{
author={Xia, S.},
author={Xu, C.},
year = {2019},
month = {08},
pages = {1--17},
title = {On dynamics of the system of two coupled nonlinear Schr\"odinger in $\R^{3}$},
volume = {42},
journal = {Math. Meth.  Appl. Sci.},
%doi = {10.1002/mma.5814}
}		
		
\bib{XZZ}{article}{
author={Xu, C.},
author={Zhao, T.},
author={Zheng, J.},
title = {Scattering for 3d cubic focusing NLS on the domain outside a convex obstacle revisited}
journal={preprint},
eprint={https://arxiv.org/pdf/1812.09445.pdf},
}		

\bib{ZZS}{article}{
 title={Higher dimensional solitary waves generated by second-harmonic generation in quadratic media},
 author={Zhao, L.},
 author={Zhao, F.},
 author={Shi, J.},
 journal={Cal. Var. Partial Differential Equations},
 volume={54},
 number={3},
 pages={2657--2691},
 year={2015},
 publisher={Springer}
}
		
\end{biblist}
\end{bibdiv}
	
\end{document}